\documentclass[11pt,dvipdfmx]{amsart}
\usepackage[utf8]{inputenc}
\usepackage{amsmath}
\usepackage{amsthm}
\usepackage{amssymb}
\usepackage{mathtools}
\usepackage{enumerate}
\usepackage{mathrsfs}
\usepackage{bm}
\usepackage[dvipdfmx]{graphicx}
\usepackage{graphics,color}
\newtheorem{theorem}{Theorem}[section]
\newtheorem{prop}{Proposition}[section]
\newtheorem{corollary}{Corollary}[theorem]
\newtheorem{lemma}[theorem]{Lemma}

\theoremstyle{remark}

\theoremstyle{definition}
\newtheorem{definition}{Definition}[section]
\newcommand{\R}{\mathbb{R}}
\newcommand{\N}{\mathbb{N}}

\newcommand{\Z}{\mathbb{Z}}
\newcommand{\E}{\mathbb{E}}

\newcommand{\abs}[1]{\left\lvert#1\right\rvert}
\newcommand{\norm}[1]{\left\lVert#1\right\rVert}
\newcommand{\snorm}[1]{\lVert #1 \rVert}
\newcommand{\setmid}{\mathrel{} \middle| \mathrel{}}

\newcommand{\iprod}[1]{\left\langle #1 \right\rangle}

\newcommand{\black}{\color{black}}

\begin{document}

\title[Ergodicity for a Constantin-Lax-Majda-DeGregorio model of turbulent flow]{Ergodicity for a Constantin-Lax-Majda-DeGregorio model of turbulent flow at large viscosity}
\author[Shunsuke Fujita, Reika Fukuizumi, Takashi Sakajo]{
    Shunsuke Fujita\textsuperscript{1},
    Reika Fukuizumi\textsuperscript{2},
    Takashi Sakajo \textsuperscript{3}
}

\keywords{Burgers-type equation, stochastic force, turbulence model, invariant measures}
\maketitle

\subjclass{}

%\tableofcontents

\begin{center} \small
$^{1,2}$ Department of Mathematics, \\
School of Fundamental Science and Engineering, Waseda University,\\
Tokyo 169-0072, Japan; \\
\email{fujimath@fuji.waseda.jp} 
%$^2$ Department of Mathematics, \\
%School of Fundamental Science and Engineering, Waseda University,\\
%Tokyo 169-0072, Japan; \\
$^3$ Department of Mathematics, \\ 
Kyoto University, Kitashirakawa Oiwake-cho,\\
Sakyo-ku, Kyoto 606-8502, Japan
\end{center}

\begin{abstract}
This paper presents a mathematical analysis of a one-dimensional model of 
turbulence based on a stochastic generalized Constantin-Lax-Majda-DeGregorio (gCLMG) equation. 
We focus on the specific case where the nonlinearity in the equation allows the existence of the anomalous enstrophy cascade, which is an inviscid conserved quantity, and some effective energy estimates for mathematical analysis. 
The existence of an invariant measure in the attractor is proved via the classical Krylov-Bogoliubov argument. 
The uniqueness of the measure and exponential mixing are proved under a sufficiently large viscosity condition, in which the nonlocal structure of the nonlinear term plays a prominent role.
The construction of this invariant measure is the first step towards a theoretical understanding of turbulent phenomena that cause anomalous cascades in the zero viscous limit, viewed from the dynamical systems theory.
\end{abstract}

%\maketitle
%%%%%%%%%%%%%%%%%%%%%%%%%%%%%%%%%%%

\section{Introduction}
For a theoretical understanding of turbulence, investigating the statistical properties of various physical quantities for turbulent flows is more effective than examining individual sample flows owing to their stochastic nature.
One of the well-known statistical properties is the spectral law of the flow energy in three-dimensional isotropic turbulence.
Let $\widehat{\bm u}(t,{\bm k})$ denote the Fourier coefficient of wavenumber ${\bm k}$ at time $t$ for a turbulent velocity field ${\bm u}(t,{\bm x})$ on $\mathbb{T}^3=(\mathbb{R}/2\pi\mathbb{Z})^3$ satisfying the periodic boundary condition.
Then the energy density $\mathcal{E}(t,k)$ of magnitude $k=\vert {\bm k} \vert \in [0,\infty)$ is defined by
\begin{equation*}
\mathcal{E}(t,k) \coloneqq \frac{1}{2} \sum_{k=\vert {\bm k}^\prime \vert}\abs{\widehat{{\bm u}}(t,{\bm k}^\prime)}^{2}.
\end{equation*}
The energy dissipation rate is then denoted by $\epsilon := \frac{d}{dt}\int_{0}^{\infty} E(t,k) dk$.
Suppose that turbulent flows are uniform, homogeneous, and isotropic. 
In other words, the statistics of physical quantities of turbulence is independent of time and space variables and depends only on the distance.
Using dimensional analysis, Kolmogorov~\cite{Kolmogorov41,Kolmogorov62} postulated the emergence of an intermediate spectral range, called the inertial range, where the following scaling law holds for a sufficiently small viscosity coefficient $\nu$.
\begin{equation}
E(k) := \iprod{\mathcal{E}(t,k)} \simeq \iprod{\epsilon}^{\frac{2}{3}} k^{-\frac{5}{3}}, \label{K41}
\end{equation}
in which $\iprod{\cdot}$ represents an appropriate ensemble average. 
This is known as the $5/3$ law and it has been confirmed for various three-dimensional turbulent flows~\cite{ANTONIA2006,Takaoka2007}
This law indicates that the energy dissipates anomalously for non-smooth incompressible flows in the zero viscous limit, since the energy is conserved for smooth incompressible flows in the absence of viscosity.
In two-dimensional turbulence, we observe a similar anomalous dissipation of  the $L^{2}$ norm of vorticity (enstrophy), which is the inviscid conserved quantity.
In this case, the dimensional analysis yields the scaling law $E(k) \simeq \eta^{\frac{2}{3}} k^{-3}$ with the enstrophy dissipation rate $\eta$ in the zero-viscous limit, which has been experimentally and numerically validated~\cite{Batchelor1969,Kraichnan1967,Leith1968,Kellay2012}.
The existence of such anomalous dissipation of the inviscid conserved quantity is a remarkable property caused by the non-smoothness of turbulent flows in the zero viscous limit.

In the derivation of the scaling law based on dimensional analysis, no specific governing equation is explicitly assumed.
On the other hand, it is numerically verified that the behavior of such turbulent flows is well produced by the Navier-Stokes equations with high Reynolds numbers, that is, with a sufficiently small viscosity~\cite{Ishihara2009, Kaneda2008, Takaoka2007}.
Hence, it is theoretically important to explain the statistical properties of turbulence in terms of solutions to the Navier-Stokes equations or to the stochastic Navier-Stokes equations with random external forces. 
Now, we consider the hydrodynamic equations for vorticity ${\bm \omega}=\nabla \times {\bm u}$, which are equivalent to the Navier-Stokes equations that describe the motion of incompressible flows.
\begin{equation}
\partial_t {\bm \omega} + ({\bm u} \cdot \nabla){\bm \omega} - ({\bm \omega} \cdot \nabla) {\bm u} = \nu \triangle {\bm \omega} + {\bm f}, \qquad {\bm \omega} = \mathcal{D}({\bm u}), \qquad {\bm \omega}(0, {\bm x}) = {\bm \omega}_0({\bm x}),
\label{vorticity-eq}
\end{equation}
where $\mathcal{D}$ denotes a singular operator derived from the Biot-Savart integral.
The nonlinear terms $({\bm u} \cdot \nabla){\bm \omega}$ and $({\bm \omega} \cdot \nabla) {\bm u}$ on the left-hand side of the first equation are called the vortex advection term and the vortex stretching term, respectively.
The ones on the right are the viscous dissipation term and a (random) external force.
With this vorticity equation, the formation of the inertial range in the $5/3$-law can be qualitatively understood as follows:
In a statistically stationary turbulent state, the energy is injected into the flow by the external force ${\bm f}$ on a large scale and it is then dissipated by the viscosity term $\nu \triangle {\omega}$ on small scales.
In the intermediate inertial range between them, the interaction between the two nonlinear terms transfers the energy from large to small scales at a constant rate.
This explanation using the balance among the terms in equation \eqref{vorticity-eq} seems to be qualitatively understandable.
However, for a deeper theoretical understanding of turbulence, it is necessary to describe its properties in terms of solutions of this equation qualitatively.

To tackle this problem, we will take an approach of the dynamical systems theory:
Based on the assumption that the turbulent state is realized as an attractor of the solutions to fluid equations as time goes infinity, we construct an invariant measure defined on the attractor, with which we investigate its statistical property.
%To this end, we construct the solution to the fluid equation in a certain infinite-dimensional space and consider its infinite time limit.
However, the existence of a global-in-time solution to the three-dimensional Navier-Stokes equations remains a difficult mathematical problem.
There are many more challenges that need to be overcome in studying turbulence using this dynamical systems theory approach.
Hence, to better understand this mathematical difficulty, various simpler turbulence models have been proposed.
Such model equations include the Burgers equation~\cite{BEC2007}, the Constantin-Lax-Majda equation~\cite{Constantin_Lax_Majda_1985}, and the shell model~\cite{Biferale2003} by adding random forcing terms or making the initial conditions random.

Attempts have been made to understand the turbulent state through the construction of an invariant measure for the stochastic Burgers equation subject to a random external force. 
See the reference list in Da Prato-Zabczyk ~\cite{da_prato_zabczyk_1992}, in which the existence and uniqueness of a spatially uniform invariant measure have been investigated.
The upper bound of the decay rate in the inertial range is proportional to the reciprocal value of the viscous coefficient $\nu$.
Boritchev and Kuksin~\cite{Boritchev_Kuksin_2021_onedimensional} mathematically showed that a $k^{-2}$ spectral law holds in the inertial range, which is similar to the Kolmogorov spectral law.
This turbulent flow, called Burgers turbulence, has been studied as nonlinear wave turbulence~\cite{BEC2007,Kuksin2024}.
However, no inviscid conserved quantities relevant to energy and enstrophy exist in the Burgers equation, which is a different mathematical structure from the Navier-Stokes equations and the vorticity equation.

In this paper, as another one-dimensional turbulence model, we consider the generalized Constantin-Lax-Majda-DeGregorio (gCLMG) equation on $\mathbb{S}^1=\mathbb{R}/(2\pi \mathbb{Z})$~\cite{Okamoto_Sakajo_Wunsch_2008}:
\begin{equation}
\omega_{t}+au\omega_x-u_x\omega=0,\qquad u_{x}=\mathcal{H}(\omega), \qquad \omega(0,x)=\omega_0(x), \label{gCMLG-inv}
\end{equation}
in which $a \in \mathbb{R}$ is a parameter and the Hilbert transform $\mathcal{H}$ of a function $\omega \in C^{\infty}(\mathbb{S}^1)$ is given by
\begin{align*}
\mathcal{H}(\omega) = -\mathscr{F}^{-1} i \mathrm{sgn}(k) \mathscr{F} \omega, \quad k \in \mathbb{Z}.
\end{align*}
Here, $\mathscr{F}$ denotes the Fourier transform, and $\mathrm{sgn}\colon \R \to \R$ is the sign function, defined by $\mathrm{sgn}(k) = k/\abs{k}$ for $k\neq 0$ and $0$ for $k=0$.
This is the evolution equation for the function $\omega(t,x)$ that models the vorticity.
Apparently, this equation has two nonlinear terms corresponding to the advection term and the vortex stretching term. 
The quadratic term $u_x\omega=\mathcal{H}(\omega)\omega$ was originally introduced by Constantin, Lax and Majda~\cite{Constantin_Lax_Majda_1985} as a one-dimensional counterpart to the stretching term $({\bm \omega}\cdot \nabla){\bm u}$ of the vorticity equation \eqref{vorticity-eq}.
%And they then investigated how the vortex stretching term affects the well-posedness of equation \eqref{gCMLG-inv} with $a=0$.
They constructed an exact analytic solution of \eqref{gCMLG-inv} that blows up in finite time, revealing that the vortex stretching term is a strong nonlinearity that plays a crucial role in  the existence of the solution.
Later, DeGregorio~\cite{DeGregorio_1990} added the advection term, i.e. $a=1$ in \eqref{gCMLG-inv}, which is more relevant to the vorticity equation, and numerically conjectured the existence of global solutions.

Okamoto et al.~\cite{Okamoto_Sakajo_Wunsch_2008} have reconsidered the balance between the advection term and the stretching term on the existence of global-in-time solutions in a more general framework by introducing a real parameter $a\in \mathbb{R}$.
They have shown the existence of a solution locally in time, belonging to $C^0([0,T];H^1(\mathbb{S}^1)/\mathbb{R})\cap C^1([0,T];L^2(\mathbb{S}^1)/\mathbb{R})$, and provided a sufficient condition for the existence of a global solution of the Beale-Kato-Majda type.
Using the condition, it is numerically conjectured that there exists $0<a_{c}<1$ such that the solution exists globally in time for $a>a_{c}$. 
However, its mathematical proof is still an open problem.
Moreover, in~\cite{Okamoto_Sakajo_Wunsch_2008}, it has been found that the $L^p$-norm of the solution is a conserved quantity with $p=-a$ for $a\leqq -1$.
Although the advection term with a negative $a$ has no clear physical relevance, it acquires a preferable mathematical structure having an inviscid conserved quantity.
Therefore, it is expected that this equation can be used as a turbulent model, where the inviscid conserved quantity dissipates anomalously by adding a viscous dissipation term and an external force term.
\begin{equation}
\omega_{t}+au\omega_x-u_x\omega=\nu \omega_{xx} + f, \qquad u_{x}=\mathcal{H}(\omega), \qquad \omega(0,x)=\omega_0(x).
\label{gCLMG_eq}
\end{equation}
When $a=-2$ in particular, the squared norm of the model vorticity corresponding to enstrophy becomes an inviscid conserved quantity, which could have relevance to 2D turbulence.

%When the external force $f$ is a stochastic process, Matsumoto and Sakajo~\cite{Matsumoto_Sakajo_2016} numerically confirm that the solution attains a statistically stable turbulent state and an inertial range emerges where the energy spectra decay at the rate $k^{-3}$ with a logarithmic correction, as expected by the dimensional analysis.
%Furthermore, such turbulent statistical laws also hold for $-4\leqq a\leqq -1$~\cite{matsumoto2017turbulence}. 
%The generation of turbulence by bifurcation from a stationary solution by increasing the Reynolds number (or decreasing the viscosity) for this numerical solution is one of the mechanisms of turbulence generation, and Jeong-Kim \cite{Jeong_Kim_2020} has proved the existence of stationary solutions and investigated the bifurcation of stationary solutions through numerical calculations.
%Since the equation \eqref{gCLMG_eq} shares the same mathematical structure as the vorticity equation \eqref{vorticity-eq}, it is useful to show that the solution to this equation demonstrates the spectral law and to investigate its statistical properties.
%Also, compared to Burgers turbulence, the nonlinear terms included in the equation are nonlocal, and investigating how this affects the properties of turbulence is useful for understanding three-dimensional turbulence.

 Suppose that the Fourier coefficients $\widehat{f}(k,t)$ of the external force $f(x,t)$ are nonzero only for the wavenumber $k=\pm 1$ and set to be Gaussian with zero mean, $\delta$-correlated-in-time and independent random variables. 
Then Matsumoto and Sakajo~\cite{Matsumoto_Sakajo_2016} numerically confirmed that the solution reaches a statistically steady state.
%（何らかの意味での）平均が一定状態を保って運動している」乱流状態
Figure~\ref{fig:cascade}(a) shows the time average of the energy spectra of the turbulent state for a small viscous coefficient $\nu = \nu_0$, $\nu_0/4$ and $\nu_0/16$ with $\nu_0=2.5 \times 10^{-5}$.
This indicates that the energy spectra decay at a constant rate like $k^{-3}$ with a logarithmic correction, as expected by the dimensional analysis. 
In the inlet of this figure, the long-time average of the enstrophy flux $\Pi_Q(k)$ for the wavenumber $k$ is also shown, in which we observe that the enstrophy flux becomes a positive constant over the inertial range. 
This means that the enstrophy is transferred from lower to higher wavenumbers at a constant rate. 
Furthermore, as the viscosity coefficient decreases, the inertial range extends to high wavenumbers.
This strongly suggests that the enstrophy, which is the inviscid conserved quantity, dissipates anomalously in the zero viscous limit. 
In addition, Figure~\ref{fig:cascade}(b) is the plot of the time evolution (attractor) of the three spectra $(\widehat{\omega}_4(t), \widehat{\omega}_8(t), \widehat{\omega}_{16}(t))$ for the solution $\omega(x,t)$ projected in the three-dimensional phase space. 
The red point shown in the figure corresponds to the time average of the solution projected to the phase space. 
The attractor is shaped like a butterfly and the average function is not located in the center but the left wing. 
This infers that the invariant measure defined on this attractor is asymmetric. 
To mathematically understand the statistical law with anomalous dissipation of enstrophy of the turbulent state, it is important to construct the invariant measure in the attractor for the equation \eqref{gCLMG_eq} and to investigate its properties.

\begin{figure}[htbp]
\begin{center}
\includegraphics[width=15cm]{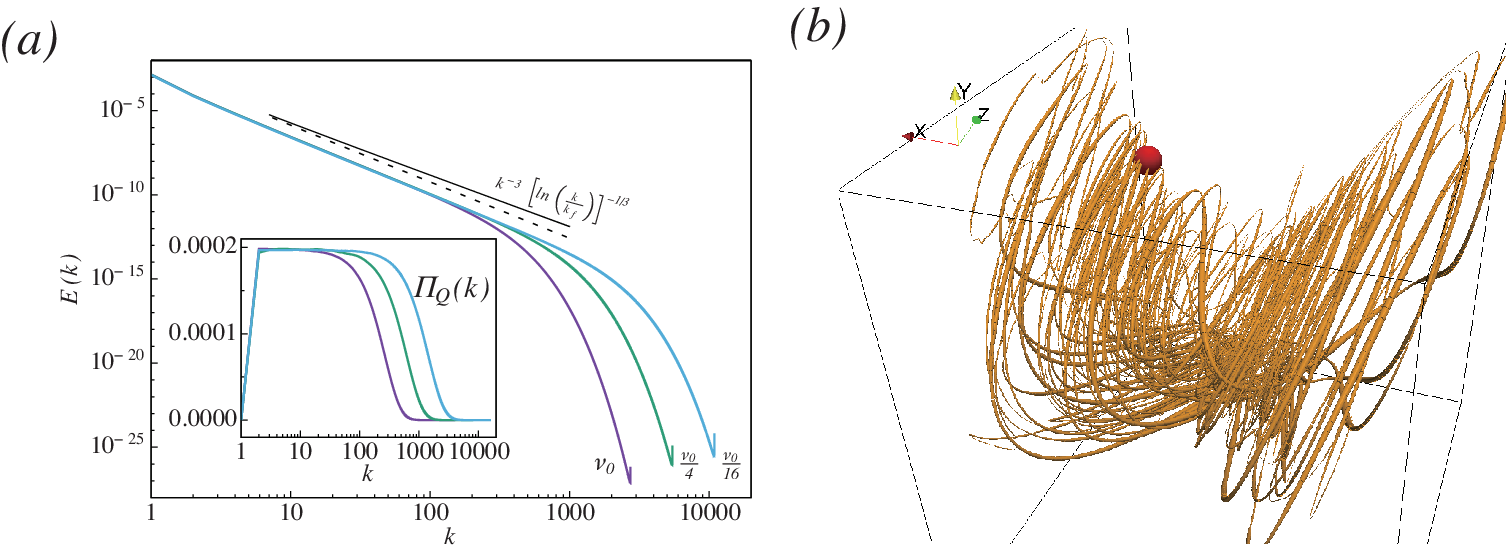}
\caption{Numerical computations of \eqref{gCLMG_eq} for $\nu=\nu_0=2.5\times 10^{-3}$. (a) Time average of the energy spectra $E(k)$ and the enstrophy flux $\Pi_Q(k)$. (b)Time evolution of the three spectra $(\widehat{\omega}_4(t), \widehat{\omega}_8(t), \widehat{\omega}_{16}(t))$ of the solution of \eqref{gCLMG_eq} in the phase space. The red ball corresponds to the long-time average.}
\label{fig:cascade}
\end{center}
\end{figure}
\black
The aim of this paper is to construct an invariant measure associated with solutions of the stochastic gCLMG equation and to establish its uniqueness and convergence properties. This work represents a first step toward a mathematical understanding of the spectral law exhibiting anomalous dissipation across the inertial range, viewed through the dynamical systems theory. 
For this purpose, we adopt a simplified setting—specifically, a smooth stochastic forcing and a sufficiently large viscosity—so as to clearly distinguish the model from the classical Burgers equation.

Section 2 introduces the necessary notation and establishes the global-in-time well-posedness of the equation. Our analysis in this section follows closely the methodology presented in  \cite{Boritchev_Kuksin_2021_onedimensional}, as no essential differences from the Burgers equation arise at this stage. The energy method plays a central role here; in particular, the choice 
$a=-2$ is crucial for the energy estimates and thus proving the global existence of the solution.
Using the uniform energy bounds derived in Section 2, we prove the existence of an invariant measure in Section 3 via the classical Krylov–Bogoliubov argument. Section 4 is devoted to establishing the uniqueness of the invariant measure and to proving the mixing property for sufficiently large viscosity coefficients.
A regime in which the nonlocal structure of the equation becomes prominent here. In this part of the analysis, techniques reminiscent of the energy-based global approach developed for the two-dimensional Navier–Stokes equations \cite{mattingly1999ergodicity, constantin2013ergodicity} become indispensable—for instance, in estimating the moments of the invariant measure—whereas in the Burgers case pointwise arguments are often sufficient. We also obtain exponential mixing with respect to the $L^2$-based Lipschitz–dual metric, again in analogy with the Navier–Stokes setting.
The ergodic theory for small viscosity remains a highly interesting problem and will require more sophisticated techniques; this will be addressed in subsequent work. The final section is in the Appendix.

\section{Well-posedness of stochastic gCLMG equation}
%We use the Hilbert transform $\mathcal{H}$ in the formulation of the equation. The definition and properties of the Hilbert transform are given in the appendix. 
To mention our main results precisely, we first explain the notation. 
We begin by defining the following real Hilbert space $H$ as the space of functions with zero mean in $L^2(\mathbb{S}^1)$, on $\mathbb{S}^1 \coloneqq \mathbb{R}/2\pi\mathbb{Z}$.
\[
    H \coloneqq \left\{u \in L^2(\mathbb{S}^1) \setmid  \int_0^{2\pi} u(x) dx = 0 \right\}.
\]
The inner product on $H$ is the standard inner product on $L^2$ on $\langle u,v \rangle_{H} \coloneqq \int_0^{2\pi} u(x){v}(x) dx$.

We endow this space $H$ with the real-valued orthonormal basis $\{e_k\}_{k \in \mathbb{Z}^*}$, where $\mathbb{Z}^* \coloneqq \mathbb{Z}\setminus\{0\}$, defined as
\[
    e_k(x) = 
    \begin{cases}
        \frac{1}{\sqrt{\pi}}\cos(kx), & (k \ge 1), \\
        \frac{1}{\sqrt{\pi}}\sin(-kx), & (k \le -1).
    \end{cases}
\]
Any function $u \in H$ can be expanded with Fourier coefficients $u_k$ as
\[
    u(x) = \sum_{k \in \mathbb{Z}^*} u_k e_k(x), \quad \text{where} \quad u_k \coloneqq (\mathscr{F}u)(k)\coloneqq \langle u, e_k \rangle_{H}.
\]
For $m \in \N$,\black we define the Sobolev space $\dot{H}^m$ as the subset of $H$ given by
\begin{equation*}
	\dot{H}^{m} \coloneqq \left\{u \in H \setmid \|u\|_{\dot{H}^m}^2 < \infty \right\},
\end{equation*}
where the norm $\|\cdot\|_{\dot{H}^m}$ is induced by the inner product
\[
    \langle u, v \rangle_{\dot{H}^m} \coloneqq \sum_{k\in \mathbb{Z}^*} |k|^{2m} u_k v_k.
\]
 Namely, $\|u\|_{\dot{H}^{m}}^2 = \sum_{k\in\mathbb{Z}^*}|k|^{2m} u_k^2$.
 We write the space of probability measures on $\dot{H}^m$ as $\mathcal{P}(\dot{H}^m)$. 
 By $B_b(\dot{H}^m)$, we denote the set of bounded Borel functions on $\dot{H}^m$. For $\mu\in\mathcal{P}(\dot{H}^m)$ 
 and $f \in B_b(\dot{H}^m)$, we use the notation 
 \[
 \iprod{f, \mu} =\int_{\dot{H}^m} f\;  d\mu
.\]
For $0< T < \infty$, \black
 we then introduce the function space $\dot{X}_{T}^{m} \coloneqq \{f \in X_{T}^{m} \mid f(0) = 0\}$, 
 where $X_T^m\coloneqq C([0,T]; \dot{H}^m)$ denotes the space of continuous functions from $[0, T]$ to $\dot{H}^m$ with the norm
 $\norm{f}_{X_T^m}:=\sup_{0 \leq t \leq T} \norm{f(t)}_{\dot{H}^m}$. 
  We also use the notation $X^m_{\infty}\coloneqq C([0,\infty); \dot{H}^m)$. \black Note that $X_T^m$ is a Banach space and $\dot{X}_T^m$
 is a closed subset of $X_T^m$. 
We denote by $C(a,b,c,\ldots)$, a constant depending on the parameters $a,b,c,\ldots$. For simplicity, the same letter $C$ will be used for various constants in the estimates, as long as no confusion occurs. \black

In equation \eqref{gCLMG_eq}, the Hilbert transform $\mathcal{H}$ for $u \in H$ and $k\in \mathbb{Z}$ is defined by  
$$
    (\mathscr{F}\mathcal{H}(u))(k) = -i \mathrm{sgn}( k \black)(\mathscr{F}u)(k),
$$
where $\mathrm{sgn}: \R\to\R$ is the sign function. 
The random forcing $f(t,x)$ we consider  in \eqref{gCLMG_eq} \black is in the form of
\begin{equation}
f(t, x)= \partial_t \xi(t,x) 
\label{random-forcing}
\end{equation}
with 
\[
\xi(t,x) \coloneqq \sum_{k \in \mathbb{Z}^\ast} b_k \beta_k(t)e_k(x),
\]
 where $\{b_k\}_{k\in \mathbb{Z}^\ast}$ is a sequence of real numbers satisfying $B_{0} \coloneqq \sum_{k\in \mathbb{Z}^\ast} b_k^2<\infty$
and $\{\beta_k(t)\}_{k\in \mathbb{Z}^\ast}$ is a sequence of independent Brownian motions in a filtered probability space 
$(\Omega,\mathcal{F},\mathbb{P}, (\mathcal{F}_t)_{t\ge 0})$, that 
is, $\beta_k(t)$ is $ (\mathcal{F}_t)_{t\ge 0}$ adapted and $\beta_k(t)-\beta_k(s)$ is independent of $\mathcal{F}_{s}$ for $s \le t.$ 
%are independent standard Wiener processes. The Wiener processes $\%{\beta_k(t)\}_{k\in \mathbb{Z}^\ast}$ are independent of the initial %data $\omega_0$ in (\ref{gCLMG_eq}) which are random variables on a %probability space $(\Omega,\mathcal{F},\mathbb{P})$. 
The expectation with respect to $\mathbb{P}$ is denoted by $\mathbb{E}$ in what follows. %$\xi(t)$ is adapted to the filtration $\mathcal{F}_t$.
Note that if the sequence $\{b_k\}_{k\in \Z^*}$ satisfies 
\[
    B_m\coloneqq \sum_{k\in \Z^\ast} \vert k \vert^{2m} b_k^2 <\infty ,
\]
then $\xi\in X_{\infty}^m$ almost surely.
%Moreover, from the independent property, we may assume that $\Omega=\Omega_1 \times \Omega_2$ with $\eta=(\eta_1, \eta_2)$, $\mathcal{F}=\mathcal{F}_1 \otimes \mathcal{F}_2$, and $\mathbb{P}=\mathbb{P}_1 \otimes \mathbb{P}_2$. Then, $\omega_0$ depends only on $\eta_1$. The random forcing $\xi$ is on $\Omega_2$ and $\mathcal{\tilde{F}}_t (\subset \mathcal{F}_2) $-adapted.    
%which we refer to as "degenerate noise" or "colored noise". 
%We now clarify the notion of solution to the equation \eqref{gCLMG_eq}
%subject to the random forcing \eqref{random-forcing} as follows.
%\begin{remark}
%    Unless otherwise specified, we assume that $\omega_0$ is $\mathcal{F}_1\otimes\mathcal{\tilde{F}}_0$-measurable and independent of $\xi$.
%\end{remark}

\begin{definition}
Let $T>0$, $a\in \R$ and $\omega_0 \in {\dot{H}^m}$. We say that 
{ $\omega \in X_{T}^{m}$}
is a solution to the initial value problem, 
\begin{equation}
\omega_{t} + au\omega_{x} - u_{x} \omega  - \nu \omega_{xx} = \xi_{t}, \quad u_{x} = \mathcal{H}(\omega), \quad \omega(0) = \omega_{0},
\label{gCLMG_eq_r}
\end{equation}
if 
\begin{equation*}
    \omega(t) = e^{\nu t\partial^2_x}\omega_0 + \int_0^t e^{\nu(t-s)\partial^2_x}\{-a(u\omega)_x(s) +(1+a)(u_x\omega)(s)\}ds + \int_0^te^{\nu(t-s)\partial^2_x}d\xi(s)
\end{equation*} holds in $\dot{H}^m$ for $t\in [0, T]$ and $\mathbb{P}$-$a.s.$, where $e^{\nu t \partial^2_x} = \mathscr{F}^{-1}e^{-t\nu k^2}\mathscr{F}$ for $t\ge 0$ represents the heat semigroup.
\end{definition}

The basic idea in the proof of the existence of a global solution is the same as that in Boritchev \& Kuksin \cite{Boritchev_Kuksin_2021_onedimensional}.
As a standard technique for treating additive noise, we divide equation (\ref{gCLMG_eq_r}) into the linear part and the nonlinear part. In the following lemma, we show the existence of a solution to the stochastic heat equation.

\begin{lemma} \label{S_heat_wellposedness}
Let $0<T<\infty$, $\nu >0$ and  $m \in \N$ \black be fixed. 
Let $m_\ast \in \N$ be such that $m_\ast> m$ \black and assume that $B_{m_*}<\infty$.
Then, there exists a unique $(\mathcal{F}_t)_{t\ge 0}$-adapted solution $v \in X_T^m$ a.s. to the following linear equation 
\begin{equation}\label{S_heat_eq}
v_{t} - \nu v_{xx} = \xi_{t}, \qquad v(0) = 0,
\end{equation}
%Namely,  for all $t\in [0,T]$, and a.s.$\omega\in\Omega$  the following equality holds in $\dot{H}^{m-2}$.
%\begin{equation*}
	%v(t) - \nu \int_{0}^{t} v_{xx}(s) ds = \omega_{0} + \xi(t).
%\end{equation*}
which is represented by
\begin{equation}\label{S_conv}
    v(t) = \int_0^te^{\nu(t-s)\partial^2_x}d\xi(s). 
\end{equation}
%Furthermore, the mapping  $(\omega_0, \xi) \in H^{m} \times X_{T}^{m_{*}} \mapsto v \in X_{T}^{m}$ is linear and continuous.
\end{lemma}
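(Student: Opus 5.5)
The plan is to work mode by mode in the Fourier basis $\{e_k\}$, where the heat semigroup is diagonal. Writing $v(t)=\sum_{k\in\Z^*} v_k(t)e_k$, equation \eqref{S_heat_eq} reduces to the decoupled family of scalar Ornstein--Uhlenbeck equations
\[
dv_k = -\nu k^2 v_k\,dt + b_k\,d\beta_k, \qquad v_k(0)=0 .
\]
Each of these has the explicit unique adapted solution $v_k(t)=b_k\int_0^t e^{-\nu k^2(t-s)}\,d\beta_k(s)$. Summing these coefficients formally gives \eqref{S_conv}. The task is therefore to show that this series defines an adapted process with paths in $X_T^m=C([0,T];\dot H^m)$ almost surely, and that it is the only such solution in the mild sense used in the Definition.

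\textbf{Adaptedness and convergence in $\dot H^m$.} Adaptedness of each $v_k$ is immediate from the stochastic integral. For the sum, the It\^o isometry gives
\[
\E|v_k(t)|^2=\frac{b_k^2}{2\nu k^2}\bigl(1-e^{-2\nu k^2 t}\bigr)\le b_k^2 t .
\]
Hence $\E\|v(t)\|_{\dot H^{m_*}}^2\le T B_{m_*}<\infty$ for each fixed $t$, so the series converges in $L^2(\Omega;\dot H^{m_*})$.

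\textbf{Pathwise continuity.} This is the main point, and it is where the gap $m_*>m$ is used. I would first integrate by parts in each mode:
\[
v_k(t)=b_k\beta_k(t)-\nu k^2 b_k\int_0^t e^{-\nu k^2(t-s)}\beta_k(s)\,ds ,
\]
which is equivalent to
\[
v(t)=\xi(t)+\nu\int_0^t \partial_x^2 e^{\nu(t-s)\partial_x^2}\xi(s)\,ds .
\]
Since $B_{m_*}<\infty$, the process $\xi$ lies in $X_T^{m_*}$ a.s. The map $\xi\mapsto v$ above can then be controlled in $X_T^m$ using the smoothing estimate
\[
\|\partial_x^2 e^{\nu\tau\partial_x^2}\|_{\dot H^{m_*}\to\dot H^m}\le C\tau^{-(1-(m_*-m)/2)} .
\]
This bound is integrable in $\tau$ when $m_*-m\ge1$, which holds because $m,m_*\in\N$. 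Continuity in $t$ then follows from dominated convergence together with the strong continuity of the semigroup. Because this argument is pathwise, it also shows that $\xi\mapsto v$ is a continuous linear map $X_T^{m_*}\to X_T^m$.

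As an alternative route to continuity, I would use a Kolmogorov-type argument. Combining the moment bound $\E\|v(t)-v(s)\|_{\dot H^m}^2\le C|t-s|^{m_*-m}$ with Gaussianity gives higher moments, and Kolmogorov's criterion then yields a continuous modification.

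\textbf{Uniqueness.} If $v^1,v^2$ are two adapted solutions in $X_T^m$, their difference $w$ satisfies $w(t)=0$ in mild form (no noise, zero data). In weak or Fourier form it satisfies $\dot w_k=-\nu k^2 w_k$ with $w_k(0)=0$, so $w_k\equiv0$ for every $k$, and hence $w=0$ a.s.

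I expect the pathwise continuity step to be the main obstacle. The main technical care there is checking that the singular kernel estimate is integrable, and that $v$ is a genuine continuous modification rather than merely a process that is square-integrable at each fixed time.
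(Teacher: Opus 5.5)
Your proof is correct, but its main step takes a different route from the paper. The paper cites Da Prato--Zabczyk for existence and uniqueness of the stochastic convolution \eqref{S_conv}. It then proves path continuity only through the Kolmogorov criterion. It splits $v(t)-v(s)$ into the part driven by the noise on $[s,t]$ and the part where $e^{\nu(t-\tau)\partial_x^2}-e^{\nu(s-\tau)\partial_x^2}$ acts on the noise on $[0,s]$. It bounds $\E\|v(t)-v(s)\|_{\dot H^m}^2\le C(t-s)^\gamma$ mode by mode and invokes Gaussianity; this is essentially your ``alternative route''.

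Your primary route integrates by parts in each mode to get the pathwise identity $v(t)=\xi(t)+\nu\int_0^t\partial_x^2e^{\nu(t-s)\partial_x^2}\xi(s)\,ds$. Once $\xi\in X_T^{m_*}$ a.s., everything reduces to a deterministic singular integral. What this buys you:
\begin{itemize}
\item a bounded linear map $X_T^{m_*}\to X_T^m$, which makes adaptedness transparent;
\item the a.s.\ convergence $P_Nv\to v$ in $X_T^m$ used around Lemma \ref{v^N_conv}, directly from $\sup_{t\le T}\|(I-P_N)\xi(t)\|_{\dot H^{m_*}}\to0$, with no moment, hypercontractivity or Borel--Cantelli argument;
\item an actual uniqueness argument, where the paper only gives a citation.
\end{itemize}
In exchange, the Kolmogorov route gives explicit H\"older exponents in time. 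It also exploits the smoothing of the stochastic convolution itself, which gains almost one derivative over the noise. The same increment computation therefore works already for $m_*=m$ (indeed for any real $m_*>m-1$). Your kernel $\tau^{-(1-(m_*-m)/2)}$, by contrast, is integrable only when $m_*>m$.

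Two small slips to fix; neither affects the conclusion.
\begin{itemize}
\item The bound $\|\partial_x^2e^{\nu\tau\partial_x^2}\|_{\dot H^{m_*}\to\dot H^m}\le C\tau^{-(1-(m_*-m)/2)}$ is false when $m_*-m>2$. Testing on $e_1$ shows the operator norm tends to $1$ as $\tau\to0$, while the right-hand side tends to $0$. So first replace $m_*$ by $m+1$, which is legitimate since $B_{m+1}\le B_{m_*}$; the paper makes the same reduction.
\item In your alternative route the increment bound should read $C|t-s|$, not $C|t-s|^{m_*-m}$. The contribution of the noise on $[s,t]$ is $\sum_k|k|^{2m}b_k^2\,\frac{1-e^{-2\nu k^2(t-s)}}{2\nu k^2}\sim B_m(t-s)$, so the exponent cannot exceed $1$.
\end{itemize}
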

\begin{proof}
We check the Kolmogorov criterion (see \cite{DaPrato_Zabczyk_2014}).
Without loss of generality, we may assume $m_\ast\in (m, m+2]$. It is known that \eqref{S_heat_eq} has a unique solution written as \eqref{S_conv}, see \cite{DaPrato_Zabczyk_2014}. For $t, s\in[0,T]$ with $s<t$, we have
\begin{align}
     v(t,x)-v(s,x) 
    &= \sum_{k\in \mathbb{Z}^*} \int_s^t e^{\nu(t-\tau)\partial^2_x}b_k e_kd\beta_k(\tau) + 
    \sum_{k\in \mathbb{Z}^*} \int_0^s \{e^{\nu(t-\tau)\partial^2_x}-e^{\nu(s-\tau)\partial^2_x}\}b_k e_kd\beta_k(\tau).\label{eq:heat-sol}
\end{align}
For arbitrarily $\gamma\in [0,1]$, the expectation of the first term is given by
    \begin{align*}
    \E\left[\norm{\sum_{k\in \mathbb{Z}^*} \int_s^t e^{\nu(t-\tau)\partial^2_x}b_k e_kd\beta_k(\tau)}_{\dot{H}^m}^2\right]
        &=\sum_{k\in \mathbb{Z}^*} |k|^{2m}\int_s^t e^{-2\nu(t-\tau)k^2}|b_k|^2d\tau\\    
        &=\sum_{k\in \mathbb{Z}^*} |k|^{2m}\frac{|b_k|^2}{2\nu k^2}(1-e^{-2\nu(t-s)k^2})\\
        &\leq \sum_{k\in \mathbb{Z}^*} |k|^{2m_\ast}\frac{|b_k|^2}{2\nu k^2}2^\gamma(\nu k^2)^\gamma (t-s)^\gamma\\
        &= \frac{B_{m_\ast}}{(2\nu )^{1-\gamma}} (t-s)^\gamma.
    \end{align*}
    The expectation of the second term becomes
    \begin{align*}
        &\E\left[\norm{\sum_{k\in \mathbb{Z}^*} \int_0^s \{e^{\nu(t-\tau)\partial^2_x}-e^{\nu(s-\tau)\partial^2_x}\}b_k e_kd\beta_k(\tau)}_{\dot{H}^m}^2\right] \\
        &=\sum_{k\in\mathbb{Z}^*}|k|^{2m}\int_0^s\{e^{-\nu(t-\tau)k^2}-e^{-\nu(s-\tau)k^2}\}^2b_k^2d\tau\\
        &=\sum_{k\in\mathbb{Z}^*}|k|^{2m}\int_0^s\{1-e^{\nu(t-s)k^2}\}^2e^{-2\nu(s-\tau)k^2}b_k^2d\tau\\
        &\leq C({\nu, m, m_*, \delta})(t-s)^{2\delta}\sum_{k\in\mathbb{Z}^*}\int_0^s(s-\tau)^{\frac{m_*-m}{2}-1-\delta}\left[(2\nu k^2(s-\tau))^{\frac{2+m-m_*}{2}+\delta}e^{-2\nu(s-\tau)k^2}\right]|k|^{2m_*}b_k^2d\tau.
        \end{align*}
        Note that $\frac{2+m-m_*}{2}+\delta\ge 0$ and $\sup_{r\ge 0} r^a e^{-r} < \infty$ for $a\ge 0$. Then, taking a sufficiently small $\delta\in[0,1]$ satisfying $\delta<\frac{m_\ast-m}{2}$, we obtain
        \begin{equation*}
        \E\left[\norm{\sum_{k\in \mathbb{Z}^*} \int_0^s \{e^{\nu(t-\tau)\partial^2_x}-e^{\nu(s-\tau)\partial^2_x}\}b_k e_kd\beta_k(\tau)}_{\dot{H}^m}^2\right]\leq C({\nu, m, m_*, \delta})(t-s)^{2\delta}B_{m_*}T^{\frac{m_*-m}{2}-\delta}.
    \end{equation*}
    Since $\gamma\in [0, 1]$ is arbitrary, we take $\gamma = 2\delta$. Therefore, taking the expectation of (\ref{eq:heat-sol}), we have that for all $t, s>0$, 
   $$
        \E[\norm{v(t)-v(s)}_{\dot{H}^m}^2]\leq C(t-s)^\gamma.
   $$
Since $v(t)$ is a Gaussian process, the Kolmogorov criterion (Proposition 3.16 of \cite{DaPrato_Zabczyk_2014}) is satisfied.  
%by Theorem \ref{Kolmogrov} as shown in the appendix. 
Therefore, we conclude $v\in X_T^m$ almost surely. More precisely, by the choice of $\gamma$ and $\delta$, the Kolmogorov continuity theorem implies that $v$ is $\alpha$-H\"{o}lder continuous for any $\alpha \in (0, \frac{m_\ast-m}{2})$.
\end{proof}

Recall that \( u_x = \mathcal{H}(\omega) \) in \eqref{gCLMG_eq_r}. We decompose \( \omega = v + w \). Then we have  
\[
u_x = \mathcal{H}(v) + \mathcal{H}(w), \quad  \quad   u = -(-\partial^2_x)^{-1/2} (v + w).\black 
\]  
We consider the following nonlinear part of equation \eqref{gCLMG_eq_r} for $a\in \mathbb{R}$.

\begin{equation}\label{NL_term_eq}
		w_{t} - \nu w_{xx}- (\mathcal{H}(v)+\mathcal{H}(w))(v+w) - a\{(-\partial^2_x)^{-\frac{1}{2}}(v+w)\}(v+w)_x  = 0,  \quad w(0) = \omega_0.
\end{equation}
Since $v$ is given in Lemma \ref{S_heat_wellposedness}, it is the equation of $w$. %For clarification of the notation, we write in \eqref{NL_term_eq} $u = -(-\partial^2_x)^{-1/2}(v + w)$  and then in $u_x = \mathcal{H}(v) + \mathcal{H}(w)$.%
%\begin{definition}
%    Let $m\ge 0$ and $\omega_0\in \dot{H}^m$. We say $w(t)$ is a classical solution to \eqref{NL_term_eq} if the equation \eqref{NL_term_eq} holds in $\dot{H}^{m-2}$.
%\end{definition}
We show the local existence of a solution to \eqref{NL_term_eq}.
%\textcolor{red}{Remark : There are two options for the way to take the initial data: another choice is $v(0)=\omega_0$ and $w(0)=0$. Why do we choose 
%$v(0)=0$ and $w(0)=\omega_0$ ?}

\begin{theorem}[Local existence]\label{NL_local}
Let $a\in\mathbb{R}, \nu >0$, $m\in\mathbb{N}$. Let $m_\ast \in \N$ be such that $m_\ast> m$. Assume $B_{m_{\ast}}<\infty$. \black Let $v \in X_1^m$ be the solution to the equation \eqref{S_heat_eq} given in Lemma \ref{S_heat_wellposedness}. Let $\omega_0 \in \dot{H}^m$ be $\mathcal{F}_0$-measurable. 
%that is independent of $\xi$. 
Then, there exist a random time $T^* \in (0,1]$ and a unique $(\mathcal{{F}}_t)_{t\ge 0}$-adapted solution $w \in X_{T^*}^m$ to the equation \eqref{NL_term_eq} a.s. Moreover, there exists a maximal existence time $T_{max}>0$ and $T_{max} =\infty$ or 
$\lim_{t \uparrow T_{max}} \norm{w(t)}_{\dot{H}^m} =+\infty$.
%in the sense that 
%\begin{equation}
%	w(t) = e^{\nu t\partial^2_x}\omega_0+\int_0^te^{\nu(t-s)\partial^2_x}\{(u_x(v+w))(s)+2(u(v_x+w_x))(s)\}ds,
%    \quad 
%    w(0)=\omega_0
%\end{equation}
%holds almost surely in $\dot{H}^m$ for $t\in[0,T^*]$.
% Moreover, $w$ belongs to the solution space \[
% % B_{T^*}^m\coloneqq \{w\in X_{T^*}^m; \norm{w}_{X_{T^*}^m}\leq M\coloneqq 2\norm{\omega_0}_{X_{T^*}^m}\}\]
\end{theorem}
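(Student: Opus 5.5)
We argue pathwise: fix a realization of the noise for which $v\in X_1^m$, which holds on a full-measure event by Lemma \ref{S_heat_wellposedness} (applied with $m_\ast>m$ and $B_{m_\ast}<\infty$). We then solve \eqref{NL_term_eq} by a fixed point of the mild formulation
\[
\Phi(w)(t)=e^{\nu t\partial_x^2}\omega_0+\int_0^t e^{\nu(t-s)\partial_x^2}\Big\{-a\big(U(v+w)\big)_x+(1+a)\,\mathcal{H}(v+w)\,(v+w)\Big\}(s)\,ds ,
\]
with $U(\phi)=-(-\partial_x^2)^{-1/2}\phi$. Here we rewrote the nonlinearity as $-a(u\omega)_x+(1+a)u_x\omega$, which puts the loss of a derivative into a total derivative. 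The fixed point is sought in the closed ball $B_M=\{w\in X_T^m:\ \norm{w}_{X_T^m}\le M\}$ with $M=2\norm{\omega_0}_{\dot H^m}+1$, for a random $T\le 1$ to be chosen.

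\textbf{Key estimates.} For $m\ge 1$, $\dot H^m$ is an algebra, and $\mathcal{H}$ is an isometry on every $\dot H^m$. Also $U$ maps $\dot H^m\to\dot H^{m+1}$ boundedly. Hence $\norm{\mathcal{H}(\phi)\psi}_{\dot H^m}\le C\norm{\phi}_{\dot H^m}\norm{\psi}_{\dot H^m}$ and $\norm{U(\phi)\psi}_{\dot H^m}\le C\norm{\phi}_{\dot H^m}\norm{\psi}_{\dot H^m}$. One subtlety is the zero mean: $u_x\omega$ need not have zero mean, so we either project onto $H$ (noting that $-a(u\omega)_x+(1+a)u_x\omega$ has the same mean as the original equation's nonlinearity) or check directly that the mean of $\mathcal{H}(\omega)\omega$ vanishes, since $\int\mathcal{H}(\omega)\omega\,dx=0$ by antisymmetry of $\mathcal H$. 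The derivative in the advection term is absorbed by the smoothing estimate $\norm{e^{\nu t\partial_x^2}\partial_x g}_{\dot H^m}\le C(\nu t)^{-1/2}\norm{g}_{\dot H^m}$, which is integrable in $t$. Writing $K=\norm{v}_{X_1^m}$, this gives
\[
\norm{\Phi(w)}_{X_T^m}\le \norm{\omega_0}_{\dot H^m}+C(\nu,a)\,(T^{1/2}+T)(M+K)^2 ,
\]
and the analogous difference bound $\norm{\Phi(w_1)-\Phi(w_2)}_{X_T^m}\le C(T^{1/2}+T)(M+K)\norm{w_1-w_2}_{X_T^m}$. Choosing $T^\ast=\min\{1,\,c(\nu,a)(M+K)^{-4}\}$ makes $\Phi$ a contraction of $B_M$. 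Continuity in time of $\Phi(w)$ follows from strong continuity of the heat semigroup together with the same integrable kernel bound. Uniqueness in all of $X_{T^\ast}^m$ (not only in the ball) follows by a Gronwall argument on the difference of two solutions, using the same estimates.

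\textbf{Adaptedness and maximal solution.} $T^\ast$ is a function of $\norm{\omega_0}_{\dot H^m}$ and $\norm{v}_{X_1^m}$ only. Adaptedness of $w$ comes from the Picard iterates $w^{(n+1)}=\Phi(w^{(n)})$: each iterate is adapted, since at time $t$ it depends only on $\omega_0$ and $v|_{[0,t]}$, and they converge pathwise. The one delicate point is that $T^\ast$ depends on $v$ over the whole interval $[0,1]$ and is therefore not a stopping time. We avoid this by replacing $K$ with $\sup_{s\le t}\norm{v(s)}_{\dot H^m}$ and defining $T^\ast$ as a first hitting time, or by working with the truncated nonlinearity $\theta_R(\norm{w}_{X_t^m})\,\cdot$ and letting $R\to\infty$.

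For the maximal solution, the local time depends only on the size of the data and of $v$. We therefore restart from $w(T^\ast)$, using $v(\cdot+T^\ast)-e^{\nu\cdot\partial_x^2}v(T^\ast)$ as the new noise term, or equivalently simply treat $v$ on $[T^\ast,T^\ast+1]$ as a given continuous forcing, and glue the solutions together by uniqueness. We then set $T_{max}=\sup$ of the existence times. Suppose $T_{max}<\infty$ but $\liminf_{t\uparrow T_{max}}\norm{w(t)}_{\dot H^m}<\infty$. Then, because $v$ is bounded on $[0,T_{max}+1]$, there is a uniform lower bound on the extension time, which contradicts maximality. Upgrading $\liminf$ to $\lim$ uses the same argument: if $\norm{w(t_n)}_{\dot H^m}\le L$ along some sequence $t_n\uparrow T_{max}$, then the solution extends a fixed amount past $t_n$, which is again impossible.

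I expect the main obstacle to be the bookkeeping that makes $T^\ast$ measurable and the solution adapted, rather than the analytic estimates. The $m\ge1$ algebra property and the half-derivative smoothing make the latter routine.
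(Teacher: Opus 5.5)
Your proposal is correct and follows essentially the same route as the paper: a pathwise Banach fixed point for the mild formulation in a ball of radius comparable to $2(1+\norm{\omega_0}_{\dot H^m})$ in $X_T^m$, using the splitting $-a(u\omega)_x+(1+a)u_x\omega$, the $\nu^{-1/2}(t-s)^{-1/2}$ smoothing bound and the $\dot H^m$ product estimate, followed by uniqueness in all of $X_{T^\ast}^m$ and a restart argument for the blow-up alternative (the first term of your $\Phi$ should read $-a\big(U(v+w)\,(v+w)\big)_x$, as your later product estimate for $U(\phi)\psi$ indicates). Your extra remarks tighten points the paper's proof passes over: that $T^\ast$ defined through $\norm{v}_{X_1^m}$ is not a stopping time, that $\mathcal{H}(\omega)\omega$ has zero mean, and that the contradiction argument must assume $\liminf<\infty$ rather than $\lim<\infty$.
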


\begin{proof}
Let $0\le T \le 1$ and $\omega_0\in \dot{H}^m$. We define the operator $\Psi(w)$ to $\dot{H}^m$ for $w\in \dot{H}^m $ and $t \le T$ as follows.
\begin{equation} \label{eq_contract}
\begin{split}
    \Psi(w)(t) &\coloneqq e^{\nu t\partial^2_x}\omega_0
    + \int_0^t e^{\nu(t-s)\partial^2_x} \{ (u_x(v+w))(s) - a(u(v_x+w_x))(s) \} \, ds \\
    &= e^{\nu t\partial^2_x}\omega_0
    + \int_0^t e^{\nu(t-s)\partial^2_x} \{ -a(u(v+w))_x(s) + (1+a) u_x(v+w)(s) \} \, ds.
\end{split}
\end{equation}
We set $B_T^M\coloneqq \{w\in X_{T}^m; \norm{w}_{X_{T}^m}\leq M\}$ where $M>0$ will be determined later. 
We show that $\Psi$ is a contraction mapping from $B_T^M$ to $B_T^M$ for sufficiently small $T>0$.
   %Recall the isometry of the Hilbert transform from Theorem~\ref{Hilbert}.% 
   For any $w \in \dot{H}^m$, we have
    \begin{equation*}
    \begin{aligned}
        \norm{e^{\nu t\partial^2_x}\omega_0}_{\dot{H}^m}
        &= \norm{\mathscr{F}^{-1}e^{-\nu t k^2}|k|^m \mathscr{F}\omega_0}_{l^2} \\
        &= \norm{|k|^m e^{-\nu t k^2}\mathscr{F}\omega_0}_{l^2}
        \leq \norm{|k|^m\mathscr{F}\omega_0}_{l^2}
        = \norm{\omega_0}_{\dot{H}^m}, \\
        \norm{e^{\nu(t-s)\partial^2_x}w_x}_{\dot{H}^m}
        &= \norm{|k|^m|k|e^{-\nu(t-s)k^2}\mathscr{F}w}_{l^2} \\
        &\leq \sup_{k\in\mathbb{Z}} |k|e^{-\nu(t-s)k^2}
        \times \norm{|k|^m\mathscr{F}w}_{l^2}
        \lesssim \nu^{-\frac{1}{2}}(t-s)^{-\frac{1}{2}}\norm{w}_{\dot{H}^m}, \\
        \norm{u(t)}_{\dot{H}^m}
        &= \norm{u_x(t)}_{\dot{H}^{m-1}}
        = \norm{\mathcal{H}({{w}+v})}_{\dot{H}^{m-1}} \\
        &= \norm{{w(t)+v(t)}}_{\dot{H}^{m-1}}
        \leq \norm{{w(t)+v(t)}}_{\dot{H}^{m}}
        \leq \norm{w}_{X_T^m} + \norm{v}_{X_1^m}.
    \end{aligned}
\end{equation*}

    From these estimates and \eqref{Hm_product}, for $w\in B_T^M$ and for $0\le s<t \le T$, we get
\begin{align*}
    \norm{e^{\nu(t-s)\partial^2_x}(u(v+w))_x(s)}_{\dot{H}^m}
    &\lesssim \nu^{-\frac{1}{2}}(t-s)^{-\frac{1}{2}}    \norm{u(v+w)(s)}_{\dot{H}^m} \notag \\
    &\leq C(m)\nu^{-\frac{1}{2}}(t-s)^{-\frac{1}{2}}
    \norm{u(s)}_{\dot{H}^m}\norm{(v+w)(s)}_{\dot{H}^m} \notag \\
    &\leq C(m)\nu^{-\frac{1}{2}}(t-s)^{-\frac{1}{2}}
    \norm{(v+w)(s)}_{\dot{H}^m}^2 \notag \\
    &\lesssim C(m)\nu^{-\frac{1}{2}}(t-s)^{-\frac{1}{2}}
    (\norm{v(s)}_{\dot{H}^m}^2 + \norm{w(s)}_{\dot{H}^m}^2) \notag \\
    &\leq C(m)\nu^{-\frac{1}{2}}(t-s)^{-\frac{1}{2}}
    (\norm{v}_{X_1^m}^2 + \norm{w}_{\dot{H}^m}^2)
\end{align*}
and
\begin{align*}
        \norm{e^{\nu(t-s)\partial^2_x}(u_x(v+w))(s)}_{\dot{H}^m}
        &\leq \norm{(u_x(v+w))(s)}_{\dot{H}^m}\leq C(m)\norm{u_x}_{\dot{H}^m}\norm{v+w}_{\dot{H}^m}\\
        &\leq C(m)\norm{v+w}_{\dot{H}^m}^2\lesssim C(m)(\norm{v}_{X_1^m}^2+\norm{w}_{X_T^m}^2).
\end{align*}
Hence, we have
\begin{align*}
    \norm{\Psi(w)(t)}_{\dot{H}^m}
    &\leq \norm{e^{\nu t\partial^2_x}\omega_0}_{\dot{H}^m}
    + |a|\int_0^t \norm{e^{\nu(t-s)\partial^2_x}(u(v+w))_x(s)}_{\dot{H}^m} \, ds \notag \\
    &\quad + |1+a|\int_0^t \norm{e^{\nu(t-s)\partial^2_x}(u_x(v+w))(s)}_{\dot{H}^m} \, ds \notag \\
    &\leq \norm{\omega_0}_{\dot{H}^m}
    + C(a,m,\nu)(\norm{v}_{X_1^m}^2 + \norm{w}_{X_T^m}^2)T^{\frac{1}{2}} \notag \\
    &\quad + C(a,m)(\norm{v}_{X_1^m}^2 + \norm{w}_{X_T^m}^2)T \notag \\
    &\leq \norm{\omega_0}_{\dot{H}^m}
    + C(a,m,\nu)(1 + \norm{v}_{X_1^m}^2 + M^2)T^{\frac{1}{2}}.
\end{align*}
Taking the supremum in $t \in [0,T]$ on the left hand side, we have 
$$
        \norm{\Psi(w)}_{X_T^m}\leq 1+\|\omega_0\|_{\dot{H}^m}
        +C(a, m,\nu)(1+\norm{v}_{X_1^m}^2+M^2)
        T^{\frac{1}{2}}.
$$
    Hence, we can choose $M^\ast$ and $T^\ast$ as 
    \begin{eqnarray*}
M^\ast&=& 2(1+\|\omega_0\|_{\dot{H}^m}), \\
(T^\ast)^{\frac12} &=& \min\Big \{1,\frac{M^\ast}{4C(m,\nu)(1+\norm{v}_{X_1^m}^2+(M^\ast)^2)} \Big\} 
    \end{eqnarray*}
    so that $
    \norm{\Psi(w)(t)}_{X_T^m}\leq M^\ast$, 
    which means that $\Psi$ defines a mapping from $B_{T^\ast}^
    {M^\ast}$ to $B_{T^\ast}^{M^\ast}$. 
    
    Next, we show that $\Psi$ is a contraction mapping with a fixed $T^\ast$. 
    For $w_i\in B_{T^\ast}^{M^\ast}$ ($i=1,2$), we define 
     
    \[
        u^{(i)} \coloneqq -(-\partial^2_x)^{-\frac{1}{2}}(v+w_i).
    \]
    \black
    For $t \le T^\ast$, we have 
    \begin{align*}
  &\norm{u^{(1)}(v+w_1)(t) - u^{(2)}(v+w_2)(t)}_{\dot{H}^m} \notag \\
  &\quad \leq \norm{u^{(1)}(v+w_1)(t) - u^{(1)}(v+w_2)(t)}_{\dot{H}^m}
    + \norm{u^{(1)}(v+w_2)(t) - u^{(2)}(v+w_2)(t)}_{\dot{H}^m} \notag \\
  &\quad \leq C(m) \norm{u^{(1)}(t)}_{\dot{H}^m} \norm{w_1(t) - w_2(t)}_{\dot{H}^m}
    + C(m) \norm{v(t) + w_2(t)}_{\dot{H}^m} \norm{u^{(1)}(t) - u^{(2)}(t)}_{\dot{H}^m} \notag \\
  &\quad \leq   C(m) \left( \norm{v(t) + w_1(t)}_{\dot{H}^m}
    + \norm{v(t) + w_2(t)}_{\dot{H}^m} \right) \norm{w_1(t) - w_2(t)}_{\dot{H}^m} \notag \\ \black
  &\quad \lesssim C(m) \left( 1 + M^* + \norm{v}_{X_1^m} \right) \norm{w_1 - w_2}_{X_{T^*}^m},
\end{align*}
and similarly
\begin{align*}
    \norm{(u^{(1)}_{x}(v+w_1)-u^{(2)}_{x}(v+w_2))(t)}_{\dot{H}^m}\lesssim C(m)(1+M^*+\norm{v}_{X_1^m})\norm{w_1-w_2}_{X_{T^*}^m}.
\end{align*}
Hence, we obtain
\begin{align*}
    &\norm{\Psi(w_1)(t) - \Psi(w_2)(t)}_{\dot{H}^m} \notag \\
    &\leq |a|\int_0^t \norm{e^{\nu(t-s)\partial^2_x}(u^{(1)}(v+w_1) - u^{(2)}(v+w_2))_x(s)}_{\dot{H}^m} \, ds \notag \\
    &\quad + |1+a|\int_0^t \norm{e^{\nu(t-s)\partial^2_x}(u^{(1)}_{x}(v+w_1) - u^{(2)}_{x}(v+w_2))(s)}_{\dot{H}^m} \, ds \notag \\
    &\leq C(a, m, \nu)
    \norm{u^{(1)}(v+w_1) - u^{(2)}(v+w_2)}_{X_{T^*}^m}
    \int_0^t (t-s)^{-\frac{1}{2}} \, ds \notag \\
    &\quad + C(m,a)\int_0^t
    \norm{(u^{(1)}_{x}(v+w_1) - u^{(2)}_{x}(v+w_2))(s)}_{\dot{H}^m} \, ds \notag \\
    &\leq C(a,m,\nu){T^*}^{\frac{1}{2}}
    (1 + M^* + \norm{v}_{X_1^m})\norm{w_1 - w_2}_{X_T^m} \notag \\
    &\quad + C(a,m)T^*
    \norm{u^{(1)}_{x}(v+w_1) - u^{(2)}_{x}(v+w_2)}_{X_{T^\ast}^m} \notag \\
    &\leq C(a, m, \nu){T^\ast}^{\frac{1}{2}}
    (1 + M^\ast + \norm{v}_{X_1^m})\norm{w_1 - w_2}_{X_T^m}.
\end{align*}
taking $T^{\ast}$ sufficiently small (we denote by the same letter) such that 
$ C(m,\nu)({T^\ast}^{\frac{1}{2}})(1+M^\ast+\norm{v}_{X_1^m}) \le \frac12,$ we have
$$
    \norm{\Psi(w_1)(t)-\Psi(w_2)}_{X_{T^\ast}^m} \le \frac12 \norm{w_1-w_2}_{X_{T^\ast}^m}.
$$
This implies that $\Psi:B_{T^\ast}^{M^\ast}\to B_{T^\ast}^{M^\ast}$ is a contraction mapping. 
Consequently, according to the Banach fixed point theorem, there exists a unique $w\in B_{T^\ast}^{M^\ast}$ that satisfies $w=\Psi(w)$. 
We have obtained the unique mild solution in $B_{T^\ast}^{M^\ast}$ which is a subset of $X_{T^\ast}^m$. 
In fact, this is the only mild solution in the whole space $X_{T^\ast}^m$. Suppose $w_1, w_2\in X_{T^\ast}^m$ are the mild solutions for the same $\omega_0$ and $v$. 
Similarly as above, we obtain, for $0 < T\le T^\ast$, 
    \begin{align*}
       \norm{w_1-w_2}_{X_{T}^m} 
       &=\norm{\Psi(w_1)-\Psi(w_2)}_{X_T^m}\\
       &\leq C(m, \nu)T^{\frac12} (1+R+\norm{v}_{X_1^m})
       \norm{w_1-w_2}_{X_{T}^m}
    \end{align*}
    with here $R:= \max\{\norm{w_1}_{X_{T^\ast}^m}, \norm{w_2}_{X_{T^\ast}^m} \}$. We then deduce that $w_1=w_2$ on $[0,T^{\ast\ast}]$ by choosing 
    \[
    C(m, \nu)(T^{\ast\ast})^{\frac12} (1+R \norm{v}_{X_1^m})=\frac12 .
    \] Iterating this procedure $[T^{\ast\ast}, 2T^{\ast\ast}], [2T^{\ast\ast}, 3T^{\ast\ast}],\cdots,$
    we have $w_1=w_2$ in $X_{T^\ast}^m$. 
    Finally, we define 
    \[
    T_{max} :=\sup\{t>0, \mbox{there exists a unique solution to {\eqref{eq_contract}} such that }  \norm{w}_{X_{t}^m} <\infty\}.
    \]
    If $T_{max}=+\infty$, the solution exists globally. Thus, we consider 
    the case $T_{max}<+\infty$ and assume  $\lim_{t \uparrow T_{max}} \norm{w(t)}_{\dot{H}^m} <\infty$. In this case, we can find a small $\eta>0$ such that  
    $\norm{w(T_{max}-\tau)}_{\dot{H}^m} <\infty$. 
    By the above argument, we can solve the equation starting from $w(T_{max}-\tau)$ on $[0,T^\ast]$. This constructs a solution on $[0, T_{max} -\tau +T^\ast]$, which contradicts the definition of 
    $T_{max}$.
    \end{proof}
  
The following theorem is the same result as proved in~\cite{tsuji2023statistical}.
\begin{lemma}[Continuous dependence of  mild \black solution with respect to the initial data]\label{thm:conti_dependence}  Let $a\in \R$, $\nu >0$ and $m\in\mathbb{N}$.  Let $m_\ast \in \N$ such that $m_\ast> m$ and assume $B_{m_{\ast}}<+\infty$. \black Let $0<T<T_{max}$, where $T_{max}$ is the maximal existence time defined in Theorem \ref{NL_local}. \black   
%Suppose that $f_{1},f_{2} \in X_{T}^{m}$ and $\omega_0^{(1)},\omega_0^{(2)} \in \dot{H}^{m}$. 
Suppose that $w_{i} \in X_{T}^{m}$, $i=1,2$ represents the  mild \black  solution of \eqref{NL_term_eq} for the initial data $\omega_{0i}\in \dot{H}^m$  being $\mathcal{F}_0$-measurable.
Then, there exists a constant  $C(a, \nu,T,\norm{w_{1}}_{X_{T}^{m}},\norm{w_{2}}_{X_{T}^{m}}, \norm{v}_{X_T^m})\ge0$ \black such that the following inequality holds.
\begin{equation}
	\norm{w_{1}-w_{2}}_{X_{T}^{m}}\leq C \snorm{\omega_0^{(1)}-\omega_0^{(2)}}_{\dot{H}^{m}}.
	 \label{conti_dependence}
\end{equation}
\end{lemma}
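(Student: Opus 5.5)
We will prove \eqref{conti_dependence} by the same mild-formulation estimates used in the contraction step of Theorem \ref{NL_local}, followed by a short-time iteration; no energy method is needed. Both $w_1$ and $w_2$ solve \eqref{NL_term_eq} with the same $v$, so subtracting the two mild formulations \eqref{eq_contract} gives, for $t\in[0,T]$,
\begin{align*}
w_1(t)-w_2(t) &= e^{\nu t\partial_x^2}(\omega_0^{(1)}-\omega_0^{(2)}) - a\int_0^t e^{\nu(t-s)\partial_x^2}\bigl(u^{(1)}(v+w_1)-u^{(2)}(v+w_2)\bigr)_x(s)\,ds \\
&\quad + (1+a)\int_0^t e^{\nu(t-s)\partial_x^2}\bigl(u^{(1)}_x(v+w_1)-u^{(2)}_x(v+w_2)\bigr)(s)\,ds,
\end{align*}
where $u^{(i)}=-(-\partial_x^2)^{-1/2}(v+w_i)$.

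Set $K:=\norm{w_1}_{X_T^m}+\norm{w_2}_{X_T^m}+\norm{v}_{X_T^m}$, which is finite because $T<T_{max}$. The proof of Theorem \ref{NL_local} already gives the pointwise-in-time bound
\[
\norm{u^{(1)}(v+w_1)-u^{(2)}(v+w_2)}_{\dot H^m}+\norm{u^{(1)}_x(v+w_1)-u^{(2)}_x(v+w_2)}_{\dot H^m}\le C(m)(1+K)\norm{w_1(s)-w_2(s)}_{\dot H^m}.
\]
It uses the algebra property \eqref{Hm_product} together with the isometry $\norm{u^{(1)}-u^{(2)}}_{\dot H^m}=\norm{w_1-w_2}_{\dot H^{m-1}}\le\norm{w_1-w_2}_{\dot H^m}$. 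We combine this with $\norm{e^{\nu t\partial_x^2}f}_{\dot H^m}\le\norm{f}_{\dot H^m}$ and the smoothing estimate $\norm{e^{\nu(t-s)\partial_x^2}g_x}_{\dot H^m}\lesssim \nu^{-1/2}(t-s)^{-1/2}\norm{g}_{\dot H^m}$. Writing $\phi(t)=\norm{w_1(t)-w_2(t)}_{\dot H^m}$, this yields the singular Gronwall inequality
\[
\phi(t)\le \snorm{\omega_0^{(1)}-\omega_0^{(2)}}_{\dot H^m}+C(a,m,\nu)(1+K)\int_0^t\bigl((t-s)^{-1/2}+1\bigr)\phi(s)\,ds .
\]

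There are two ways to conclude, and we take the elementary one. Choose $\tau>0$, depending only on $a,m,\nu,K$, such that $C(a,m,\nu)(1+K)(2\tau^{1/2}+\tau)\le\frac12$. On $[0,\tau]$, taking the supremum gives $\norm{w_1-w_2}_{X_\tau^m}\le 2\snorm{\omega_0^{(1)}-\omega_0^{(2)}}_{\dot H^m}$. Because the equation is autonomous in $w$ given the path $v$, the mild formulation restarts at time $\tau$ with initial data $w_i(\tau)$ and with $v$ replaced by the shifted process, whose norm is still controlled by $K$. In this restarted formulation the free term $\int e^{\nu(t-s)\partial_x^2}d\xi$ is absorbed into $v$ on $[\tau,2\tau]$. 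Repeating the argument on $[k\tau,(k+1)\tau]$ for $N=\lceil T/\tau\rceil$ steps gives \eqref{conti_dependence} with $C=2^{N}$, which depends only on $a,\nu,T,m$ and the three norms in the statement. Alternatively, one could iterate the inequality once to remove the $(t-s)^{-1/2}$ singularity and then apply the standard Gronwall lemma.

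The main obstacle is the restart step. The mild formula was written from time $0$, so we must check that on $[t_0,T]$ one has
\[
w(t)=e^{\nu(t-t_0)\partial_x^2}w(t_0)+\int_{t_0}^t e^{\nu(t-s)\partial_x^2}N(v+w)(s)\,ds,
\]
where $N$ denotes the nonlinearity. This follows from the semigroup property $e^{\nu(t-t_0)\partial_x^2}e^{\nu(t_0-s)\partial_x^2}=e^{\nu(t-s)\partial_x^2}$ applied to the integral up to $t_0$. The remaining estimates are routine: all constants depend only on $K$ and never on the differences, so the bound is uniform along the iteration.
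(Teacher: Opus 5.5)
Your proposal is correct and follows the paper's proof: subtract the two mild formulations, reuse the Lipschitz and heat-smoothing estimates from the contraction step of Theorem~\ref{NL_local} to obtain $\phi(t)\le \snorm{\omega_0^{(1)}-\omega_0^{(2)}}_{\dot H^m}+C\int_0^t\bigl(1+(t-s)^{-1/2}\bigr)\phi(s)\,ds$, and close with a Gronwall-type argument. The only difference is that the paper simply writes ``Gronwall's inequality yields'' the bound, whereas your short-time restart (or one iteration of the kernel) actually justifies this step for the singular kernel $(t-s)^{-1/2}$; note that no shifted $v$ is needed, since the semigroup identity you write at the end already uses the same $v$ on $[t_0,T]$.
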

\begin{proof}
    In a similar way as in the proof of Theorem \ref{NL_local}, 
    since\begin{align*}
        \norm{w_1(t)-w_2(t)}_{\dot{H}^{m}}
        &=\norm{\Psi(w_1)(t)-\Psi(w_2)(t)}_{\dot{H}^{m}}\\
        &\leq \snorm{\omega_0^{(1)}-\omega_0^{(2)}}_{\dot{H}^{m}} \\
        &  +C(a, m, \nu, \norm{w_1}_{X_T^m}, \norm{w_2}_{X_T^m}, \norm{v}_{X_T^m})\int_0^t (1 + (t-s)^{-\frac{1}{2}})\norm{w_1(s)-w_2(s)}_{\dot{H}^m}ds.
        \end{align*}
        Gronwall's inequality yields
        \[
            \norm{w_1(t)-w_2(t)}_{\dot{H}^{m}}\leq C\snorm{\omega_0^{(1)}-\omega_0^{(2)}}_{\dot{H}^{m}}.
        \]
 Taking the supremum on $0\leq t\leq T$, we finish the proof.
\end{proof}

To prove an a priori estimate for the global solution, we introduce the projection operator $P_{N}\colon L^2(\mathbb{S}^1) \to \cap_{m=1}^{\infty} \dot{H}^{m}(\mathbb{S}^{1})$ by 
\[
P_{N}f \coloneqq \sum_{0<\abs{k}\leq N} f_k e_k.
\] 
Note that $P_{N}$ is a bounded linear operator for each $N \in \N$.
We consider the Galerkin approximation $w^N$ as the solution of the following equation.
\begin{equation}\label{galerkin_ode}
    \begin{aligned}
		&\partial_{t}w - \nu w_{xx} - P_{N}((P_N u)_{x} (P_N v+P_N w) - a P_N u (P_N v+ P_N w)_{x})=0, \\& \quad  (P_N u)_{x} = \mathcal{H}(P_N v +P_N w),\quad w^{N}(0)=P_N \omega_0,
\end{aligned}
\end{equation}
where $v$ is the solution of \eqref{S_heat_eq}. 
%Since it gives rise to the $2N$-dimensional ordinary differential equation for the coefficients $\widehat{u}(n)$, there exists a smooth classical solution, say $w^N$, locally in time. 
Let $T_{max}^N$ denote its maximal existence time. 
The following lemma gives the convergence of $P_N v$ to $v$ in $X_T^m$ a.s. for any $T>0$.  
\begin{lemma}\label{v^N_conv}  Let $\nu>0$ and $N,m\in \N$.
Let $m_\ast \in \N$ be such that $m_\ast> m$. Assume $B_{m_{\ast}}<+\infty$.  \black Let $T>0$ and  $v \in X^m_T$ be the solution of (\ref{S_heat_eq}). \black Set $v_N=P_N v$. Then, there exists a small $\delta \in (0,1)$ such that  
$$\E\left[\|v_N-v\|^2_{X_T^m}\right] \lesssim N^{-\delta}. $$
\end{lemma}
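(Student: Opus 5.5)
The plan is to write the tail $v-v_N=(I-P_N)v$ explicitly. By \eqref{S_conv},
\[
(I-P_N)v(t)=\sum_{\abs{k}>N} b_k\int_0^t e^{-\nu(t-\tau)k^2}d\beta_k(\tau)\,e_k,
\]
which is the solution of the stochastic heat equation \eqref{S_heat_eq} with noise coefficients $b_k\mathbf{1}_{\abs{k}>N}$. The key observation is that
\[
B_{m_\ast}^{(N)}:=\sum_{\abs{k}>N}\abs{k}^{2m_\ast}b_k^2 .
\]
Fix an intermediate exponent $m'\in(m,m_\ast)$. This may be non-integer; alternatively one replaces $m_\ast$ by a suitable real number, since the computations in Lemma~\ref{S_heat_wellposedness} never used integrality. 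We then trade regularity for decay:
\[
\sum_{\abs{k}>N}\abs{k}^{2m'}b_k^2\le N^{-2(m_\ast-m')}B_{m_\ast}.
\]

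The first step repeats the second-moment increment estimates of Lemma~\ref{S_heat_wellposedness} verbatim for the process $(I-P_N)v$, with $m_\ast$ replaced by $m'$. This gives, for $0\le s<t\le T$,
\[
\E\big[\norm{(I-P_N)v(t)-(I-P_N)v(s)}_{\dot H^m}^2\big]\le C(\nu,m,m',T)\,N^{-2(m_\ast-m')}B_{m_\ast}\,(t-s)^{\gamma}
\]
for some small $\gamma>0$. Similarly, since $v(0)=0$, we get
\[
\E\big[\norm{(I-P_N)v(t)}_{\dot H^m}^2\big]\le \frac{1}{2\nu}\sum_{\abs{k}>N}\abs{k}^{2m-2}b_k^2\lesssim N^{-2(m_\ast-m)}.
\]

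The second step upgrades these pointwise-in-time bounds to the supremum over $[0,T]$. The process is Gaussian, so hypercontractivity lets the second-moment increment bound imply $p$-th moment bounds:
\[
\E\norm{X(t)-X(s)}^p\le C_p\big(N^{-2(m_\ast-m')}(t-s)^\gamma\big)^{p/2}.
\]
Choosing $p$ large so that $p\gamma/2>1$, the quantitative Kolmogorov continuity theorem (or Garsia--Rodemich--Rumsey, as in Proposition 3.16 of \cite{DaPrato_Zabczyk_2014}) gives
\[
\E\big[\sup_{t\le T}\norm{X(t)}_{\dot H^m}^p\big]\le C\,N^{-p(m_\ast-m')},
\]
since $X(0)=0$. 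The constant $C$ is linear in the moment constant, and this linearity is the point to check. Jensen's inequality then gives $\E[\norm{v_N-v}_{X_T^m}^2]\lesssim N^{-2(m_\ast-m')}$, and we may take any $\delta\in(0,1)$ with $\delta\le 2(m_\ast-m')$.

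An alternative route to the sup bound, if one wants to avoid Kolmogorov, is the factorization method of Da Prato--Kwapień--Zabczyk. Write the stochastic convolution as $\frac{\sin\pi\alpha}{\pi}\int_0^t(t-s)^{\alpha-1}e^{\nu(t-s)\partial_x^2}Y(s)\,ds$, then apply Hölder in time and a Gaussian $L^p$ bound on $Y$; the tail factor $N^{-2(m_\ast-m')}$ again appears linearly.

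The main obstacle is the second step: making sure the Kolmogorov/GRR constant scales exactly with the prefactor of the increment bound, so that the $N$-decay survives the passage to the supremum. This holds because the Gaussian moment equivalence is homogeneous. The rest is a routine repetition of the computation in Lemma~\ref{S_heat_wellposedness}.
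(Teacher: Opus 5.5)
Your proposal is correct and follows the same route as the paper. The paper bounds $\E\norm{(v_N-v)(t)}_{\dot H^m}^2\lesssim N^{-\delta}$ and $\E\norm{(v_N-v)(t)-(v_N-v)(s)}_{\dot H^m}^2\lesssim N^{-\delta}(t-s)^\gamma$ as in Lemma~\ref{S_heat_wellposedness}, then passes to the supremum over $[0,T]$ via the Kolmogorov test or Garsia--Rodemich--Rumsey; your write-up supplies the details the paper leaves implicit: trading $m_\ast-m'$ derivatives for the factor $N^{-2(m_\ast-m')}$, Gaussian moment equivalence, and linearity of the Kolmogorov/GRR constant in the moment bound, so the $N$-decay survives.
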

\begin{proof}
Let {$f_N(t)=v_{N}(t)-v(t)$.} 
It suffices to check that for small $\delta$ and $\gamma$, we have
$$
\E\left[\|f_N(t)\|^2_{\dot{H}^{m}}\right] \lesssim N^{-\delta}, \qquad \E\left[\|f_N(t)-f_N(s)\|^2_{\dot{H}^m}\right]
\lesssim N^{-\delta} (t-s)^{\gamma}.
$$
These estimates are verified as in Lemma 2.1. 
%\textcolor{blue}{(Before : Thus, $(v_N)$ is Cauchy in $L^2(\Omega; X_T^m)$, and there exists a limit $v \in L^2(\Omega; X_T^m)$ that should satisfy the linear equation.)%
We have the desired
estimate by the Kolmogorov test or the Garsia-Rodemich-Rumsey inequality.
\end{proof}
We can conclude the convergence almost surely by the above lemma. Indeed, 
it follows from the standard martingale inequality and hyper-contractivity (See Theorem 1.1 of \cite{nualart}) that 
\begin{eqnarray*}
\mathbb{P}(\|v_N-v\|_{X_T^m} >R) 
&\le& R^{-p} \sup_{t\in [0,T]} \E [\|v_N-v\|^p_{\dot{H}^m}] \\
&\lesssim &p^{\frac{p}{2}} R^{-p} 
(\E[\|v_N -v\|^2_{\dot{H}^m}])^{\frac{p}{2}}
\lesssim p^{\frac{p}{2}} R^{-p} 
N^{-\frac{\delta}{2} p}.
\end{eqnarray*}
Optimizing in $p$ implies
$$ \mathbb{P}(\|v_N-v\|_{X_T^m} >R)
\le C e^{-cR^2 N^{\delta}}.$$
For $M\ge 1$, we define 
$\Sigma_M \subset \Omega$ as
$$\Sigma_M =\left\{\eta\in \Omega \; \left\vert\; v \in X_T^m,  \forall N\ge 1, \|v_N-v\|_{X_T^m}\le M N^{-\frac{\delta}{2}}\right.\right\}.$$
Then, we have $$\mathbb{P}(\Omega \setminus \Sigma_M) \le C e^{-cM^2}.$$
Hence, $v_N \to v$ in $X^m_T$ on $\Sigma_M$. Finally, the Borel--Cantelli lemma shows $\mathbb{P}(\Sigma)=1,$ where
$\Sigma =\liminf_{M\to \infty} \Sigma_M$. 

Next, we establish the uniform energy estimates for $w^N$.

\begin{lemma}[A priori estimate for Galerkin approximation]\label{apriori}
Let $a = -2, 0<T<\infty$, $\nu>0, m\in\mathbb{N}$. Let $m_\ast \in \N$ be such that $m_\ast> m$. Assume that $B_{m_{\ast}}<+\infty$ and $\omega_0\in\dot{H}^m$ is $\mathcal{F}_0$-measurable. 
Then, the solution $w^N$ of \eqref{galerkin_ode} satisfies the following estimates for $t\in [0,T^N_{max} \wedge T]\colon$

\begin{equation}\label{L2_estimate_wN}
    \norm{w^N(t)}_{H}\leq C(\norm{v}_{X_T^1}, T, \norm{\omega_0}_{H}), 
\end{equation}
\begin{equation}\label{Hm_estimate_for_w}
    \norm{w^N (t)}_{\dot{H}^m}\leq C(\norm{v}_{X_T^m}, T, \norm{\omega_0}_{H}), 
\end{equation}
\begin{equation}\label{apriori_m+1_estimate}
    \norm{w^N(t)}_{\dot{H}^m}^2+\nu\int_0^t \norm{w^N(t)}_{\dot{H}^{m+1}}^2 ds \leq C(\nu, \norm{v}_{X_T^m}, T, \norm{\omega_0}_{\dot{H}^m}).
\end{equation}
\end{lemma}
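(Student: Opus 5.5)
The plan is an energy method on the Galerkin system \eqref{galerkin_ode}, with everything resting on the cancellation that holds exactly when $a=-2$. Write $W=P_Nw^N$ (equal to $w^N$, since the system lives on modes $|k|\le N$), $V=P_Nv$, and $\Omega=V+W$. Put $U=-(-\partial_x^2)^{-1/2}\Omega$, so that $U_x=\mathcal H(\Omega)$.

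\textbf{Step 1: the $L^2$ estimate.} Pairing \eqref{galerkin_ode} with $w^N$ gives
\[
\tfrac12\tfrac{d}{dt}\|w^N\|_H^2+\nu\|w^N\|_{\dot H^1}^2=\langle U_x\Omega+aU\Omega_x,\,W\rangle .
\]
For $a=-2$ we have, for smooth $\Omega$,
\[
\langle U_x\Omega-2U\Omega_x,\Omega\rangle=\int U_x\Omega^2+\int U_x\Omega^2=2\int U_x\Omega^2 .
\]
This is not zero, so I will not use the naive identity. Instead I will use the structure of the inviscid conserved quantity. For $a=-2$ the $L^2$ norm is conserved by the true dynamics $\omega_t-2u\omega_x-u_x\omega=0$, because
\[
\langle 2u\omega_x+u_x\omega,\omega\rangle=-\int u_x\omega^2+\int u_x\omega^2=0 .
\]
(The signs in \eqref{galerkin_ode} should be read accordingly.) Thus the nonlinear term $N(\Omega)$ satisfies $\langle N(\Omega),\Omega\rangle=0$, and $P_N$ is self-adjoint with $P_N\Omega=\Omega$. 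This gives
\[
\langle N(\Omega),W\rangle=-\langle N(\Omega),V\rangle .
\]
Integrating by parts moves the derivative onto $V$, so $|\langle N(\Omega),V\rangle|\lesssim \|\Omega\|_H^2\|V\|_{W^{1,\infty}}$. This uses that $\mathcal H$ is an $L^2$ isometry, that $\|U\|_{L^\infty}\lesssim\|\Omega\|_H$, and that $\|V\|_{W^{1,\infty}}\lesssim\|v\|_{\dot H^2}$; if only $\|v\|_{X^1_T}$ is to appear, I will instead use $\|U\|_{L^\infty}\|\Omega\|_H\|V_x\|_H$ together with $\|U_x\Omega\|_{L^1}\|V\|_{L^\infty}$. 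The result is
\[
\tfrac{d}{dt}\|w^N\|_H^2\le C\|v\|_{X_T^1}\big(\|w^N\|_H^2+\|v\|_{X_T^1}^2\big).
\]
Gronwall then yields \eqref{L2_estimate_wN}, uniformly in $N$ and valid up to $T^N_{max}\wedge T$.

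\textbf{Step 2: higher norms, by induction on $m$.} Pair \eqref{galerkin_ode} with $(-\partial_x^2)^m w^N$:
\[
\tfrac12\tfrac{d}{dt}\|w^N\|_{\dot H^m}^2+\nu\|w^N\|_{\dot H^{m+1}}^2=\langle \partial_x^{m}N(\Omega),\partial_x^m W\rangle .
\]
Expand by Leibniz. The top-order term $\int U\,\partial_x^{m+1}\Omega\,\partial_x^m\Omega$ is integrated by parts into $-\tfrac12\int U_x(\partial_x^m\Omega)^2$. Every remaining term contains at most $m$ derivatives on each factor. Using Gagliardo–Nirenberg, $\|\Omega\|_{L^\infty}\lesssim\|\Omega\|_H^{1/2}\|\Omega\|_{\dot H^1}^{1/2}$, with the Hilbert transform bounded on $\dot H^s$, I bound everything by
\[
C\|\Omega\|_{\dot H^{m}}^{2-\theta}\|\Omega\|_{\dot H^{m+1}}^{\theta}\big(\|\Omega\|_H+\cdots\big)
\]
for some $\theta<2$. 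Young's inequality absorbs $\tfrac\nu2\|w^N\|_{\dot H^{m+1}}^2$, and the $V$ contributions are controlled by $\|v\|_{X^m_T}$; where $V$ carries $m+1$ derivatives, I put that derivative back onto $W$ by integration by parts. What remains is
\[
\tfrac{d}{dt}\|w^N\|_{\dot H^m}^2+\nu\|w^N\|_{\dot H^{m+1}}^2\le C\big(\|w^N\|_{\dot H^{m-1}},\|v\|_{X^m_T}\big)\big(1+\|w^N\|_{\dot H^m}^2\big).
\]
The lower norm $\|w^N\|_{\dot H^{m-1}}$ is already bounded by the induction hypothesis, with Step 1 as the base case. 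Gronwall then gives \eqref{Hm_estimate_for_w}, and integrating in time gives \eqref{apriori_m+1_estimate}. I expect the constant in \eqref{Hm_estimate_for_w} to depend on $\|\omega_0\|_{\dot H^m}$ as well, or else on $\nu$ via parabolic smoothing; I will simply record it in the form of \eqref{apriori_m+1_estimate}.

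\textbf{Main obstacle.} The hard part is Step 1: arranging the exact cancellation for $a=-2$ at the Galerkin level. This must be checked with the precise placement of $P_N$, using that all factors lie in the range of $P_N$ up to the outer projection, and handling the cross terms with $V$ so that only $\|v\|_{X_T^1}$ enters. The $\dot H^1$ step, where the nonlocal term $U_x\Omega$ must be controlled through the $L^2$ bound, is the first place Gagliardo–Nirenberg is needed, and I would do that case explicitly before the general induction.
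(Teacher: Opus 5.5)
Your proposal is correct and has the same skeleton as the paper's proof: an $L^2$ bound from the $a=-2$ cancellation, then induction on $m$ with the viscous term absorbing the top norm. The bookkeeping differs in both steps.

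\textbf{Step 1.} You move the pairing onto $V$ via $\langle N(\Omega),\Omega\rangle_H=0$. The paper instead splits $u=u_v+u_w$ and kills $\langle \phi_x w^N+2\phi w^N_x,w^N\rangle_H$ for each piece. These are the same computation, because $b(\phi,f,g)=\langle \phi_x f+2\phi f_x,g\rangle_H$ is antisymmetric in $(f,g)$, so $b(U,\Omega,W)=b(U,V,W)=-b(U,\Omega,V)$. Nothing delicate happens at the Galerkin level, since $P_N$ is self-adjoint and $P_Nw^N=w^N$. No sign reinterpretation is needed either: for $a=-2$ the term $-aP_Nu(\cdot)_x$ in \eqref{galerkin_ode} is already $+2U\Omega_x$.

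\textbf{Step 2.} The paper does not expand by Leibniz. It integrates by parts once, writing the right-hand side as $-2\langle U\Omega,w^N_x\rangle_{\dot H^m}-\langle U_x\Omega,w^N\rangle_{\dot H^m}$. It then applies the algebra bound \eqref{Hm_product} with $\|U\|_{\dot H^m}=\|\Omega\|_{\dot H^{m-1}}$ and interpolates $\dot H^m$ between $\dot H^{m-1}$ and $\dot H^{m+1}$. After Young, the right-hand side involves only $\|w^N\|_{\dot H^{m-1}}$, so the $\dot H^m$ bound follows by integrating in time. This route is shorter for two reasons:
\begin{itemize}
\item $V$ never carries $m+1$ derivatives.
\item No top-order cancellation is needed, since the viscosity already absorbs $\|U\|_{L^\infty}\|w^N\|_{\dot H^m}\|w^N\|_{\dot H^{m+1}}$ with $\|U\|_{L^\infty}\lesssim\|\Omega\|_H$.
\end{itemize}
Your commutator route also works, but the cancellation is not merely unnecessary. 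At $m=1$ it produces $\|U_x\|_{L^\infty}\|w^N\|_{\dot H^1}^2\lesssim\|\Omega\|_H^{1/2}\|\Omega\|_{\dot H^1}^{1/2}\|w^N\|_{\dot H^1}^2$, which is superlinear in $\|w^N\|_{\dot H^1}^2$. To keep a linear Gronwall inequality you must first interpolate $\|w^N\|_{\dot H^1}\le\|w^N\|_H^{1/2}\|w^N\|_{\dot H^2}^{1/2}$ and only then apply Young.

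Your caveat about the constants is right. The paper's argument actually yields $C(v,\nu,T,\|\omega_0\|_{\dot H^m})$ in \eqref{Hm_estimate_for_w}. Dependence on $\|\omega_0\|_H$ alone is impossible already at $t=0$.
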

\begin{proof} 
In \eqref{galerkin_ode}, multiplying $w^N \in C^1([0,T];C^{\infty}(\mathbb{S}^1))$ \black and taking the integration with respect to $x$, 
since $P_N w^N =w^N$, we have
\begin{align}\label{L2_estimate}
	\frac{1}{2}\frac{d}{dt}\norm{w^N(t)}_{H}^{2} + \nu \norm{w^N(t)}_{\dot{H}^{1}}^{2}	= \iprod{(P_Nu)_{x}(P_Nv+w^N) + 2(P_Nu)(P_Nv+w^N)_{x},w^N}_{H}.
\end{align}
The function $u$ is divided into two parts, such as $u=u_v+u_w$ with $(P_Nu_{v})_x = \mathcal{H}(P_Nv)$ and $(P_Nu_{w}) _x=\mathcal{H}(w^N)$. Then, the inner product on the right-hand side becomes
\begin{align*}
  &\iprod{(P_N u)_{x}(P_N v + w^N) 
  + 2(P_N u)(P_N v + w^N)_{x},\, w^N}_{H} \\
  &= \iprod{(P_N u_v)_x (P_N v),\, w^N}_{H} 
  + \iprod{(P_N u_w)_x (P_N v),\, w^N}_{H} \\
  &\quad + \iprod{(P_N u_v)_x w^N,\, w^N}_{H} 
  + \iprod{(P_N u_w)_x w^N,\, w^N}_{H} \\
  &\quad + 2\iprod{(P_N u_v)(P_N v)_{x},\, w^N}_{H} 
  + 2\iprod{(P_N u_w)(P_N v)_{x},\, w^N}_{H} \\
  &\quad + 2\iprod{(P_N u_v) w^N_{x},\, w^N}_{H} 
  + 2\iprod{(P_N u_w) w^N_{x},\, w^N}_{H} \\
  &= \iprod{(P_N u_v)_x (P_N v),\, w^N}_{H} 
  + \iprod{(P_N u_w)_x (P_N v),\, w^N}_{H} \\
  &\quad + 2\iprod{(P_N u_v)(P_N v)_{x},\, w^N}_{H} 
  + 2\iprod{(P_N u_w)(P_N v)_{x},\, w^N}_{H}
\end{align*}
since
\begin{align*}
&\iprod{(P_N u_v)_x w^N, w^N}_{H}
+ 2\iprod{(P_N u_v) w^N_x, w^N}_{H} = 0, \\
&\iprod{(P_N u_w)_x w^N, w^N}_{H}
+ 2\iprod{(P_N u_w) w^N_x, w^N}_{H} = 0
\end{align*}
owing to integration by parts. In addition, {  by Proposition \ref{Hilbert} and}
the Sobolev embedding $\dot{H}^{1} \subset L^{\infty}$,
\begin{align*}
	&\abs{\iprod{(P_N u_v)_x(P_Nv),w^N}_{H}}
    \leq \norm{P_Nv}_{L^\infty} \norm{\mathcal{H}(P_Nv)}_{H} \norm{w^N}_{H} \leq \norm{P_Nv}_{\dot{H}^{1}} \norm{P_Nv}_{H} \norm{w^N}_{H}, \\
	&\abs{\iprod{(P_N u_v)(P_Nv)_{x},w^N}_{H}}
\leq \norm{P_Nu_v}_{L^\infty}\norm{P_N v}_{\dot{H}^1} \norm{w^N}_{H} \leq\norm{P_Nv}_{\dot{H}^{1}} \norm{P_Nv}_{H} \norm{w^N}_{H},
\end{align*}
where $\norm{P_Nu_v}_{\dot{H}^1} = \norm{(P_Nu_v)_x}_{H} = \norm{\mathcal{H}(P_Nv)}_{H} = \norm{P_Nv}_{H}$. Similarly, we have
\begin{align*}
	\abs{\iprod{(P_N u_w)_x(P_N v),w^N}_{H}} \leq \norm{P_Nv}_{\dot{H}^1} \norm{w^N}_{H}^{2}, \qquad	\abs{\iprod{(P_Nu_{w})(P_Nv)_{x},w^N}_{H}} \leq\norm{P_Nv}_{\dot{H}^1} \norm{w^N}_{H}^{2}.
\end{align*}
Noting $\norm{P_N v}_{H}\leq \norm{P_N v}_{\dot{H}^1}\leq \norm{v}_{\dot{H}^1}$, then \eqref{L2_estimate} becomes

\begin{equation*}
	\frac{1}{2}\frac{d}{dt}\norm{w^N(t)}_{H}^{2} \leq 	\frac{1}{2}\frac{d}{dt}\norm{w^N(t)}_{H}^{2} + \nu \norm{w^N(t)}_{\dot{H}^{1}}^{2} \leq C(\norm{v}_{X_{T}^{1}})  \norm{w^N(t)}_{H}+ C(\norm{v}_{X_{T}^{1}}) \norm{w^N(t)}_{H}^{2},
\end{equation*}

which yields $\norm{w^N(t)}_{H} \leq C(\norm{v}_{X_T^1},T, \norm{\omega_0}_{H})$ for $t \in [0,T^N_{max} \wedge T]$ by Gronwall's inequality.
We write $v_N$ instead of $P_N v$ as in the previous lemma.
Taking $\partial_x^m$ from equation \eqref{galerkin_ode} and taking the inner product $L^2$ with $\partial_{x}^{m}w^N$, we have
\begin{equation*}
	 \frac{1}{2}\frac{d}{dt} \norm{w^N(t)}_{\dot{H}^{m}}^{2} + \nu \norm{w^N(t)}_{\dot{H}^{m+1}}^{2}	= -2 \iprod{(P_Nu)(  v_N + w^N),w^N_x}_{\dot{H}^{m}} - \iprod{(P_Nu)_{x}(v_N + w^N), w^N}_{\dot{H}^{m}}.
\end{equation*}
We estimate the inner products on the right side using \eqref{Hm_product} and $\norm{u_v}_{\dot{H}^m}=\norm{\partial_x u_v}_{\dot{H}^{m-1}} = \norm{\mathcal{H}(v)}_{\dot{H}^{m-1}} = \norm{v}_{\dot{H}^{m-1}}$
as follows.
\begin{align*}
\abs{\iprod{(P_N u)(v_N + w^N), w^N_x}_{\dot{H}^{m}}} 
&\leq \norm{(P_N u)(v_N + w^N)}_{\dot{H}^{m}} 
      \norm{w^N}_{\dot{H}^{m+1}} \\
&= \norm{(P_N u_v + P_N u_w)(v_N + w^N)}_{\dot{H}^{m}} 
    \norm{w^N}_{\dot{H}^{m+1}} \\
&\leq \norm{P_N u_v + P_N u_w}_{\dot{H}^{m}} 
      \norm{v_N + w^N}_{\dot{H}^{m}} 
      \norm{w^N}_{\dot{H}^{m+1}} \\
&\leq (\norm{v}_{\dot{H}^{m-1}} + \norm{w^N}_{\dot{H}^{m-1}})
     (\norm{v}_{\dot{H}^{m}} + \norm{w^N}_{\dot{H}^{m}})
     \norm{w^N}_{\dot{H}^{m+1}} \\
&\leq C(v) \norm{w^N}_{\dot{H}^{m+1}} 
      + C(v) \norm{w^N}_{\dot{H}^{m}} \norm{w^N}_{\dot{H}^{m+1}} \\
&\quad + C(v) \norm{w^N}_{\dot{H}^{m-1}} \norm{w^N}_{\dot{H}^{m+1}} 
      + \norm{w^N}_{\dot{H}^{m-1}} \norm{w^N}_{\dot{H}^{m}} \norm{w^N}_{\dot{H}^{m+1}}, \\     
\\
\intertext{and}
\abs{\iprod{(P_N u)_{x}(v_N + w^N), w^N}_{\dot{H}^{m}}}
&\leq \norm{(P_N u)_{x}(v_N + w^N)}_{\dot{H}^{m}} 
      \norm{w^N}_{\dot{H}^{m}} \\
&\leq \norm{\mathcal{H}(v_N + w^N)}_{\dot{H}^{m}} 
      \norm{v_N + w^N}_{\dot{H}^{m}} 
      \norm{w^N}_{\dot{H}^{m}} \\
&\leq (\norm{v}_{\dot{H}^{m}} + \norm{w^N}_{\dot{H}^{m}})^2 
      \norm{w^N}_{\dot{H}^{m}} \\
&\leq C(v) \norm{w^N}_{\dot{H}^{m}} 
      + C(v) \norm{w^N}_{\dot{H}^{m}}^2 
      + \norm{w^N}_{\dot{H}^{m}}^3.
\end{align*}

Hence, we have
\begin{align*}
    &\frac{1}{2} \frac{d}{dt} \norm{w^N(t)}_{\dot{H}^{m}}^{2} 
    + \nu \norm{w^N(t)}_{\dot{H}^{m+1}}^{2} \\
    &\leq C(v) \big(
        \norm{w^N(t)}_{\dot{H}^{m+1}}
        + \norm{w^N(t)}_{\dot{H}^{m}} \norm{w^N(t)}_{\dot{H}^{m+1}} \\
    &\quad
        + \norm{w^N(t)}_{\dot{H}^{m-1}} \norm{w^N(t)}_{\dot{H}^{m+1}} 
        + \norm{w^N(t)}_{\dot{H}^{m-1}} \norm{w^N(t)}_{\dot{H}^{m}} \norm{w^N(t)}_{\dot{H}^{m+1}} \\
    &\quad
        + \norm{w^N(t)}_{\dot{H}^{m}}
        + \norm{w^N(t)}_{\dot{H}^{m}}^{2} 
        + \norm{w^N(t)}_{\dot{H}^{m}}^{3}
    \big).
\end{align*}

Sobolev's interpolation \eqref{Sobolev_interpolation} and Young's inequality yield 
\begin{align*}
    & \norm{w^N(t)}_{\dot{H}^{m+1}} \leq \frac{\nu}{14}\norm{w^N(t)}_{\dot{H}^{m+1}}^2 + C(\nu),\\
      &  \norm{w^N(t)}_{\dot{H}^{m}} \norm{w^N(t)}_{\dot{H}^{m+1}}
    \leq  \norm{w^N}_{\dot{H}^{m-1}}^{\frac{1}{2}} \norm{w^N}_{\dot{H}^{m+1}}^{\frac{3}{2}}
    \leq \frac{\nu}{14} \norm{w^N}_{\dot{H}^{m+1}}^{2} + C(\nu) \norm{w^N}_{\dot{H}^{m-1}}^{2}, \\
     & \norm{w^N(t)}_{\dot{H}^{m-1}} \norm{w^N(t)}_{\dot{H}^{m+1}}
    \leq \frac{\nu}{14} \norm{w^N}_{\dot{H}^{m+1}}^{2} + C(\nu) \norm{w^N}_{\dot{H}^{m-1}}^{2}, \\
    & \norm{w^N(t)}_{\dot{H}^{m-1}} \norm{w^N(t)}_{\dot{H}^{m}} \norm{w^N(t)}_{\dot{H}^{m+1}}
    \leq \frac{\nu}{14} \norm{w^N}_{\dot{H}^{m+1}}^{2} + C(\nu) \norm{w^N}_{\dot{H}^{m-1}}^{6}, \\
    &\norm{w^N}_{\dot{H}^{m}} \leq  \norm{w^N}_{\dot{H}^{m-1}}^{\frac{1}{2}} \norm{w^N}_{\dot{H}^{m+1}}^{\frac{1}{2}}
    \leq \frac{\nu}{14} \norm{w^N}_{\dot{H}^{m+1}}^{2} + C(\nu) \norm{w^N}_{\dot{H}^{m-1}}^{\frac{2}{3}}, \\
    & \norm{w^N(t)}_{\dot{H}^{m}}^{2} 
    \leq  \norm{w^N}_{\dot{H}^{m-1}} \norm{w^N}_{\dot{H}^{m+1}}
    \leq \frac{\nu}{14} \norm{w^N}_{\dot{H}^{m+1}}^{2} + C(\nu) \norm{w^N}_{\dot{H}^{m-1}}^{2}, \\ 
    &  \norm{w^N(t)}_{\dot{H}^{m}}^{3} 
    \leq  \norm{w^N}_{\dot{H}^{m-1}}^{\frac{3}{2}} \norm{w^N}_{\dot{H}^{m+1}}^{\frac{3}{2}}
    \leq \frac{\nu}{14} \norm{w^N}_{\dot{H}^{m+1}}^{2} + C( \nu) \norm{w^N}_{\dot{H}^{m-1}}^{6}.
\end{align*}
We finally obtain 
\begin{equation}\label{dif_estimate_wN}
\begin{aligned}
  &\frac{1}{2} \frac{d}{dt} \norm{w^N(t)}_{\dot{H}^{m}}^{2}
  + \frac{\nu}{2} \norm{w^N(t)}_{\dot{H}^{m+1}}^{2}\\
  &\leq\  C(\nu, v) \big(
      \norm{w^N(t)}_{\dot{H}^{m-1}}^{2/3}
    + \norm{w^N(t)}_{\dot{H}^{m-1}}^{2}
    + \norm{w^N(t)}_{\dot{H}^{m-1}}^{6}
  \big) 
   + C\big(\nu, \norm{v}_{X_T^m}\big).
\end{aligned}
\end{equation}
\black
For $m=1$, it follows from $\norm{w^N(t)}_{L^{2}} \leq C(\norm{v}_{X_{T}^{1}}, T, \norm{\omega_0}_{H})$ and Gronwall's inequality that we have the estimate $\norm{w^N(t)}_{\dot{H}^{1}} \leq C(v, \nu, T, \norm{\omega_0}_{\dot{H}^1})$.
Repeating this process inductively for $m\geqq 2$, we have an a priori estimate $\norm{w^N(t)}_{\dot{H}^{m}} \leq C(v,\nu, T, \norm{\omega_0}_{\dot{H}^m})$ in $\dot{H}^m$. 
The estimate \eqref{apriori_m+1_estimate} is obtained by integrating \eqref{dif_estimate_wN} and using \eqref{Hm_estimate_for_w}.
\end{proof}

Now we see that the local-in-time solution obtained in Theorem \ref{NL_local} satisfies the a priori estimate \eqref{Hm_estimate_for_w}, allowing $N$ to go to infinity.
Thus, the local-in-time solution can be extended to the global solution. 
Using the same arguments as in Theorem 2.2, we conclude that a short time $T^\ast_N >0$ and the existence of a unique solution $w^N$ are ensured. By Lemma \ref{v^N_conv} and the continuity of $T_N^\ast$ with respect to $v_N$, we have $T_N^\ast \to T^\ast$ a.s. 
Moreover, for $t< T_N^\ast$, $\|w^N\|_{X^m_t} \le M^\ast_N$. 
Since $M^\ast$ is continuous with respect to $v$, we have $\|w^N\|_{X^m_t} \le 2M^\ast$ for large $N$ and $t< T_N^\ast$ similarly to the proof of Theorem 2.2. 
Thus, for $t < T_{max} \wedge T_N^\ast,$ we have
\begin{eqnarray*}
\|w^N -w\|_{X_t^m} 
\lesssim t^\theta C(\nu, m,  \|v\|_{X_1^m}, \|\omega_0\|_{\dot{H}^m})
\{\|\omega_0^N-\omega_0\|_{\dot{H}^m}+\|v_N-v\|_{X^m_1} +\|w^N-w\|_{X_t^m}\}
\end{eqnarray*}
for some $\theta>0$. 
Consequently, by the convergence of $v^N$,  we can choose a small $t^\ast<t$ such that $\| w^N - w\|_{X_s^m} \to 0$ for each $s\le t^\ast$. 
Repetition of this procedure on $[0,t^\ast],[t^\ast,2t^\ast],\cdots$, we have the convergence on $[0,t]$.

Let $w(t)\in \dot{H}^m$ be the local mild solution for $t<T_{max}$ for the initial data $\omega_0\in \dot{H}^m$ obtained in Theorem \ref{NL_local}. 
By convergence $\|w^N - w\|_{X_t^m} \to 0$, $w^N$ exists on $[0,t]$, and thus we have $t < T_{max}^N$ for large $N$. 
Therefore, $T_{max} \le \liminf_{N\to \infty} T_{max}^N,$ and taking the limit in \eqref{Hm_estimate_for_w}, we have
\[
    \norm{w(t)}_{\dot{H}^m}\leq C(\norm{v}_{X_{T}^m},\norm{\omega_0}_{H}, T)  
\]
for $t\in[0, T_{max} \wedge T]$.

From the results obtained so far, we can establish the existence of a global solution to the gCLMG equation \eqref{gCLMG_eq_r}. Moreover, we consider a projected equation
\begin{equation}
\partial_{t}\omega - \nu \omega_{xx} - P_{N}((P_N u)_{x} P_N\omega -a P_N u (P_N\omega)_{x})=P_N \xi_{t},\quad (P_N u)_{x} = \mathcal{H}(P_N \omega),\quad \omega(0)=P_{N}\omega_0.
\label{gCLMG_eq_proj}
\end{equation}

\begin{theorem}\label{sgclmg_global}
Let $\nu >0$ $m\in \N$ and $a=-2$. Let $m^\ast\in \N$ be such that  $m_\ast>m$. Assume that $B_{m_{\ast}}<+\infty$ and $\omega_{0} \in \dot{H}^{m}$ is $\mathcal{F}_0$-measurable. Then, the equation \eqref{gCLMG_eq_r}, 
\begin{align}\label{SPDE_gCLMG}
\omega_{t} +a u\omega_{x} - u_{x} \omega  - \nu \omega_{xx} = \xi_t, \quad u_x = \mathcal{H}(\omega), \quad \omega(0) = \omega_{0}
\end{align}
has a unique $(\mathcal{F}_t)_{t\ge 0}$-adapted solution $\omega\in  X_\infty^m$ a.s. 
Moreover, for each $N \in \N$, there exists a unique solution to \eqref{gCLMG_eq_proj}, 
 denoted by $\omega^N$. In addition, it converges to the solution $\omega$ of the
stochastic gCLMG equation \eqref{gCLMG_eq_r} in $X_{T}^{m}$.
\end{theorem}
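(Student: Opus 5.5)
The plan is to assemble the pieces already established: the decomposition $\omega = v + w$, Lemma \ref{S_heat_wellposedness} for $v$, Theorem \ref{NL_local} for local existence of $w$, and the a priori bound \eqref{Hm_estimate_for_w} passed to the limit as in the discussion following Lemma \ref{apriori}.

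\textbf{Step 1: global existence of $w$.} Fix $T>0$. By Lemma \ref{S_heat_wellposedness}, $v\in X_T^m$ almost surely; it is adapted and $\|v\|_{X_T^m}<\infty$ a.s. By Theorem \ref{NL_local}, \eqref{NL_term_eq} has a unique adapted mild solution $w$ on $[0,T_{max})$. If $T_{max}\le T$, the bound $\|w(t)\|_{\dot H^m}\le C(\|v\|_{X_T^m},\|\omega_0\|_{\dot H^m},T)$ obtained above for $t<T_{max}\wedge T$ contradicts the blow-up alternative $\lim_{t\uparrow T_{max}}\|w(t)\|_{\dot H^m}=\infty$. Hence $T_{max}>T$. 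Since $T$ is arbitrary, running over $T\in\N$ and intersecting countably many full-measure events gives $w\in X_\infty^m$ a.s.

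\textbf{Step 2: the solution $\omega$.} Set $\omega=v+w$. Adding the mild formulations of \eqref{S_heat_eq} and \eqref{NL_term_eq}, and using $u=-(-\partial_x^2)^{-1/2}\omega$, yields exactly the mild formulation in the Definition. Adaptedness is inherited: $v$ is adapted, and $w$ is a fixed point of $\Psi$ built from adapted data, hence a limit of adapted Picard iterates.

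For uniqueness, let $\tilde\omega\in X_T^m$ be another adapted mild solution. Then $\tilde w:=\tilde\omega-v$ is a mild solution of \eqref{NL_term_eq} with the same initial datum. The uniqueness argument in Theorem \ref{NL_local}, iterated on short intervals as done there, shows $\tilde w=w$ on $[0,T]$.

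\textbf{Step 3: the projected equation and convergence.} For fixed $N$, \eqref{gCLMG_eq_proj} is, on the range of $P_N$, a finite-dimensional SDE with locally Lipschitz quadratic drift and additive noise. Writing $\omega^N=P_Nv+w^N$ reduces it to the random ODE \eqref{galerkin_ode}, which has a local solution. One checks that $P_Nv$ solves the projected heat equation and that the complementary modes of $\omega^N$ vanish. The bound \eqref{Hm_estimate_for_w}, which is uniform in $N$, excludes blow-up, so $T^N_{max}=\infty$; uniqueness follows from the local Lipschitz property.

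Convergence $\omega^N\to\omega$ in $X_T^m$ follows from
\[
\|\omega^N-\omega\|_{X_T^m}\le \|P_Nv-v\|_{X_T^m}+\|w^N-w\|_{X_T^m}.
\]
The first term tends to $0$ a.s. on $\Sigma$ by Lemma \ref{v^N_conv} and the Borel--Cantelli argument. The second tends to $0$ by the stepping argument after Lemma \ref{apriori}, which now applies on all of $[0,T]$ because $T_{max}=\infty$.

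\textbf{Main obstacle.} The delicate point is making that stepping argument uniform. The step length $t^\ast$ must depend only on quantities bounded uniformly in $N$ and in the starting time: $\sup_N\|w^N\|_{X_T^m}$ and $\|w\|_{X_T^m}$, both controlled by \eqref{Hm_estimate_for_w}, and $\sup_N\|P_Nv\|_{X_T^m}\le\|v\|_{X_T^m}$. Only then do finitely many steps cover $[0,T]$. The term $\|\omega_0^N-\omega_0\|$ in each step is replaced by the error accumulated at the previous endpoint, and this error goes to $0$ inductively. The thing to verify is that the Lipschitz constant from the contraction estimate depends only on these uniform bounds; I expect it does, since it is polynomial in them.
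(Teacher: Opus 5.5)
Your proposal is correct and follows essentially the paper's route. You split $\omega=v+w$ and obtain global existence of $w$ from the blow-up alternative of Theorem~\ref{NL_local}, using the $N$-uniform bound \eqref{Hm_estimate_for_w} transferred to $w$ through the Galerkin convergence; you then set $\omega^N=P_Nv+w^N$ and conclude from $P_Nv\to v$ and $w^N\to w$. Your choice of a step length from $N$-uniform quantities is more careful than the paper's sketch. One extra term should be carried in each step (the paper omits it too): the truncation error $\int e^{\nu(t-s)\partial_x^2}(I-P_N)\{-a(u(v+w))_x+(1+a)u_x(v+w)\}\,ds$. It tends to $0$ in $X_T^m$, because $s\mapsto u(v+w)(s)$ and $s\mapsto u_x(v+w)(s)$ are continuous into $\dot H^m$ and so have uniformly small high-frequency tails.
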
 

\begin{proof} 
Let $v$ denote the solution to equation \eqref{S_heat_eq} established by Lemma~\ref{S_heat_wellposedness}. 
Set $v^{N} = P_{N}v$. 
Then, $v^{N}$  is the solution to $v_t^N - \nu {v}_{xx}^N = P_N \xi_t$ and $v^N(0) = 0$. 
For this solution $v^N$, we have the solution $w^{N}$ to \eqref{galerkin_ode} and it holds that 
\begin{equation}\label{wn_T_conv}
   \lim_{N \to \infty} \norm{w-w^N}_{X_{T}^m}\to 0
\end{equation} as in the previous section.
By Lemma 2.4, $v^N$ converges to $v$ in $X_T^m$ almost surely. 
Hence, $\omega^N := v^N + w^N$ is the solution to \eqref{gCLMG_eq_proj}, which is unique by the same arguments as in Theorem~\ref{NL_local} and Lemma~\ref{thm:conti_dependence}. 
Thus, it converges to the solution $\omega=v+w$ to the stochastic gCLMG equation \eqref{gCLMG_eq_r} in $X_{T}^{m}$ a.s.
\end{proof}

\section{Existence of an invariant measure} 

We obtain a uniform $\dot{H}^m$ estimate for the solution, thereby showing the existence of an invariant measure for the stochastic gCLMG equation \eqref{gCLMG_eq_r} by the Krylov-Bogoliubov argument in a similar way to Boritchev \& Kuksin \cite{Boritchev_Kuksin_2021_onedimensional}. 
We denote the solution of \eqref{gCLMG_eq_r} with a (deterministic) initial data $y \in \dot{H}^m$ by $\omega(t; y)$. Let $(P_t)_{t\ge0}$ be the transition semigroup for \eqref{gCLMG_eq_r} with $a=-2$, which is defined by
\[
    P_t f (y) = \E \left[f(\omega(t; y))\right], \quad  f \in B_b(\dot{H}^m),\quad y\in \dot{H}^m.
\]

%\begin{remark}
%    Due to \eqref{conti_dependence}, it is easy to check that the %solution $\omega$ has Markov property.
%\end{remark}

%Let $\mu_{\xi}$ and $\mu_{\omega_0}$ be the distributions of $\xi$ and %$\omega_0$ respectively. Remark that the solution $\omega (t)$ depends %on $\xi$ and $\omega_0$, and thus we write the solution map $\omega(t, %\eta) =\mathcal{M}_t (\omega_0(\eta_1), \xi ({\eta_2}))$. 
%Since $\xi$ and $\omega_0$ are %independent, 
%For nonnegative measurable functions
%$f\colon \dot{H}^m \to \R$, Fubini's Theorem yields the following result.
%\begin{eqnarray}\label{Fubini}
%\E[f(\omega(t))] &=& 
%\int \int f(\mathcal{M}_t(y, \zeta)) \mu_{\omega_0}(dy)\mu_{\xi}(d\zeta) %\\
%&=&
%\int \mu_{\omega_0}(dy) 
%\int f(\mathcal{M}_t(y, \zeta)) 
%\mu_{\xi}(d\zeta) \\
%&=&
%\int_{\dot{H}^m} \E_{\xi}[f(\omega(t;y))] \mu_{\omega_0}(dy).
%\end{eqnarray}

We consider a random initial data $\omega_0$, which is $\mathcal{F}_0$-measurable whose law is $\mu_0$.
Since we assume that $\sigma\{\beta_\tau-\beta_\sigma: \tau\ge \sigma\ge s\}$ is independent of $\mathcal{F}_s$ for any $s\ge 0$, the solution of \eqref{gCLMG_eq_r} has the Markov property. 
%初期値が決まれば未来が決まる
%\footnote{For a random variable $X$, we denote by $\sigma(X)$ the $\sigma$-algebra generated by $X$.}% 
Thus, for $\omega_0$, which is  $\mathcal{F}_0$-measurable, we have
$$P_t f(\omega_0) =\E[f(\omega(t;\omega_0))| {\sigma(\omega_0)}]\qquad \mathbb{P}\text{-a.s.}$$
%\sigma(\omega_0) is a \sigma-field generated by \omega_0
and 
\begin{align*}
\E[f(\omega(t;\omega_0))] 
&= \E[\E[f(\omega(t;\omega_0))|\sigma(\omega_0)]] =\E[P_t f (\omega_0)] \\
 &= { \int_{\dot{H}^m} P_t f (x) \mu_0(dx)= \int_{\dot{H}^m} \E(f(\omega(t;x)) \mu_0(dx)}.
\end{align*}

Hence, it is sufficient to establish the estimates (like  \eqref{Hm_expectation_estimate} and \eqref{Hm_integration_estimate} below) for non-random initial data $\omega_{0} \in \dot{H}^m$ and then take the expectation with respect to the distribution $\mu_0$. 
In what follows, we deal with initial deterministic data $\omega_0 \in \dot{H}^m$. 
\vspace{3mm}

%We fix $m\in\mathbb{N}$.
\begin{theorem}\label{Stochastic_Hm_estimate} 
Let $m\in \N$, $a = -2, \nu >0$ and let $m_\ast\in \N$ with $m_\ast>m$ such that $B_{m^\ast} <\infty$. Let $\omega_0 \in \dot{H}^m$. 
%Assume also that $\E[e^{\sigma \norm{\omega_0}_{H}^2}]<\infty$ 
For $\sigma$ with $\sigma B_0 \le \nu$,   
%and $\E[\norm{\omega_{0}}_{\dot{H}^{m}}^{2}] < \infty$.Then 
there exist constants $C(m,\nu,\sigma)$ and $ C'(m,\nu,\sigma)>0$ such that the solution to equation \eqref{gCLMG_eq_r} satisfies 
\begin{equation}\label{Hm_expectation_estimate}
\E[\norm{\omega(t)}_{\dot{H}^m}^2] \leq C (1+\norm{\omega_0}_{\dot{H}^m}^2+ e^{\sigma^\prime \norm{\omega_0}_{H}^2})
\end{equation} and
\begin{equation}\label{Hm_integration_estimate}
\E\left[\int_0^t\norm{\omega(s)}_{\dot{H}^{m+1}}^2 ds\right] \leq \frac{1}{\nu}\norm{\omega_0}_{\dot{H}^m}^2 + t C'\left(1+ e^{\sigma^\prime\norm{\omega_0}_{H}^2}\right)
\end{equation}
\black for any $t\geqq 0$, where $\sigma^\prime \coloneqq \sigma/3$.
\end{theorem}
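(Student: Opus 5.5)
We will work throughout with the Galerkin system \eqref{gCLMG_eq_proj}. Its solutions $\omega^N$ are finite-dimensional It\^o processes, so It\^o's formula applies directly. At the end we pass to $N\to\infty$ using Theorem~\ref{sgclmg_global} together with Fatou's lemma. The plan has three layers.

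\emph{Step 1: exponential moment of the $L^2$ norm.} For $a=-2$ the nonlinearity is orthogonal to $\omega$ in $H$, by the integration by parts already used in Lemma~\ref{apriori}. It\^o's formula therefore gives
\[
d\norm{\omega}_H^2 + 2\nu\norm{\omega}_{\dot H^1}^2\,dt = B_0\,dt + 2\langle\omega, d\xi\rangle.
\]
Next we apply It\^o to $e^{\sigma\norm{\omega}_H^2}$. The quadratic variation term is $2\sigma^2 e^{\sigma\norm{\omega}_H^2}\sum_k b_k^2\omega_k^2 \le 2\sigma^2 B_0\norm{\omega}_H^2 e^{\sigma\norm{\omega}_H^2}$. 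Using $\norm{\omega}_{\dot H^1}\ge\norm{\omega}_H$ (Poincar\'e) and $\sigma B_0\le\nu$, this is absorbed by half of the dissipation. We obtain
\[
\frac{d}{dt}\E e^{\sigma\norm{\omega}_H^2} \le \E\bigl[\sigma e^{\sigma\norm{\omega}_H^2}(B_0-\nu\norm{\omega}_H^2)\bigr] \le -c\,\E e^{\sigma\norm{\omega}_H^2}+C.
\]
The first-moment argument must be localized with stopping times, since $\sigma$-moments are needed before they are known to be finite. This yields $\sup_t\E e^{\sigma\norm{\omega(t)}_H^2}\le e^{\sigma\norm{\omega_0}_H^2}+C$. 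It also yields the time-integrated bound
\[
\E\int_0^t \norm{\omega}_{\dot H^1}^2 e^{\sigma\norm{\omega}_H^2}\,ds\le C(e^{\sigma\norm{\omega_0}_H^2}+t).
\]
Applying the same computation with $\sigma'=\sigma/3$ in place of $\sigma$ makes all polynomial moments $\E\norm{\omega}_H^{2p}$ bounded by $C_p(1+e^{\sigma'\norm{\omega_0}_H^2})$, uniformly in time.

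\emph{Step 2: $\dot H^m$ energy inequality.} We apply It\^o's formula to $\norm{\omega}_{\dot H^m}^2$. Repeating the computation behind \eqref{dif_estimate_wN}, now with $v\equiv0$ and the noise entering through the It\^o correction $B_m\,dt$, gives
\[
d\norm{\omega}_{\dot H^m}^2+\nu\norm{\omega}_{\dot H^{m+1}}^2dt\le \bigl(C(\nu)\,\mathcal N(\norm{\omega}_{\dot H^{m-1}})+B_m\bigr)dt+dM_t,
\]
where $\mathcal N$ is a polynomial and $M$ is a martingale. The polynomial $\mathcal N$ is a sum of powers of $\norm{\omega}_{\dot H^{m-1}}$, obtained from \eqref{Hm_product}, \eqref{Sobolev_interpolation} and Young's inequality exactly as in Lemma~\ref{apriori}. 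Since $\norm{\omega}_{\dot H^{m+1}}\ge\norm{\omega}_{\dot H^m}$, we get a differential inequality $\frac{d}{dt}y+\nu y\le C\,\E\mathcal N(\norm{\omega}_{\dot H^{m-1}})+B_m$ for $y(t)=\E\norm{\omega(t)}_{\dot H^m}^2$. Gronwall then gives $y(t)\le\norm{\omega_0}_{\dot H^m}^2+C\sup_s\E\mathcal N(\cdot)$. Integrating the same inequality in time and dropping $\norm{\omega(t)}_{\dot H^m}^2\ge0$ gives \eqref{Hm_integration_estimate}, provided $\E\mathcal N(\norm{\omega(s)}_{\dot H^{m-1}})\le C(1+e^{\sigma'\norm{\omega_0}_H^2})$ uniformly in $s$.

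\emph{Step 3: induction on $m$ (the main obstacle).} For $m=1$, $\mathcal N$ is a polynomial in $\norm{\omega}_H$, so Step 1 suffices. For $m\ge2$, however, we need \emph{high} moments of $\norm{\omega}_{\dot H^{m-1}}$, not just second moments. The plan is to strengthen the induction hypothesis to $\sup_t\E\norm{\omega(t)}_{\dot H^{j}}^{2p}\le C_{j,p}(1+\norm{\omega_0}_{\dot H^j}^{2p}+e^{\sigma'\norm{\omega_0}_H^2})$ for all $p$. To prove it, we apply It\^o to $\norm{\omega}_{\dot H^j}^{2p}$. Each nonlinear contribution is controlled by interpolating $\dot H^j$ between $H$ and $\dot H^{j+1}$ (Gagliardo--Nirenberg) and absorbing into the dissipation $\nu\norm{\omega}_{\dot H^j}^{2p-2}\norm{\omega}_{\dot H^{j+1}}^2$. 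What remains are pure powers of $\norm{\omega}_H$, whose moments are finite by Step 1.

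The delicate point is the exponent bookkeeping. We must check that the interpolation exponents make the cubic term subcritical, so that Young's inequality leaves only a polynomial in $\norm{\omega}_H$. That polynomial must then be dominated by $e^{\sigma'\norm{\omega}_H^2}$. This is why the statement uses $\sigma'=\sigma/3<\sigma$: the polynomial factors cost a fraction of the exponent. Finally, we pass from the Galerkin level to $\omega$ by Theorem~\ref{sgclmg_global} and Fatou's lemma.
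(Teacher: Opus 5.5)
There is a genuine gap. The estimate that carries the whole proof is the one you defer as ``the delicate point,'' and it does not follow from the bounds you use in Step~2. Moreover, the Step~2/Step~3 induction, as wired, yields a weaker bound than the statement.

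If you bound the nonlinear term as in Lemma~\ref{apriori}, via \eqref{Hm_product}, you get $\|\omega\|_{\dot H^{m-1}}\|\omega\|_{\dot H^{m}}\|\omega\|_{\dot H^{m+1}}+\|\omega\|_{\dot H^{m}}^{3}$. Interpolating the lower norms between $H$ and $\dot H^{m+1}$ then gives $\|\omega\|_H^{3/(m+1)}\|\omega\|_{\dot H^{m+1}}^{3m/(m+1)}$. This is critical for $m=2$ and supercritical for $m\ge 3$, so it cannot be absorbed into the dissipation with only $\|\omega\|_H$ left over.

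The paper uses a finer splitting. It writes $|\langle\partial_x\omega,u\omega\rangle_{\dot H^m}|\le\|\omega\|_{\dot H^{m+1}}\|\partial_x^m(u\omega)\|_H$, expands by Leibniz, and applies H\"older with exponents $2m/k$ and $2m/(m-k)$. It then applies \eqref{GN_ineq} to each factor between $H$ and $\dot H^{m+1}$. The exponents sum to $\theta(k)+\theta'(k)=1-\frac{1}{2(m+1)}$, so both trilinear terms are bounded by $C\|\omega\|_{\dot H^{m+1}}^{2-\frac{1}{2(m+1)}}\|\omega\|_H^{1+\frac{1}{2(m+1)}}$. Young's inequality turns this into $\nu\|\omega\|_{\dot H^{m+1}}^2+C\|\omega\|_H^{4m+6}$, and $\E\|\omega\|_H^{4m+6}\le C\,\E e^{\sigma'\|\omega\|_H^2}$.

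With that estimate, It\^o's formula for $\|\omega^N\|_{\dot H^m}^2$ alone gives \eqref{Hm_expectation_estimate} and \eqref{Hm_integration_estimate} directly; this is your Step~3 at $p=1$, $j=m$. No induction on $m$ and no higher moments are needed.

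The induction is not just superfluous; it produces the wrong right-hand side. Step~2 needs $\E\,\mathcal N(\|\omega(s)\|_{\dot H^{m-1}})\le C(1+e^{\sigma'\|\omega_0\|_H^2})$ uniformly in $s$. This already fails at $s=0$, since $\mathcal N$ contains the sixth power $\|\omega_0\|_{\dot H^{m-1}}^6$. Your own induction hypothesis at level $m-1$ only provides $C(1+\|\omega_0\|_{\dot H^{m-1}}^{6}+e^{\sigma'\|\omega_0\|_H^2})$. Following Steps~2--3, you would end with an extra $\|\omega_0\|_{\dot H^{m-1}}^{6}$ in both estimates. For $m\ge2$ this is not dominated by $1+\|\omega_0\|_{\dot H^m}^2+e^{\sigma'\|\omega_0\|_H^2}$: take $\omega_0=e_k$ with $k\to\infty$.

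There is also a smaller slip in Step~1. Under $\sigma B_0\le\nu$, your bound $2\sigma^2B_0\|\omega\|_H^2e^{\sigma\|\omega\|_H^2}$ for the It\^o correction is dominated only by the full dissipation term $2\nu\sigma\|\omega\|_{\dot H^1}^2e^{\sigma\|\omega\|_H^2}$, not by half of it. So the argument gives no decay at exponent $\sigma$. With $\sigma'=\sigma/3$ the drift is at most $\sigma'e^{\sigma'\|\omega\|_H^2}(B_0-\frac{4\nu}{3}\|\omega\|_H^2)$, which leads to the paper's \eqref{L2_expectation_estimate}. That is the reason for $\sigma'<\sigma$, not the cost of polynomial factors: any power of $\|\omega\|_H$ is dominated by $e^{\sigma'\|\omega\|_H^2}$ for every $\sigma'>0$.
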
 

\begin{proof}
%Let $\mu_{\xi}$ and $\mu_{\omega_0}$ be the distributions of $\xi$ and $\omega_0$ respectively.  Since $\xi$ and $\omega_0$ are independent,  for non-negative measurable functions
%$f\colon \dot{H}^m \to \R$, Fubini's theorem yields
%\begin{equation}\label{Fubini}
%\E[f(\omega(t;\omega^\eta_0,\xi^\eta))] %= \int_{\dot{H}^m} \E[f(\omega(t;y))] %\mu_{\omega_0}dy.
%\end{equation}
%Hence, it is sufficient to establish the estimate \eqref{Hm_expectation_estimate} and \eqref{Hm_integration_estimate} for non-random initial data $\omega_{0} \in \dot{H}^m$ and then to take the expectation.

For $\omega_0 \in \dot{H}^m$, it follows from Theorem~\ref{sgclmg_global} that there exists a solution $\omega^N$ to the equation
\[
\partial_t\omega^N - \nu \omega_{xx}^N - P_N(u_x^N \omega^N + 2u^N \omega_x^N)= P_N \xi_t,\quad u^N_x = \mathcal{H}(\omega^N), \quad	\omega^N(0)=P_N\omega_0.
\]
Remark that the solution satisfies $\iprod{f_N(\omega^N),\omega^N}_{H} = - \nu \snorm{\omega^N}_{\dot{H}^1}^2$ for $f_N(\omega^N) = \nu \omega_{xx}^N + P_N(u_x^N\omega^N + 2u^N\omega_x^N)$.
Hence, we proceed in a similar manner to the proof of Theorem 1.4.4. in Boritchev-Kuksin~\cite{Boritchev_Kuksin_2021_onedimensional}.
Applying It\^{o}'s formula to  $G(\omega^N) = \snorm{\omega^N}_{\dot{H}^m}^2$, we have 
\begin{eqnarray}  \nonumber
\E[\snorm{\omega^N(t)}_{\dot{H}^m}^2]&=& \snorm{\omega^N(0)}_{\dot{H}^m}^2 -2\nu \int_0^t \E[\snorm{\omega^N(s)}_{\dot{H}^{m+1}}^2]ds	\\
&& + 2\int_0^t \E[\iprod{\omega^N,2\partial_x(u^N\omega^N) -  u_x^N\omega^N}_{\dot{H}^m}]ds	+ B_m^N t, 
	\label{td-Hm}
\end{eqnarray}
where $B_m^N \coloneqq \sum_{0 <\abs{k}\leq N} \abs{k}^{2m} b_k^2$. 
Multiplying the factor $e^{\nu t}$, we have again, by the It\^{o} formula, 
\begin{eqnarray} \label{td-Hm-exp}
\E[\snorm{\omega^N(t)}_{\dot{H}^m}^2]&=& e^{-\nu t}\snorm{\omega^N(0)}_{\dot{H}^m}^2 
+\nu \int_0^t e^{-\nu(t-s)}\E[\snorm{\omega^N(s)}_{\dot{H}^{m}}^2]ds \\
&& -2\nu \int_0^t e^{-\nu(t-s)}\E[\snorm{\omega^N(s)}_{\dot{H}^{m+1}}^2]ds	\nonumber \\
&& + 2\int_0^t e^{-\nu(t-s)}\E[\iprod{\omega^N,2\partial_x(u^N\omega^N) - u_x^N\omega^N}_{\dot{H}^m}]ds	+ \frac{B_m^N}{\nu}(1-e^{-\nu t}). \nonumber
\end{eqnarray}
Here, another quantity will also be needed. Again, applying It\^{o}'s formula to 
$F(\omega^N) = e^{\sigma^\prime \snorm{\omega^N(t)}_{H}^2}$, we obtain 
\begin{eqnarray} \label{eq:Ito_exp}
    \E \left[e^{\sigma^\prime \snorm{\omega^N(t)}_{H}^2} \right]
    &=& e^{\sigma^\prime \snorm{\omega^N(0)}_{H}^2}
    +\E \left[\int_0^t \sigma^\prime e^{\sigma^\prime \snorm{\omega^N(s)}_{H}^2} 
    \{-2\nu\snorm{\omega^N(s)}_{\dot{H}^{1}}^2
      +B_0^N\}ds \right] \\
    && 
    + 2 (\sigma^\prime)^2 \E \left[\int_0^t e^{\sigma^\prime \snorm{\omega^N(s)}_{H}^2} \sum_{|k|\le N} b_k^2 \langle \omega(s), e_k \rangle_{H}^2 ds\right].\nonumber
\end{eqnarray}
Here, 
\begin{equation*}
2 \sigma' \sum_{k\in\mathbb{Z}^\ast}b_k^2\iprod{\omega(t),e_k}_H^2\leq 
2 \sigma' B_0 \norm{\omega(t)}_{H}^2. 
\end{equation*}
Using Assumption $3\sigma' B_0 \le \nu$, we have
{ $$
    \E \left[e^{\sigma^\prime \snorm{\omega^N(t)}_{H}^2}\right]
    \leq e^{\sigma^\prime \snorm{\omega^N(0)}_{H}^2} 
    + \E\left[\int_0^t \sigma^\prime e^{\sigma^\prime \snorm{\omega^N(s)}_{H}^2} 
    \left(-\frac{4\nu}{3}\snorm{\omega^N(s)}_{H}^2 + B_0\right) ds \right].
$$}
By the same argument as in Theorem 1.4.4 of \cite{Boritchev_Kuksin_2021_onedimensional}, applying the Gronwall lemma, we find that
\begin{equation}\label{L2_expectation_estimate}
\E\left[e^{\sigma^\prime \snorm{\omega^N(t)}_{H}^2}\right]\leq e^{-t} \E \left[e^{\sigma^\prime \norm{\omega_0}_{H}^2}\right] + C(\sigma,\nu,B_0),
\end{equation}
for any $N\in \N$.  

We now return to \eqref{td-Hm} and estimate the second term on the right-hand side of \eqref{td-Hm}. By the Cauchy-Schwarz inequality,  we have
\begin{eqnarray*}
\abs{\iprod{\partial_x\omega^N, u^N\omega^N}_{\dot{H}^m}} &\leq& \snorm{\omega^N}_{\dot{H}^{m+1}} \norm{\partial_x^m(u^N\omega^N)}_{H}, \\
\abs{\iprod{\omega^N, { u_x^N\omega^N}}_{\dot{H}^m}} &\leq& \snorm{\omega^N}_{\dot{H}^m} \norm{\partial_x^m(u^N_x\omega^N)}_{H}.
\end{eqnarray*}
H\"older's inequality 
%with $p=2m/k$, $q=2m/(m-k)$ and $r=2$ 
imply
\begin{equation*}
	\norm{\partial_x^m (u^N\omega^N) }_{H} \leq C(m) \sum_{k=0}^m \norm{\partial_x^k u^N \partial_x^{m-k}\omega^N}_{H}
	\leq C(m) \sum_{k=0}^m \norm{\partial_x^k u^N}_{L^{2m/k}} \norm{\partial_x^{m-k}\omega^N}_{L^{2m/(m-k)}}.
\end{equation*}
From the Gagliardo--Nirenberg inequality \eqref{GN_ineq} with $\beta=k$, $\gamma=2m/k$, $\alpha=m+1$ and $p=q=2$, we have
\[
\norm{\partial_x^k u^N}_{L^{2m/k}} \leq C \norm{\partial_x^{m+1} u^N}_{H}^{\theta(k)} \norm{u^N}_{H}^{1-\theta(k)} = C\norm{u^N}_{\dot{H}^{m+1}}^{\theta(k)} \norm{u^N}_{H}^{1-\theta(k)},
\]
where $\theta(k) = \frac{2mk+m-k}{2m(m+1)}$. Similarly, Gagliardo--Nirenberg inequality \eqref{GN_ineq} with $\beta=m-k$, $\gamma=2m/(m-k)$, $\alpha=m+1$ and $p=q=2$ also yields
\[
\norm{\partial_x^{m-k} u^N}_{L^{2m/(m-k)}} \leq C \norm{u^N}_{\dot{H}^{m+1}}^{\theta^\prime(k)} \norm{u^N}_{H}^{1-\theta^\prime(k)},
\]
for $\theta^\prime(k)=\frac{2m^2-2mk+k}{2m(m+1)}$. Hence, we have
\begin{equation*}
	\norm{\partial_x^k u^N}_{L^{2m/k}} \norm{\partial_x^{m-k}\omega^N}_{L^{2m/(m-k)}} \leq \norm{u^N}_{\dot{H}^{m+1}}^{\theta(k)} \norm{u^N}_{H}^{1-\theta(k)}
	\snorm{\omega^N}_{\dot{H}^{m+1}}^{\theta^\prime(k)} \snorm{\omega^{N}}_{H}^{1-\theta^\prime(k)}. 
\end{equation*}
Since $\norm{u^N}_{\dot{H}^m}  \leq\norm{u^N_x}_{\dot{H}^{m}} = \norm{\mathcal{H}(\omega^N)}_{\dot{H}^m}= \snorm{\omega^N}_{\dot{H}^m}$ for $m\geqq 0$,{   where the last equality follows from Proposition~\ref{Hilbert}}, we obtain 
\begin{equation*} 
\sum_{k=0}^m \norm{\partial_x^k u^N}_{L^{2m/k}} \norm{\partial_x^{m-k}\omega^N}_{L^{2m/(m-k)}}  \leq C(m) \snorm{\omega^N}_{\dot{H}^{m+1}}^{\theta+\theta^\prime} \snorm{\omega^N}_{H}^{2-\theta-\theta^\prime}.
\end{equation*}
Due to $\theta+\theta^\prime = 1-\frac{1}{2(m+1)}$, we have
\begin{equation*}
	\abs{\iprod{\partial_x\omega^N, u^N\omega^N}_{\dot{H}^m}} \leq C(m) \snorm{\omega^N}_{\dot{H}^{m+1}}^{2-\frac{1}{2(m+1)}}  \snorm{\omega^N}_{H}^{1+\frac{1}{2(m+1)}}.
\end{equation*}
Similarly, we obtain
\begin{equation*}
	\abs{\iprod{\omega^{N},u_{x}^{N}\omega^{N}}_{\dot{H}^{m}}}\leq C(m) \snorm{\omega^{N}}_{\dot{H}^{m+1}}^{2-\frac{1}{2(m+1)}} \snorm{\omega^{N}}_{H}^{1+\frac{1}{2(m+1)}}.
\end{equation*}
Consequently, it follows from Young's inequality 
%\blue with $p=(4m+4)/(4m+3)$ and $q=4m+4$ \black 
that
\begin{equation*}
	2\E[\iprod{\omega^N, 2\partial_x(u^N\omega^N) -  u_x^N\omega^N}_{\dot{H}^m}]	\leq \nu \E[\snorm{\omega^N}_{\dot{H}^{m+1}}^2] + C(m,\nu) \E[\snorm{\omega^N}_{H}^{4m+6}].
\end{equation*}
 Substituting this estimate into \eqref{td-Hm} yields 
\begin{equation*}
	\E[\snorm{\omega^N(t)}_{\dot{H}^m}^2]	+ \nu \int_0^t\E[\snorm{\omega^N(s)}_{\dot{H}^{m+1}}^2] ds 
    \leq \snorm{\omega^N(0)}_{\dot{H}^m}^2+ C(m,\nu) \int_0^t \E[\snorm{\omega^N(s)}_{H}^{4m+6}] ds+ B_m^N t.
\end{equation*}
Let us note that $\E[\snorm{\omega^{N}(s)}_{H}^{4m+6}] \leq C(m,\sigma)\E[e^{\sigma'\snorm{\omega^{N}(s)}_{H}^{2}}]$. 
In addition, recalling \eqref{L2_expectation_estimate},
\begin{eqnarray} \label{-nu_estimate}
&& \E[\snorm{\omega^N(t)}_{\dot{H}^m}^2]+\nu \int_0^t \E[\snorm{\omega^N(s)}_{\dot{H}^{m+1}}^2]ds \\
&&\leq \snorm{\omega^N(0)}_{\dot{H}^m}^2
   + B_m t + C(m, \nu, \sigma)\int_0^t (e^{-s} \E[e^{\sigma^\prime \norm{\omega_0}_{H}^2}] + C(\sigma,\nu,B_0)) ds. \nonumber   
\end{eqnarray} \black
Applying the same estimates to (\ref{td-Hm-exp}) 
gives rise to 
$$
    \E[\snorm{\omega^N(t)}_{\dot{H}^m}^2]\leq e^{-\nu t} \norm{\omega_0}_{\dot{H}^m}^2 +\int_0^t e^{-\nu(t-s)}\{C(m, \nu, \sigma)e^{\sigma^\prime\norm{\omega_0}_{H}^2}+ C(\sigma, \nu, B_0, B_m)]\}ds.
$$
Therefore, we obtain
\begin{equation*}
\E[\snorm{\omega^N(t)}_{\dot{H}^m}^2] \leq C(m,\nu,\sigma) (1 + \norm{\omega_0}_{\dot{H}^{m}}^2 + 	e^{\sigma^\prime \norm{\omega_0}_{H}^2}).
\end{equation*}
As $N\rightarrow \infty$, Theorem~\ref{sgclmg_global}, Proposition~\ref{Hm_as_eq} and Proposition~ \ref{Hm_as_eq_5} 
\black yield
\begin{equation*}
\E[\norm{\omega(t)}_{\dot{H}^m}^2]	\leq C(m,\nu,\sigma) (1 + \norm{\omega_0}_{\dot{H}^m}^2 + e^{\sigma^\prime \norm{\omega_0}_{H}^2}).
\end{equation*}
Finally, from \eqref{-nu_estimate}, we also find
\begin{equation*}
    \E\left[\int_0^t\snorm{\omega^N(s)}_{\dot{H}^{m+1}}^2 ds\right]
    \leq \frac{1}{\nu}\norm{\omega_0}_{\dot{H}^m}^2 + tC_m(1+e^{\sigma^\prime\norm{\omega_0}_{H}^2}),
\end{equation*}
which finishes the proof.
\end{proof}
%Let \( B_b(\dot{H}^m) \) be the space of bounded Borel measurable %functions on $\dot{H}^m$. Let $P_t\;(t\ge0)$ be the Markovian semigroup %for \eqref{gCLMG_eq_r}, defined as
%\[
%    P_t\varphi(y) = \E\left[\varphi(\omega(t; y))\right], \quad  %\varphi\in B_b(\dot{H}^m),\quad y\in \dot{H}^m.
%\]
$\mathrm{Theorem}$ \ref{Stochastic_Hm_estimate} results in the existence of an invariant measure for the stochastic gCLMG equation. 

\begin{theorem}
    Let $m\in \N$, $a = -2$, $\nu>0$, and $\omega_0\in\dot{H}^m$. 
    Let $m_\ast \in \N$ be such that $m_\ast >m$. Assume $B_{m_\ast}<\infty$. Then there exists an invariant measure $\mu$ on $\dot{H}^m$ for the transition semigroup $(P_t)_{t\ge 0}$ associated with  \eqref{gCLMG_eq_r}. Moreover, $\mu$ is supported by $\dot{H}^{m+1}$. In other words,  
    \begin{equation}\label{support}
        \int_{\dot{H}^m} \norm{y}_{\dot{H}^{m+1}}^2 \mu(dy)< \infty.
    \end{equation}
\end{theorem}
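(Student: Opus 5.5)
We follow the classical Krylov--Bogoliubov scheme, starting from the deterministic initial datum $\omega_0=0$ (any fixed $\omega_0\in\dot{H}^m$ works equally well). For $T>0$ we define the time-averaged measures
\[
\mu_T(\Gamma)\coloneqq \frac{1}{T}\int_0^T P_t^*\delta_0(\Gamma)\,dt=\frac1T\int_0^T \mathbb{P}(\omega(t;0)\in\Gamma)\,dt,\qquad \Gamma\subset \dot{H}^m \text{ Borel}.
\]
These are probability measures on $\dot{H}^m$; measurability of $t\mapsto P_t^*\delta_0(\Gamma)$ follows from the continuity $\omega\in X^m_\infty$ a.s. from Theorem~\ref{sgclmg_global}.

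\textbf{Tightness.} By \eqref{Hm_integration_estimate} of Theorem~\ref{Stochastic_Hm_estimate} with $\omega_0=0$ (fixing some $\sigma$ with $\sigma B_0\le\nu$),
\[
\int_{\dot{H}^m}\norm{y}_{\dot{H}^{m+1}}^2\,\mu_T(dy)=\frac1T\E\Big[\int_0^T\norm{\omega(s)}_{\dot{H}^{m+1}}^2ds\Big]\le 2C',
\]
which is uniform in $T\ge1$. Here $\norm{y}_{\dot{H}^{m+1}}$ is understood as $+\infty$ off $\dot{H}^{m+1}$; this is a lower semicontinuous function on $\dot{H}^m$, being a supremum of finite Fourier sums. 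The balls $\{\norm{y}_{\dot{H}^{m+1}}\le R\}$ are compact in $\dot{H}^m$ by Rellich on $\mathbb{S}^1$, since $|k|^{2m}\le |k|^{2m+2}/K^2$ for $|k|>K$. Chebyshev then gives $\mu_T(\{\norm{y}_{\dot{H}^{m+1}}>R\})\le 2C'/R^2$, so $\{\mu_T\}_{T\ge1}$ is tight. By Prokhorov there is a sequence $T_n\to\infty$ and $\mu\in\mathcal{P}(\dot{H}^m)$ with $\mu_{T_n}\rightharpoonup\mu$.

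\textbf{Invariance.} We need the Feller property: for $f\in C_b(\dot{H}^m)$, $P_tf\in C_b(\dot{H}^m)$. This follows from Lemma~\ref{thm:conti_dependence}. For fixed $\eta$, the path $v$ is fixed, and $\omega(t;y)=v+w(t;y)$ depends continuously on $y$ in $\dot{H}^m$; the constant in \eqref{conti_dependence} depends only on the $X^m_T$ norms of the two solutions, which are locally bounded in $y$ by the a priori bound after Lemma~\ref{apriori}. Dominated convergence then gives continuity of $P_tf$. With this in hand, the standard computation
\[
\iprod{P_sf,\mu_T}-\iprod{f,\mu_T}=\frac1T\Big(\int_T^{T+s}-\int_0^s\Big)P_tf(0)\,dt,
\]
bounded by $2s\norm{f}_\infty/T$, together with passage to the limit along $T_n$ using $P_sf\in C_b$, yields $\iprod{P_sf,\mu}=\iprod{f,\mu}$ for all $f\in C_b$, hence for all $f\in B_b$ by a monotone class argument. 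So $\mu$ is invariant.

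\textbf{Support.} The function $y\mapsto\min(\norm{y}^2_{\dot{H}^{m+1}},M)$ is lower semicontinuous and bounded on $\dot{H}^m$. By the portmanteau theorem,
\[
\int \min(\norm{y}^2_{\dot{H}^{m+1}},M)\,d\mu\le\liminf_n\int \min(\norm{y}^2_{\dot{H}^{m+1}},M)\,d\mu_{T_n}\le 2C'.
\]
Letting $M\to\infty$ by monotone convergence gives \eqref{support}, so in particular $\mu(\dot{H}^{m+1})=1$.

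The main obstacle is the Feller property. Lemma~\ref{thm:conti_dependence} gives a constant depending on the solutions' norms rather than on the data, so we must first show that $\sup_{t\le T}\norm{w(t;y)}_{\dot{H}^m}$ is locally uniformly bounded in $y$ pathwise. We would obtain this from the a priori estimate \eqref{Hm_estimate_for_w}, passed to the limit $N\to\infty$, whose right side depends only on $\norm{v}_{X^m_T}$, $T$ and $\norm{y}_{\dot{H}^m}$. Everything else is routine.
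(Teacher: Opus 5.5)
Your proposal is correct and follows the paper's argument: time-averaged measures $\frac1T\int_0^T P_t^*\delta_{\omega_0}\,dt$, tightness from Chebyshev plus \eqref{Hm_integration_estimate} and compactness of $\dot H^{m+1}$-balls in $\dot H^m$, Prokhorov, and passage of the uniform $\dot H^{m+1}$ moment bound to the weak limit. You add two details the paper does not spell out. You verify the Feller property explicitly via Lemma~\ref{thm:conti_dependence} and the pathwise a priori bound, which the paper leaves implicit when it cites the Krylov--Bogoliubov theorem. You also prove the support estimate directly by lower semicontinuity and the portmanteau theorem, where the paper invokes Proposition~\ref{Hm_as_eq_5}.
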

\begin{proof}
    %Let  $\mu_0\in \mathcal{P}(\dot{H}^m)$  be the distribution of %$\omega_0$, and 
    Define for $T\ge 1$, \black
    \begin{equation*}
        R^\ast_T\delta_{\omega_0} = \frac{1}{T}\int_0^T P^\ast_t\delta_{\omega_0} dt,
    \end{equation*}
    where $P_t^\ast$ denotes the adjoint of $P_t$.
    If the sequence $\left\{R^\ast_T\delta_{\omega_0}\right\}_{T\in \N}$ is tight, there exists an invariant measure by Krylov-Bogoliubov method (see Corollary 3.1.2 of \cite{da1996ergodicity}).
    Therefore, it is sufficient to show the tightness of $\{R^\ast_T\delta_{\omega_0}\}_{T\in\N}$.

For $R>0$, let $\overline{B^R_{m+1}}\coloneqq \{u\in\dot{H}^{m+1}; \norm{u}_{\dot{H}^{m+1}}\leq R\}$ and consider the compliment
    \[
    (\overline{B^R_{m+1}})^c = \dot{H}^{m}\backslash \overline{B^R_{m+1}}.
    \]
By Chebyshev's inequality and \eqref{Hm_integration_estimate} with a parameter  $\sigma^{\prime}$ in Theorem 3.1, we have
\begin{align*}
    R^\ast_T\delta_{\omega_0}((\overline{B^R_{m+1}})^c)
    & = \frac{1}{T}\int_0^T P^\ast_t\delta_{\omega_0}((\overline{B^R_{m+1}})^c) dt\\
    & = \frac{1}{T}\int_0^T \mathbb{P}(\norm{\omega(t; \omega_0)}_{\dot{H}^{m+1}}>R) dt\\
    & \leq \frac{1}{R^2T}\int_0^T \E[\norm{\omega(t; \omega_0)}_{\dot{H}^{m+1}}^2]dt\\
    & \leq \frac{1}{R^2 T} \Big(\frac{1}{\nu}\norm{\omega_0}_{\dot{H}^m}^2 + CT(1+ e^{\sigma^\prime \norm{\omega_0}_{H}^2})\Big)\\
    & \leq\dfrac{C'}{\nu R^2},
\end{align*}
where the constant $C'$ depends on $\omega_0$, and is independent of $T$.
Hence, we have
\begin{equation*}
    R^\ast_T\delta_{\omega_0}(\overline{B^R_{m+1}})\ge 1-\dfrac{C}{\nu R^2}.
\end{equation*}
Since by the Rellich-Kondrachov theorem, $\overline{B^R_{m+1}}$ is compact in $\dot{H}^{m}$. Thus, the family of measures $\left\{R^\ast_T\delta_{\omega_0}\right\}_{T\in \N}$ is tight in $\mathcal{P}(\dot{H}^{m})$. 
Consequently, according to the Prokhorov Theorem, there exist a subsequence $\{T_j\}_{j\in \N}$ and a limit $\mu\in\mathcal{P}(\dot{H}^m)$ such that $R^\ast_{T_j} \delta_{\omega_0}$ converges weakly to $\mu$ and is an invariant measure for $P_t$. Moreover, for any $T\in\N$ and $\omega_0 \in \dot{H}^{m}$, we have
% for any $f\in B_b(\dot{H}^{m})$,  
% \begin{align*}
%     \langle R^\ast_T\delta_{\omega_0}, f \rangle
%    &=\int_{\dot{H}^{m}} \norm{y}_{\dot{H}^{m+1}}^2 R^\ast_T\mu_0(dy)\\
%    &=\frac{1}{T} \int_{\dot{H}^{m}} f \int_0^T P^\ast_t\mu_0(dy) dt\\
%    &=\frac{1}{T}\int_0^T\left\langle f, P^\ast_t\delta_{\omega_0}\right\rangle dt= \frac{1}{T}\int_0^T\left\langle P_t f,\delta_{\omega_0}\right\rangle dt.
% \end{align*}
% {\color{red}Hence, for $f=\| y \|^2_{\dot{H}^{m+1}}$, we have}
\begin{align*}
\langle R^\ast_T\delta_{\omega_0}, \|y\|^2_{\dot{H}^{m+1}}\rangle
%    &=\frac{1}{T} \int_0^T\left\langle\E\norm{\omega(t; %y)}_{\dot{H}^{m+1}}^2  ,
% \mu_0\right\rangle dt\\
% &=\frac{1}{T}\int_0^T\left\langle \|y\|^2_{\dot{H}^2_{m+1}}, P^\ast_t\delta_{\omega_0}\right\rangle dt= \frac{1}{T}\int_0^T\left\langle P_t \|y\|^2_{\dot{H}^2_{m+1}},\delta_{\omega_0}\right\rangle dt \\
    &=\frac{1}{T}\int_0^T \E \left[\norm{\omega(t; \omega_0)}_{\dot{H}^{m+1}}^2\right] dt
    %\quad(\text{by}\quad\eqref{Fubini})
    \leq C.
\end{align*}
The inequality follows from \eqref{Hm_integration_estimate}. Since $R^\ast_{T_j}\delta_{\omega_0}$ converges to $\mu$ weakly  in $\dot{H}^{m}$ \black as $j\to\infty$, 
we get $\langle \mu, \|y\|^2_{\dot{H}^{m+1}}\rangle \le C$. It follows 
from Corollary 11.1.5 of \cite{Boritchev_Kuksin_2021_onedimensional} (in the Appendix; Proposition~\ref{Hm_as_eq_5}) that 
\eqref{support} holds.
\end{proof} \black

\section{Uniqueness of invariant measures for the large viscosity case}
In this section, we check if $\nu$ is sufficiently large, the uniqueness of the invariant measure holds for the stochastic gCLMG equation. As in the previous section, suppose that initial data is nonrandom for simplicity. The same result can be obtained for random initial data. 
%We fix parameters \(T > 0\) and \(m \in \mathbb{N}\), and assume that \%(\omega_0 \in \dot{H}^m\). Furthermore, for some \(m_\ast > m\), we assume that \(B_{m_\ast} < \infty\); in other words, \(\xi \in \dot{X}_T^{m_\ast}\).
%Then, as previously mentioned, there exists the global solution $\omega\in X_T^m$ for \eqref{gCLMG_eq_r}. In this section, we consider only nonrandom initial data for simplicity.
First, we consider the case when the initial data belongs to $\dot{H}^1$.
%H^mならH^1なのでこれでOK

\begin{lemma}\label{dif_estimate}
    Let $\nu >0$, $a = -2$. Let $\omega^{(1)}$ and $\omega^{(2)}$ be  solutions of \eqref{gCLMG_eq_r} for initial data $\omega_0^{(1)} \in \dot{H}^1$ and $ \omega_0^{(2)}\in\dot{H}^1$ respectively. Assume $B_{1_\ast} <+\infty$ for some $1_{\ast} >1$. 
    If we write $\tilde\omega\coloneqq\omega^{(1)}-\omega^{(2)}$, then there exists a constant $C_{\ast}>0$ and 
%\begin{equation}\label{exp_estimate}
    %\norm{\tilde\omega}_{H}^2
    %\leq \norm{\tilde\omega_0}_{H}^2\exp\left(-\frac{\nu t}{4} + \int_0^t\frac{4\snorm{\omega^{(1)}}_{H}^2}{\nu}ds + \int_0^t \frac{4\snorm{\omega^{(2)}}_{H}^2}{\nu}ds + \int_0^t \frac{16\snorm{\omega^{(1)}}_{\dot{H}^1}^2}{\nu} ds\right)
%\end{equation}
    \begin{equation}\label{exp_Hestimate}
        \norm{\tilde\omega}_{H}^2
    \leq \norm{\tilde\omega_0}_{H}^2\exp\left\{-\frac{\nu t}{4} + \frac{C_{\ast}}{\nu}\left(\int_0^t\snorm{\omega^{(1)}}_{\dot{H}^1}^2ds + \int_0^t {\snorm{\omega^{(2)}}_{\dot{H}^1}^2}ds\right)\right\}.
    \end{equation}
Moreover, we have the following estimate.
    \begin{equation*}
    \E\left[\norm{\tilde\omega}_{H}^2\right]
    \leq \norm{\tilde\omega_0}_{H}^2 \exp\left({-\frac{\nu t}{4}}\right)
    \left(\E\left[\exp\int_0^t\frac{C_\ast\snorm{\omega^{(1)}}_{\dot{H}^1}^2}{\nu} ds\right]\right)^{\frac{1}{2}}\left({\E\left[\exp\int_0^t\frac{C_\ast \snorm{\omega^{(2)}}_{\dot{H}^1}^2}{\nu} ds\right]}\right)^{\frac{1}{2}}.
\end{equation*}
\end{lemma}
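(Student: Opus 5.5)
Since the noise is additive, $\tilde\omega=\omega^{(1)}-\omega^{(2)}$ solves a random PDE with no stochastic term, so I will run a pathwise energy estimate in $H$. To keep the manipulations rigorous I will do it on the Galerkin solutions of \eqref{gCLMG_eq_proj} and pass to the limit with Theorem~\ref{sgclmg_global}.

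Write $B(\omega,\eta)=\partial_x(u_\omega\eta)+u_\omega\partial_x\eta+(\partial_xu_\omega)\eta$, where $\partial_x u_\omega=\mathcal{H}(\omega)$. With $a=-2$ the nonlinearity is $2u\omega_x+u_x\omega$. Setting $\tilde u=u^{(1)}-u^{(2)}$, the difference satisfies
\[
\partial_t\tilde\omega-\nu\tilde\omega_{xx}=2u^{(1)}\tilde\omega_x+u^{(1)}_x\tilde\omega+2\tilde u\,\omega^{(2)}_x+\tilde u_x\omega^{(2)}.
\]
Pairing with $\tilde\omega$ in $H$, the part linear in $\tilde\omega$ with coefficient $u^{(1)}$ cancels. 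This is the same integration by parts used in Lemma~\ref{apriori}: $\langle u_x\tilde\omega+2u\tilde\omega_x,\tilde\omega\rangle=0$. That leaves
\[
\tfrac12\tfrac{d}{dt}\|\tilde\omega\|_H^2+\nu\|\tilde\omega\|_{\dot H^1}^2=\langle 2\tilde u\,\omega^{(2)}_x+\tilde u_x\omega^{(2)},\tilde\omega\rangle .
\]
To avoid needing $\omega^{(2)}_x\in L^\infty$, I will integrate the first term by parts: $\langle 2\tilde u\omega^{(2)}_x,\tilde\omega\rangle=-2\langle \tilde u_x\omega^{(2)},\tilde\omega\rangle-2\langle\tilde u\,\omega^{(2)},\tilde\omega_x\rangle$. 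The right-hand side then equals $-\langle\tilde u_x\omega^{(2)},\tilde\omega\rangle-2\langle\tilde u\omega^{(2)},\tilde\omega_x\rangle$.

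Next I bound each term using $\|\tilde u_x\|_H=\|\tilde\omega\|_H$ (Proposition~\ref{Hilbert}), $\|\tilde u\|_{L^\infty}\le C\|\tilde\omega\|_H$, $\|\omega^{(2)}\|_{L^\infty}\le\|\omega^{(2)}\|_{\dot H^1}$ and $\|\omega^{(2)}\|_H\le\|\omega^{(2)}\|_{\dot H^1}$. For the first term I use Agmon in the form $\|\tilde\omega\|_{L^\infty}\le C\|\tilde\omega\|_H^{1/2}\|\tilde\omega\|_{\dot H^1}^{1/2}$, or simply place $\omega^{(2)}$ in $L^\infty$. This gives
\[
|\mathrm{RHS}|\le C\|\omega^{(2)}\|_{\dot H^1}\|\tilde\omega\|_H\|\tilde\omega\|_{\dot H^1}\le\tfrac{\nu}{2}\|\tilde\omega\|_{\dot H^1}^2+\tfrac{C}{\nu}\|\omega^{(2)}\|_{\dot H^1}^2\|\tilde\omega\|_H^2 .
\]
Poincaré, $\|\tilde\omega\|_{\dot H^1}\ge\|\tilde\omega\|_H$ on mean-zero functions, converts the remaining dissipation $\tfrac\nu2\|\tilde\omega\|_{\dot H^1}^2$ into a damping term. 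This yields
\[
\tfrac{d}{dt}\|\tilde\omega\|_H^2\le\Big(-\nu+\tfrac{C}{\nu}\|\omega^{(2)}\|_{\dot H^1}^2\Big)\|\tilde\omega\|_H^2 ,
\]
which is stronger than the stated $-\nu/4$. Gronwall then gives \eqref{exp_Hestimate}. Because I expanded around $u^{(1)}$, only $\omega^{(2)}$ appears; adding the harmless nonnegative term $\frac{C_\ast}{\nu}\int_0^t\|\omega^{(1)}\|_{\dot H^1}^2$ in the exponent gives the symmetric form stated.

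For the expectation bound I take $\mathbb{E}$ of \eqref{exp_Hestimate}, note that $\tilde\omega_0$ is deterministic, and apply Cauchy--Schwarz to the product $e^{\frac{C_\ast}{\nu}\int\|\omega^{(1)}\|^2}\,e^{\frac{C_\ast}{\nu}\int\|\omega^{(2)}\|^2}$. Strictly, Cauchy--Schwarz produces $\mathbb{E}[e^{2\frac{C_\ast}{\nu}\int\cdots}]^{1/2}$. I handle this by absorbing the factor $2$ into the constant $C_\ast$, that is, by defining $C_\ast$ in the pathwise bound with that slack from the start.

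The main obstacle is rigour rather than the computation: $\omega^{(i)}$ is only in $X^1_\infty$, so differentiating $\|\tilde\omega\|_H^2$ requires justification. Here $\tilde\omega\in L^2_t\dot H^2$ by \eqref{apriori_m+1_estimate}, so I will either use the Galerkin approximations and pass to the limit, or use the Lions–Magenes lemma. The other delicate point is the single integration by parts that moves the derivative off $\omega^{(2)}$, so that only $\|\omega^{(2)}\|_{\dot H^1}$ appears.
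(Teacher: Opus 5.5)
Your proof is correct. Its skeleton is the paper's: a pathwise $L^2$ energy identity for $\tilde\omega$, then Young, Poincar\'e and Gronwall, then expectation plus Cauchy--Schwarz. The difference is in how you handle the nonlinear terms.

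\textbf{The cancellation.} The paper puts $u^{(2)}$ on the terms linear in $\tilde\omega$ and $\omega^{(1)}$ on the $\tilde u$-terms. It then estimates all four pairings, including $\langle u^{(2)}_x\tilde\omega,\tilde\omega\rangle$ and $2\langle u^{(2)}\tilde\omega_x,\tilde\omega\rangle$, which in fact cancel; this is why both solutions appear in its exponent. Because you use the cancellation, your bound is one-sided and involves only $\omega^{(2)}$. Because you absorb everything into the dissipation, rather than splitting $\|\omega^{(i)}\|\le\nu/8+\dots$, you even get rate $\nu$ instead of $\nu/4$. The symmetric statement is then a trivial weakening. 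Downstream, Proposition~\ref{expmixing} would need only one exponential moment and no Cauchy--Schwarz at all.

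\textbf{Where your bookkeeping is sounder.} The paper's bounds $\|\tilde\omega\|_H^2\|\omega^{(i)}\|_H$ for the two stretching-type pairings do not follow from H\"older alone. They need $L^\infty$ norms of $\omega^{(1)}$ and $\mathcal{H}\omega^{(2)}$, hence $\dot H^1$ norms, which is what you use. Also, the paper's Cauchy--Schwarz step literally produces $\mathbb{E}[\exp(\frac{2C_\ast}{\nu}\int_0^t\|\omega^{(i)}\|_{\dot H^1}^2ds)]^{1/2}$. Defining $C_\ast$ as twice the pathwise constant, as you do, is exactly what makes both displayed inequalities hold with the same $C_\ast$. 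Your justification of the energy identity, via Galerkin or Lions--Magenes, supplies a step the paper performs only formally.

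\textbf{One superfluous step.} The integration by parts on $\langle 2\tilde u\,\omega^{(2)}_x,\tilde\omega\rangle$ is harmless but unnecessary. No $L^\infty$ bound on $\omega^{(2)}_x$ was ever at stake: $\|\tilde u\|_{L^\infty}\le C\|\tilde\omega\|_H$ directly gives $|\langle 2\tilde u\,\omega^{(2)}_x,\tilde\omega\rangle|\le C\|\omega^{(2)}\|_{\dot H^1}\|\tilde\omega\|_H^2$. This is how the paper treats the analogous term.
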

\black
\begin{proof}
By the definition of $\omega^{(1)} $and $\omega^{(2)}$, we get
\begin{align*}
\omega^{(1)}_{t}-u^{(1)}_{x}\omega^{(1)}-2u^{(1)}\omega^{(1)}_{x}-\nu\omega^{(1)}_{xx} = \xi_t, \quad u^{(1)}_{x} = \mathcal{H}\omega^{(1)}\\
\omega^{(2)}_{t}-u^{(2)}_{x}\omega^{(1)}-2u^{(2)}\omega^{(2)}_{x}-\nu\omega^{(2)}_{xx} = \xi_t, \quad u^{(2)}_{x} = \mathcal{H}\omega^{(2)}.
\end{align*} 
Write $\tilde{u} =u^{(1)}-u^{(2)}$.
By subtracting the two equations, we have
$$
\tilde\omega_t-u^{(1)}_{x}\omega^{(1)}+u^{(2)}_{x}\omega^{(2)}-2u^{(1)}\omega^{(1)}_{x}+2u^{(2)}\omega^{(2)}_{x}-\nu\tilde\omega_{xx} = 0.
$$
Hence, we obtain
$$
\tilde\omega_t-\nu\tilde\omega_{xx} -\mathcal{H}(\omega^{(1)}-\omega^{(2)})\omega^{(1)}-\mathcal{H}(\omega^{(2)})(\omega^{(1)}-\omega^{(2)})-2(u^{(1)}-u^{(2)})\omega^{(1)}_{x}-2u^{(2)}(\omega^{(1)}_{x}-\omega^{(2)}_{x})=0.
$$
Multiplying $\tilde\omega$ to both sides and integrating yields
\begin{equation}
\begin{aligned}
\frac{1}{2}\dfrac{d}{dt}\|\tilde\omega\|_{H}^2+\nu \norm{ \tilde{\omega} \black}_{\dot{H}^{1}}^{2}
&=\iprod{\tilde{u}_x\omega^{(1)}, \tilde\omega}+\iprod{u^{(2)}_{x}\tilde\omega, \tilde\omega}+2\iprod{\tilde{u}\omega^{(1)}_{x}, \tilde\omega}+2\iprod{u^{(2)}\tilde\omega_{x}, {\tilde\omega}}.
\end{aligned}\label{dif}
\end{equation}
Due to the Cauchy–Schwarz inequality and the isometry of the Hilbert transform in $H$, 
\begin{align*}
    &\frac{1}{2}\left(\|\tilde\omega(t)\|_{H}^2 - \|\tilde\omega_0\|_{H}^2\right)
    + \nu \int_0^t \| \tilde\omega(s) \black\|_{\dot{H}^{1}}^{2} \, ds \notag \\
    &\leq 
    \int_0^t \Bigg(
        \|\tilde\omega(s)\|_{H}^2 \|\omega^{(1)}(s)\|_{H}
        + \|\tilde\omega(s)\|_{H}^2 \|\omega^{(2)}(s)\|_{H} \notag \\
    &\qquad
        + 2 \|\omega^{(1)}(s)\|_{\dot{H}^1} \|\tilde\omega(s)\|_{H}^2
        + 2 \|\tilde\omega(s)\|_{\dot{H}^1} \|\omega^{(2)}(s)\|_{H} \|\tilde\omega(s)\|_{H}
    \Bigg) \, ds.
\end{align*}
By the Young inequality, we have
$$
2\norm{\tilde\omega}_{\dot{H}^1}\snorm{\omega^{(2)}}_{H}\norm{\tilde\omega}_{H}
    \leq \frac{\nu}{2}\norm{\tilde\omega}_{\dot{H}^1}^2 + \frac{2}{\nu}\snorm{\omega^{(2)}}_{H}^2\norm{\tilde\omega}_{H}^2,
$$
which leads to
\begin{align*}
    &\frac{1}{2}\left(\|\tilde\omega(t)\|_{H}^2 - \|\tilde\omega_0\|_{H}^2\right) + \frac{\nu}{2} \int_0^t \|\tilde\omega(s) \black\|_{\dot{H}^{1}}^{2} \, ds
    \\
    \leq 
    &\int_0^t \|\tilde\omega(s)\|_{H}^2 \left( 
\|\omega^{(1)}(s)\|_{H} 
+ \|\omega^{(2)}(s)\|_{H} 
+ 2\|\omega^{(1)}(s)\|_{\dot{H}^1} 
+ \frac{2}{\nu} \|\omega^{(2)}(s)\|_{H}^2 
\right) \, ds.
\end{align*}
Moreover, using Young inequality again, for $i=1, 2$
$$
    \norm{\omega^{(i)}}_{H}\leq \frac{\nu}{8}+ \dfrac{2\norm{\omega^{(i)}}_{H}^2}{\nu}
$$
and 
$$
    2\snorm{\omega^{(1)}}_{\dot{H}^1}\leq \frac{\nu}{8}+\dfrac{8\snorm{\omega^{(1)}}_{\dot{H}^1}^2}{\nu}.
$$
 Due to  $\norm{\tilde\omega}_{H}\leq\norm{\tilde\omega}_{\dot{H}^1}$, we get
\begin{equation}
    \|\tilde\omega(t)\|_{H}^2 - \|\tilde\omega_0\|_{H}^2 
    \leq 
    \int_0^t \|\tilde\omega(s)\|_{H}^2 \left(
        -\frac{\nu}{4}
        + \frac{4\|\omega^{(1)}(s)\|_{H}^2}{\nu}
        + \frac{8\|\omega^{(2)}(s)\|_{H}^2}{\nu}
        + \frac{16\|\omega^{(1)}(s)\|_{\dot{H}^1}^2}{\nu}
    \right) ds.\black
\end{equation}
 By Gronwall's lemma, for some constant $C_{\ast}$, we have 
$$
    \norm{\tilde\omega}_{H}^2
    \leq \norm{\tilde\omega_0}_{H}^2\exp\left(-\frac{\nu t}{4} + \int_0^t\frac{C_{\ast}\snorm{\omega^{(1)}}_{\dot{H}^1}^2}{\nu}ds + \int_0^t \frac{C_{\ast}\snorm{\omega^{(2)}}_{\dot{H}^1}^2}{\nu} ds\right).
$$
Taking the expectation for both sides of \eqref{exp_Hestimate} and using the Schwarz inequality, we get
$$
    \E\left[\norm{\tilde\omega}_{H}^2\right]
    \leq \norm{\tilde\omega_0}_{H}^2 \exp\left({-\frac{\nu t}{4}}\right)
    \left(\E\left[\exp\int_0^t\frac{C_{\ast}\snorm{\omega^{(1)}}_{\dot{H}^1}^2}{\nu} ds\right]\right)^{\frac{1}{2}}
    \left(\E\left[\exp\int_0^t\frac{C_{\ast}\snorm{\omega^{(2)}}_{\dot{H}^1}^2}{\nu} ds\right]\right)^{\frac{1}{2}}.
$$
\end{proof} \black

In the same way as \cite{constantin2013ergodicity}, the next lemma is proved.  Recall $B_0 =\sum_{k\in \Z^\ast} b_k^2$.

\begin{lemma}\label{Arnaud_lemma}
    Let $ a = -2$, $\nu>0$, $\varepsilon>0$ and $\omega_0 \in \dot{H}^1$. Assume $B_{1_\ast} <+\infty$ for some $1_{\ast} >1$. If $2\varepsilon B_0\leq \nu$, the solution $\omega$ of \eqref{gCLMG_eq_r} satisfies for any $t\ge0$,
\begin{equation}\label{expexp}
\mathbb{E}\left[\exp\left(\varepsilon\left(\norm{\omega(t)}_{{H}}^2+\nu\int_0^t{\norm{\omega(s)}_{\dot{H}^1}^2}ds\right)\right)\right]
\leq\exp(\varepsilon(\norm{\omega_0}_{H}^2+tB_0)).
\end{equation}
Moreover, if $\mu$ is an invariant measure for $(P_t)_{t\ge 0}$, then
\begin{equation} \label{expbdd}
\int_{\dot{H}^1} \exp(\varepsilon\norm{y}^2_{H})\mu(dy)<\infty. 
\end{equation}
\end{lemma}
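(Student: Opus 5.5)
I will work at the level of the Galerkin approximations $\omega^N$ of \eqref{gCLMG_eq_proj} and pass to the limit $N\to\infty$ at the end, using Theorem~\ref{sgclmg_global} and Fatou's lemma. The key structural fact for $a=-2$ is the one already used in the proof of Theorem~\ref{Stochastic_Hm_estimate}: the nonlinearity is orthogonal to $\omega^N$ in $H$, because
\[
\iprod{u_x\omega+2u\omega_x,\omega}_H=0
\]
after integration by parts. Itô's formula for $\snorm{\omega^N}_H^2$ therefore gives
\[
d\snorm{\omega^N}_H^2=\bigl(-2\nu\snorm{\omega^N}_{\dot H^1}^2+B_0^N\bigr)dt+dM_t,
\qquad dM_t=2\sum_{0<|k|\le N}b_k\iprod{\omega^N,e_k}_H\,d\beta_k .
\]
Its quadratic variation satisfies
\[
d\langle M\rangle_t=4\sum b_k^2\iprod{\omega^N,e_k}^2dt\le 4B_0\snorm{\omega^N}_H^2dt\le 4B_0\snorm{\omega^N}_{\dot H^1}^2dt,
\]
where the last step is Poincaré's inequality on mean-zero functions.

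Set
\[
Y_t=\snorm{\omega^N(t)}_H^2+\nu\int_0^t\snorm{\omega^N(s)}_{\dot H^1}^2ds-tB_0^N .
\]
Then $dY_t=-\nu\snorm{\omega^N}_{\dot H^1}^2dt+dM_t$. I compare $\exp(\varepsilon Y_t)$ with the exponential martingale $\exp(\varepsilon M_t-\frac{\varepsilon^2}{2}\langle M\rangle_t)$. Since
\[
\varepsilon Y_t=\varepsilon Y_0+\varepsilon M_t-\varepsilon\nu\int_0^t\snorm{\omega^N}_{\dot H^1}^2ds,
\]
and since the condition $2\varepsilon B_0\le\nu$ gives
\[
\frac{\varepsilon^2}{2}\langle M\rangle_t\le 2\varepsilon^2B_0\int_0^t\snorm{\omega^N}_{\dot H^1}^2ds\le \varepsilon\nu\int_0^t\snorm{\omega^N}_{\dot H^1}^2ds,
\]
we obtain $\exp(\varepsilon Y_t)\le \exp(\varepsilon Y_0)\,\mathcal{E}_t$, where $\mathcal{E}_t$ is the exponential martingale. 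A positive local martingale is a supermartingale, so $\E[\mathcal{E}_t]\le 1$ with no Novikov condition needed. Hence
\[
\E\Bigl[\exp\Bigl(\varepsilon\snorm{\omega^N(t)}_H^2+\varepsilon\nu\int_0^t\snorm{\omega^N}_{\dot H^1}^2\Bigr)\Bigr]\le \exp\bigl(\varepsilon\snorm{P_N\omega_0}_H^2+\varepsilon tB_0^N\bigr)\le \exp\bigl(\varepsilon(\snorm{\omega_0}_H^2+tB_0)\bigr).
\]
Theorem~\ref{sgclmg_global} gives a.s. convergence $\omega^N\to\omega$ in $X_T^1$. This yields pointwise convergence of $\snorm{\omega^N(t)}_H^2$, and, because $\sup_t\snorm{\omega^N-\omega}_{\dot H^1}\to0$, convergence of the time integral as well. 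Fatou's lemma then gives \eqref{expexp}.

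For \eqref{expbdd}, I would not integrate \eqref{expexp} directly against $\mu$, since the right side grows in $t$. Instead I get a dissipative bound from Itô's formula applied to $\exp(\varepsilon\snorm{\omega}_H^2)$, exactly as in \eqref{eq:Ito_exp}. Using $2\varepsilon B_0\le\nu$ and Poincaré, the drift is at most
\[
\varepsilon e^{\varepsilon\snorm{\omega}_H^2}\bigl(-\nu\snorm{\omega}_H^2+B_0\bigr)\le -c\,e^{\varepsilon\snorm{\omega}_H^2}+C .
\]
This gives
\[
\E\bigl[e^{\varepsilon\snorm{\omega(t)}_H^2}\bigr]\le e^{-ct}e^{\varepsilon\snorm{\omega_0}_H^2}+C',
\]
with $C'$ independent of $\omega_0$ and $t$. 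Now take the truncation $f_R=\min(e^{\varepsilon\snorm{y}_H^2},R)$, which is bounded and continuous. Invariance gives $\iprod{f_R,\mu}=\iprod{P_tf_R,\mu}$. Pointwise, $P_tf_R(y)\le \min\bigl(R,\,e^{-ct}e^{\varepsilon\snorm{y}_H^2}+C'\bigr)$. Letting $t\to\infty$ with dominated convergence (dominated by $R$) gives $\iprod{f_R,\mu}\le C'$. Letting $R\to\infty$ with monotone convergence yields \eqref{expbdd}.

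The main technical obstacle is justifying the Itô calculus and the passage to the limit: the exponential functional is unbounded, and $\mu$ lives on $\dot H^1$ while the estimates are initially only available for $\dot H^1$ data through the Galerkin scheme. I handle this by working with the finite-dimensional $\omega^N$, relying on the supermartingale property so that no integrability of $\mathcal{E}_t$ is required, and then applying Fatou's lemma. Where needed, I would localize with a stopping time $\tau_R=\inf\{t:\snorm{\omega^N}_H\ge R\}$ before letting $R\to\infty$.
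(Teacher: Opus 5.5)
Your proof is correct, and it departs from the paper's in both halves.

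For \eqref{expexp}, the paper applies It\^o's formula to $\exp(\varepsilon Z)$, with $Z=\norm{\omega}_H^2+\nu\int_0^t\norm{\omega}_{\dot H^1}^2ds$, directly for the limiting solution. It absorbs the It\^o correction $2\varepsilon^2\sum_k b_k^2\iprod{\omega,e_k}_H^2$ into the dissipation via $2\varepsilon B_0\le\nu$, exactly as you do, then takes expectations and closes with Gronwall. You package the same absorption as the pathwise domination $e^{\varepsilon Y_t}\le e^{\varepsilon Y_0}\mathcal{E}_t$ by a positive local martingale. This gives the bound in one stroke at the Galerkin level, and Fatou then handles $N\to\infty$. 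It needs neither $\E\int_0^t\mathcal{E}(s)\iprod{\omega(s),d\xi}_H=0$ nor a priori finiteness of $\E[\mathcal{E}(s)]$ for Gronwall; the paper's version leaves both to an implicit localization.

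For \eqref{expbdd}, the paper runs It\^o's formula on $e^{\varepsilon\norm{\omega}_H^2}$ along a stationary solution and obtains $\int\norm{y}_{\dot H^1}^2e^{\varepsilon\norm{y}_H^2}\mu(dy)\le\frac{B_0}{\nu}\int e^{\varepsilon\norm{y}_H^2}\mu(dy)$. It then splits at $\norm{y}_H=R$. As written, it cancels the equal terms at times $0$ and $t$ and absorbs half of $\int e^{\varepsilon\norm{y}_H^2}d\mu$, so it tacitly presupposes the finiteness it is proving. Your route avoids that circularity: you prove a Lyapunov bound $\E[e^{\varepsilon\norm{\omega(t)}_H^2}]\le e^{-ct}e^{\varepsilon\norm{\omega_0}_H^2}+C'$ for deterministic data and combine it with invariance through the bounded truncations $f_R$ and $t\to\infty$. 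This needs no stationary process, only the definition of invariance.

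What the paper's route buys is that weighted moment inequality itself, which is reused later in the proof of the exponential-mixing corollary. Once \eqref{expbdd} is established your way, the paper's stationarity computation becomes legitimate and yields that inequality as well.
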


\black
\begin{proof}
    Let $Z(t)=\norm{\omega(t)}_{H}^2+\nu\int_0^t{\norm{\omega(s)}_{\dot{H}^1}^2}ds$.
    Recall that $\iprod{\omega, u_x\omega+2u\omega_x}=0$.
Then, applying It\^{o} formula, i.e. by (\ref{td-Hm}) with $m=0$, we have
$$
        dZ = -\nu\norm{\omega(t)}_{\dot{H}^1}^2dt+2\iprod{\omega(t), d\xi}+B_0 dt.
$$
We apply It\^{o} formula again to $\mathcal{E} = \exp(\varepsilon Z)$ and obtain, similarly to (\ref{eq:Ito_exp}) with $\sigma'=\varepsilon$ and $m=0$, 
\begin{align*}
\mathcal{E}(t)
&= \mathcal{E}(0)
 + \varepsilon \int_0^t \mathcal{E}(s)\Bigg[
    - \nu \norm{\omega(s)}_{\dot H^1}^2
    + B_0
    + 2\varepsilon \sum_{k\in\mathbb{Z}^\ast} b_k^2 
      \langle \omega(s), e_k\rangle_H^2
 \Bigg] \, ds 
 + 2\varepsilon \int_0^t \mathcal{E}(s)
    \,\iprod{\omega(s), d\xi}_H .
\end{align*}

Moreover,
\begin{equation}\label{assum}
    2\varepsilon\sum_{k\in\mathbb{Z}^\ast}b_k^2\iprod{\omega(t),e_k}^2\leq \nu\norm{\omega(t)}_{H}^2\leq\nu\norm{\omega(t)}_{\dot{H}^1}^2
\end{equation}
follows from assumption $2\varepsilon B_0 \le \nu$. Thus,
%\begin{equation}
%    \mathcal{E}%(t)\leq\exp(\varepsilon\norm{\omega_0}_{H}^2)+2\varepsilon\int_0^t\mathcal{E}(s)\iprod{\omega(s), d\xi}+\varepsilon B_0\int_0^t\mathcal{E}(s)ds.
%\end{equation}
taking expectation, we have
$$
    \E[\mathcal{E}(t)]\leq\exp(\varepsilon\norm{\omega_0}_{H}^2) + \varepsilon B_0\int_0^t \E[\mathcal{E}(s)]ds.
$$
Then, equation \eqref{expexp} follows from Gronwall lemma. 

Next, we prove \eqref{expbdd}. Let $\mu$ be an invariant measure. We use a stationary solution $\omega(t)$ of \eqref{gCLMG_eq_r} with invariant law $\mu$ and write $Y = \norm{\omega(t)}_{H}^2$. Similarly, applying the It\^{o} formula to $G=\exp (\varepsilon Y)$, we obtain
\begin{align*}
    &\mathbb{E}\left[ \exp\left( \varepsilon \|\omega(t)\|_{H}^2 \right) \right]
    + 2\varepsilon\nu \mathbb{E}\left[\int_0^t 
        \|\omega(s)\|_{\dot{H}^1}^2 \exp\left(\varepsilon \|\omega(s)\|_{H}^2 \right)ds \right]\notag \\
    &= 2\varepsilon^2 \mathbb{E}\left[\int_0^t \sum_{k \in \mathbb{Z}^\ast}
        b_k^2 \langle \omega(s), e_k \rangle^2
        \exp\left(\varepsilon \|\omega(s)\|_{H}^2 \right) ds \right] \notag \\
    &\quad + \varepsilon B_0 \mathbb{E}\left[ \int_0^t 
        \exp\left( \varepsilon \|\omega(s)\|_{H}^2 \right) ds \right]
    + \mathbb{E}\left[ \exp\left( \varepsilon \|\omega_0\|_{H}^2 \right) \right].
\end{align*}
Using \eqref{assum} again, we have
\begin{align}\label{ito_estimate}
 &\E\left[\exp(\varepsilon\norm{\omega(t)}_{H}^2)\right]
 +\varepsilon\nu\E\left[\int_0^t\norm{\omega(s)}_{\dot{H}_1}^2
   \exp(\varepsilon\norm{\omega(s)}_{H}^2)ds\right] \notag\\
 &\leq \varepsilon B_0\E\left[\int_0^t
   \exp(\varepsilon\norm{\omega(s)}_{H}^2)ds\right]
 + \E\left[\exp(\varepsilon\norm{\omega_0}_{H}^2)\right].
\end{align}
Since $\omega$ is stationary with law $\mu$, we have
$$
    \E\left[\exp(\varepsilon\norm{\omega(t)}_{H}^2)\right]=\E\left[\exp(\varepsilon\norm{\omega_0}_{H}^2)\right]=\int_{\dot{H}^1}\exp{(\varepsilon\norm{y}_{H}^2)\mu(dy)},
$$
and
\begin{align*}
    \E\left[\int_0^t\norm{\omega(s)}_{\dot{H}^1}^2\exp(\varepsilon\norm{\omega(s)}_{H}^2)ds\right]
    &= \int_0^t \int_{\dot{H}^1}\norm{y}_{\dot{H}^1}^2\exp(\varepsilon\norm{y}_{H}^2)\mu(dy) ds\\
    &= t\int_{\dot{H}^1}\norm{y}_{\dot{H}^1}^2\exp(\varepsilon\norm{y}_{H}^2)\mu(dy).
\end{align*}
    We deduce from \eqref{ito_estimate} that
    \begin{align}\label{H^1_reduce}
        \int_{\dot{H}^1}\norm{y}_{\dot{H}^1}^2\exp(\varepsilon\norm{y}_{H}^2)\mu(dy)\leq \frac{B_0}{\nu}\int_{\dot{H}^1}\exp{(\varepsilon\norm{y}_{H}^2)\mu(dy)}.
    \end{align}
    Let $R>0$, then by \eqref{H^1_reduce}, 
    \begin{align*}
        \int_{\dot{H}^1}\exp(\varepsilon\norm{y}_{H}^2)\mu(dy)
        &=\int_{\norm{y}_{H}\leq R}\exp(\varepsilon\norm{y}_{H}^2)\mu(dy)+\int_{\norm{y}_{H}>R}\exp(\varepsilon\norm{y}_{H}^2)\mu(dy)\\
        &\leq \exp(\varepsilon R^2)+\frac{1}{R^2}\int_{\dot{H}^1}\norm{y}_{\dot{H}^1}^2\exp(\varepsilon\norm{y}_{H}^2)\mu(dy)\\
        &\leq \exp(\varepsilon R^2)+ \frac{B_0}{\nu R^2}\int_{\dot{H}^1}\exp{(\varepsilon\norm{y}_{H}^2)\mu(dy)}.
    \end{align*}
    Since $R$ is arbitrary, taking $\dfrac{B_0}{\nu R^2}=\dfrac{1}{2}$, we get \eqref{expbdd}.
\end{proof}

\begin{prop}\label{expmixing} Let $\omega^{(1)}$ and $\omega^{(2)}$ be  solutions of \eqref{gCLMG_eq_r} for initial data $\omega_0^{(1)} \in \dot{H}^1$ and $ \omega_0^{(2)}\in\dot{H}^1$ respectively. Assume $B_{1_\ast} <+\infty$ for some $1_{\ast} >1$. 
    Define $\tilde\omega\coloneqq\omega^{(1)}-\omega^{(2)}$. 
If $\nu^3 \ge 2 C_{\ast} B_0$ \black with $C_{\ast}$ in Lemma \ref{dif_estimate}, then the following inequality holds. 
\begin{equation*}
    \mathbb{E}\left[\norm{\tilde\omega(t)}_{H}^2\right]\leq
    \norm{\tilde\omega_0}_{H}^2\exp\left\{\left(-\frac{\nu}{4}+\frac{C_{\ast}}{\nu^2}B_0\right)t+\frac{C_{\ast}}{2\nu^2}\left(\snorm{\omega_0^{(1)}}_{H}^2+\snorm{\omega_0^{(2)}}_{H}^2\right)\right\}. 
\end{equation*}
%and 
%    \begin{equation}\label{converge}
%    \begin{aligned}
%        \lim_{t\to\infty} &\int_{\dot{H}^m}\int_{\dot{H}^m}{\mathbb{E}\|\omega^{(1)}-\omega^{(2)}\|_{\dot{H}^m}^2}{\mu}(d\omega_0^{(1)})\tilde\mu(d\omega_0^{(2)}) =  0.
%    \end{aligned}   
%    \end{equation}
\end{prop}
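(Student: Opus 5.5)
The plan is to combine the second estimate of Lemma~\ref{dif_estimate} with the exponential moment bound \eqref{expexp} of Lemma~\ref{Arnaud_lemma}. By Lemma~\ref{dif_estimate},
\[
\E\left[\norm{\tilde\omega(t)}_{H}^2\right]\le \norm{\tilde\omega_0}_{H}^2 e^{-\frac{\nu t}{4}}\prod_{i=1}^2\left(\E\left[\exp\int_0^t\frac{C_\ast}{\nu}\snorm{\omega^{(i)}(s)}_{\dot{H}^1}^2ds\right]\right)^{\frac12},
\]
so it suffices to bound each factor $\E[\exp(\frac{C_\ast}{\nu}\int_0^t\snorm{\omega^{(i)}}_{\dot H^1}^2ds)]$ by $\exp\{\frac{C_\ast}{\nu^2}(\snorm{\omega_0^{(i)}}_H^2+tB_0)\}$. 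Taking square roots and multiplying gives exactly the claimed exponent $(-\frac{\nu}{4}+\frac{C_\ast}{\nu^2}B_0)t+\frac{C_\ast}{2\nu^2}(\snorm{\omega^{(1)}_0}_H^2+\snorm{\omega^{(2)}_0}_H^2)$.

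To obtain that bound, apply \eqref{expexp} with the choice $\varepsilon=C_\ast/\nu^2$, so that $\varepsilon\nu\int_0^t\snorm{\omega}_{\dot H^1}^2ds=\frac{C_\ast}{\nu}\int_0^t\snorm{\omega}_{\dot H^1}^2ds$. Dropping the nonnegative term $\varepsilon\norm{\omega(t)}_H^2$ inside the exponential (monotonicity), \eqref{expexp} gives
\[
\E\left[\exp\left(\frac{C_\ast}{\nu}\int_0^t\snorm{\omega^{(i)}}_{\dot H^1}^2ds\right)\right]\le \exp\left(\frac{C_\ast}{\nu^2}\bigl(\snorm{\omega_0^{(i)}}_H^2+tB_0\bigr)\right).
\]
The only hypothesis to check is $2\varepsilon B_0\le\nu$, i.e. $2C_\ast B_0/\nu^2\le\nu$, which is precisely the assumption $\nu^3\ge 2C_\ast B_0$. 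The remaining hypotheses of Lemmas~\ref{dif_estimate} and \ref{Arnaud_lemma} ($a=-2$, data in $\dot H^1$, $B_{1_\ast}<\infty$) coincide with those of the proposition.

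There is essentially no obstacle. The only point needing care is that the constant $C_\ast$ in Lemma~\ref{dif_estimate} must be the one multiplying $\int\snorm{\omega^{(i)}}_{\dot H^1}^2/\nu$. That lemma's Gronwall step also produced $H$-norm terms, but these are dominated by the $\dot H^1$-norms via $\norm{\cdot}_H\le\norm{\cdot}_{\dot H^1}$ and have already been absorbed into $C_\ast$. One should also note that the two Cauchy--Schwarz factors are treated separately, so no independence between $\omega^{(1)}$ and $\omega^{(2)}$ is needed.
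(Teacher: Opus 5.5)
Your proposal is correct and is exactly the paper's proof: take $\varepsilon=C_\ast/\nu^2$ in Lemma~\ref{Arnaud_lemma}, observe that $2\varepsilon B_0\le\nu$ is precisely the hypothesis $\nu^3\ge 2C_\ast B_0$, drop the $\varepsilon\norm{\omega(t)}_H^2$ term, and insert the resulting bound into the second estimate of Lemma~\ref{dif_estimate}. On the constant you flagged: Cauchy--Schwarz applied to the pathwise bound actually doubles the constant in the exponents, so $C_\ast$ in Lemma~\ref{dif_estimate} should be read as twice the Gronwall constant; this is harmless, since the lemma only asserts the existence of some $C_\ast$, and the pathwise bound still holds after enlarging it.
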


\begin{proof}
    Taking $\varepsilon=\dfrac{C_{\ast}}{\nu^2}$ in Lemma \ref{Arnaud_lemma}, we obtain 
    \begin{equation*}
        \E\left[\exp\int_0^t\frac{C_{\ast}\snorm{\omega^{(i)}}_{\dot{H}^1}^2}{\nu} ds\right]
        \leq
        \exp\left(\dfrac{C_{\ast}}{\nu^2}(\snorm{\omega_0^{(i)}}_{H}^2+tB_0)\right), \quad i=1,2
    \end{equation*}
 provided $2B_0 \dfrac{C_{\ast}}{\nu^2}\leq\nu$. 
Hence, if $\nu^3\ge 2C_{\ast} B_0$, using Lemma \ref{dif_estimate},  
we have
 \begin{align*}
     \E\left[\norm{\tilde\omega(t)}_{H}^2\right]
    &\leq \norm{\tilde\omega_0}_{H}^2 \exp\left({-\frac{\nu t}{4}}\right)
    \left(\E\left[\exp\int_0^t\frac{C_{\ast}\snorm{\omega^{(1)}}_{\dot{H}^1}^2}{\nu} ds\right]\right)^\frac{1}{2}
    \left(\E\left[\exp\int_0^t\frac{C_{\ast}\snorm{\omega^{(2)}}_{\dot{H}^1}^2}{\nu} ds\right]\right)^{\frac{1}{2}}\\
    &\leq \norm{\tilde\omega_0}_{H}^2 \exp\left\{\left(-\frac{\nu}{4}+\frac{C_{\ast} B_0}{\nu^2}\right)t+\dfrac{C_{\ast}}{2\nu^2}\snorm{\omega_0^{(1)}}_{H}^2+\dfrac{C_{\ast}}{2\nu^2}\snorm{\omega_0^{(2)}}_{H}^2\right\}.
 \end{align*}
\end{proof}

\begin{theorem}
Let $m\in \N$, $\nu>0$, and $a = -2$. Let $m_{\ast} \in \N$ be such that $m_{\ast}>m$. Assume $B_{m_{\ast}} <+\infty$. 
If  $\nu^3\ge 8 C_{\ast} B_0$ with $C_{\ast}$ in Lemma \ref{dif_estimate}, the invariant measure $\mu\in\mathcal{P}(\dot{H}^m)$ for the transition semigroup $(P_t)_{t\ge 0}$ associated with \eqref{gCLMG_eq_r} is unique.
\end{theorem}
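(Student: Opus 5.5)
Suppose $\mu,\tilde\mu\in\mathcal{P}(\dot H^m)$ are two invariant measures for $(P_t)_{t\ge0}$. The plan is to show $\langle f,\mu\rangle=\langle f,\tilde\mu\rangle$ for every bounded Lipschitz $f$ on $H$ (Lipschitz in the $H$-norm). These functions determine Borel probability measures on $H$. Since $\dot H^m\hookrightarrow H$ continuously and injectively, Borel sets of $\dot H^m$ are Borel in $H$ (Lusin–Souslin), so $\mu=\tilde\mu$ follows. Both measures are concentrated on $\dot H^{m}\subset\dot H^1$, so Proposition~\ref{expmixing} applies to initial data drawn from them.

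By invariance, for every $t\ge0$,
\begin{align*}
\abs{\langle f,\mu\rangle-\langle f,\tilde\mu\rangle}
&=\abs{\int\!\!\int \bigl(P_tf(y)-P_tf(z)\bigr)\mu(dy)\tilde\mu(dz)}\\
&\le \norm{f}_{\mathrm{Lip}}\int\!\!\int \bigl(\E\norm{\omega(t;y)-\omega(t;z)}_H^2\bigr)^{1/2}\mu(dy)\tilde\mu(dz).
\end{align*}
Proposition~\ref{expmixing} gives, provided $\nu^3\ge 2C_\ast B_0$,
\[
\bigl(\E\norm{\omega(t;y)-\omega(t;z)}_H^2\bigr)^{1/2}\le \norm{y-z}_H\exp\Bigl\{\tfrac12\Bigl(-\tfrac{\nu}{4}+\tfrac{C_\ast B_0}{\nu^2}\Bigr)t+\tfrac{C_\ast}{4\nu^2}\bigl(\norm{y}_H^2+\norm{z}_H^2\bigr)\Bigr\}.
\]
Under the stronger hypothesis $\nu^3\ge 8C_\ast B_0$ we have $C_\ast B_0/\nu^2\le \nu/8$. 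The time exponent is therefore at most $-\nu t/16$, which tends to $-\infty$.

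It remains to show that the spatial factor is integrable. Bound $\norm{y-z}_H\le \norm{y}_H+\norm{z}_H\le C(\nu)\exp\bigl(\tfrac{C_\ast}{4\nu^2}(\norm{y}_H^2+\norm{z}_H^2)\bigr)$. The integrand is then at most $C\,e^{\frac{C_\ast}{2\nu^2}\norm{y}_H^2}\,e^{\frac{C_\ast}{2\nu^2}\norm{z}_H^2}$, and the double integral factorizes into a product of single integrals. Each factor is finite by \eqref{expbdd} of Lemma~\ref{Arnaud_lemma} with $\varepsilon=C_\ast/(2\nu^2)$. That lemma requires $2\varepsilon B_0=C_\ast B_0/\nu^2\le\nu$, i.e.\ $\nu^3\ge C_\ast B_0$, which is implied by $\nu^3\ge 8C_\ast B_0$. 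Applying Lemma~\ref{Arnaud_lemma} is legitimate because $\mu$ and $\tilde\mu$ are invariant and live on $\dot H^1$; they are invariant for the same semigroup viewed on $\dot H^1$.

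Letting $t\to\infty$, dominated convergence (or simply the explicit factor $e^{-\nu t/16}$) gives $\langle f,\mu\rangle=\langle f,\tilde\mu\rangle$, hence $\mu=\tilde\mu$.

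The main obstacle is the integrability step. The constants must be matched so that the exponential weight $e^{\frac{C_\ast}{2\nu^2}\norm{y}^2}$ coming from Proposition~\ref{expmixing} (after the square root, and after absorbing $\norm{y-z}_H$) is covered by \eqref{expbdd}. This matching is exactly where the factor $8$ in $\nu^3\ge 8C_\ast B_0$ is used. If the square root were not taken, the weight would be $\frac{C_\ast}{\nu^2}$, which \eqref{expbdd} still covers when $\nu^3\ge 2C_\ast B_0$. So the margin appears sufficient either way, but I would double-check the constants there.
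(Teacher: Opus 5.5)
Your proof is correct, but it takes a different route from the paper's.

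\textbf{The paper's route.} The paper tests against Lipschitz functions on $\dot H^m$, so it has to control $\E\|\omega(t;x)-\omega(t;y)\|_{\dot H^m}$. It interpolates between $H$ and $\dot H^{m+1}$ as in \eqref{Holder}. The $H$-factor decays exponentially by Proposition~\ref{expmixing}. The $\dot H^{m+1}$-factor is bounded uniformly in $t$ via \eqref{Hm_expectation_estimate}, \eqref{support} and Lemma~\ref{Arnaud_lemma}. This costs a factor $1/(2(m+1))$ in the decay rate.

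\textbf{Your route.} You test only against bounded $H$-Lipschitz functions. You then recover $\mu=\tilde\mu$ on $\dot H^m$ because the Borel $\sigma$-algebra of $\dot H^m$ is the trace of that of $H$. This is essentially the argument of the paper's subsequent corollary, turned into a uniqueness proof.

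\textbf{Comparison.} Your route is more economical: it needs only Proposition~\ref{expmixing} and \eqref{expbdd}, and it avoids $\dot H^{m+1}$ moments altogether. In the paper those moments rest on two things. First, \eqref{support}, which as written is established only for the Krylov--Bogoliubov limit measure, not for an arbitrary invariant measure. Second, \eqref{Hm_expectation_estimate} applied at level $m+1$, which the hypothesis $B_{m_*}<\infty$ with $m_*>m$ does not obviously cover. What the paper's route buys, once those ingredients are secured, is convergence in the stronger $\dot H^m$-based Wasserstein distance rather than the $H$-based one.

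\textbf{Correction to your closing remark.} In your argument the integrability step needs only $\nu^3\ge C_*B_0$, and Proposition~\ref{expmixing} needs $\nu^3\ge 2C_*B_0$. The factor $8$ is consumed by the time exponent, not by the integrability matching. Any $\nu^3>4C_*B_0$ would already make $-\frac{\nu}{4}+\frac{C_*B_0}{\nu^2}$ negative and give uniqueness.
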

\begin{proof}
Let $\mu$ and $\tilde\mu$ be the invariant measures for the transition semigroup $(P_t)_{t\ge 0}$ associated with \eqref{gCLMG_eq_r}.
{ Let $\omega(t;x)$ and $\omega(t;y)$ be the solutions of \eqref{gCLMG_eq_r} with the initial datum $x,y$ in $\dot{H}^m$. 
Write $\tilde\omega(t) \coloneqq\omega(t;x)-\omega(t;y)$.}
We then consider the Lipschitz-dual distance  (Kantrovich–Rubinstein distance or Wasserstein-1 distance)  between them. For a Lipschitz function $\varphi$ on $\dot{H}^m$ , we have
\begin{eqnarray} \label{ineq:dif}
        \left|\iprod{\varphi,\mu}-\iprod{\varphi,\tilde\mu}\right|
        &=& \left|\int_{\dot{H}^m}\int_{\dot{H}^m}(P_t\varphi(x)-P_t\varphi(y))\mu(dx)\tilde\mu(dy)\right|\\  \nonumber
        &\leq& \norm{\varphi}_{\mathrm{Lip}}\int_{\dot{H}^m}\int_{\dot{H}^m}{\mathbb{E}\left[\|\omega(t;x)-\omega(t;y)\|_{\dot{H}^m}\right] \black}{\mu}(dx)\tilde\mu(dy). 
\end{eqnarray} 
Note that 
    \begin{equation}\label{Holder}
        \mathbb{E}\left[\norm{\tilde\omega(t)}_{\dot{H}^m}\right]
        \leq \left(\mathbb{E}\left[\norm{\tilde\omega(t)}_{H}^2\right]\right)^{\frac{1}{2(m+1)}}
        \left(\mathbb{E}\left[\norm{\tilde\omega(t)}_{\dot{H}^{m+1}}^2\right]\right)^{\frac{m}{2(m+1)}},
    \end{equation} 
which is proved by the interpolation inequality of \eqref{Sobolev_interpolation} in the Appendix, and by the H\"{o}lder inequality in $d\mathbb{P}$. Using this inequality
\eqref{Holder},  
\begin{align*}
    &\int_{\dot{H}^m} \int_{\dot{H}^m}
    \mathbb{E}\left[ \|\omega(t;x) - \omega(t;y)\|_{\dot{H}^m} \right] \,
    \mu\left(dx\right) \tilde\mu\left(dy\right) \notag \\
    &\leq \left\{
    \int_{\dot{H}^m} \int_{\dot{H}^m}
    \mathbb{E}\left[ \|\tilde\omega(t)\|_{H}^2 \right] \mu\left(d x\right) \tilde\mu\left(dy\right) \right\}^{\frac{1}{2(m+1)}}
    \left\{
    \int_{\dot{H}^m} \int_{\dot{H}^m}
    \mathbb{E}\left[ \|\tilde\omega(t)\|_{\dot{H}^{m+1}}^2 \right] \mu\left(d x\right) \tilde\mu\left(dy\right) \right\}^{\frac{m}{2(m+1)}} \notag \\
\end{align*}
Here, we may estimate using Proposition \ref{expmixing},
\begin{eqnarray*}
&& \left\{\int_{\dot{H}^m} \int_{\dot{H}^m}
    \mathbb{E}\left[ \|\tilde\omega(t)\|_{H}^2 \right] 
    \mu\left(d x\right) \tilde\mu\left(dy\right) \right\}^{\frac{1}{2(m+1)}} \\  
    &\le&  \exp \left(\frac{1}{2(m+1)} \left( -\frac{\nu}{4} + \frac{C_{\ast} B_0}{\nu^2} \right) t  \right) \\
&& \times    \left\{\int_{\dot{H}^m} \int_{\dot{H}^m}
    2(\|x\|_{H}^2 +\|y\|_{H}^2) 
    \exp\left(\frac{C_{\ast}}{2\nu^2} \|x\|_{H}^2
            + \frac{C_{\ast}}{2\nu^2} \|y\|_{H}^2
    \right) 
    \mu\left(dx\right) \tilde\mu\left(dy\right)\right\}^{\frac{1}{2(m+1)}}.
\end{eqnarray*}
The integrals in the curly brackets are finite owing to Lemma \ref{Arnaud_lemma} and \eqref{support}, thus this quantity converges to zero as $t \to +\infty$ if  $\nu^3\ge 8 C_{\ast} B_0$. 

%Noting that \eqref{support} indicates $\mu(\dot{H}^m/\dot{H}^{m+1})=0$ for any invariant measure $\mu$, we can change the domain of the integration from $\dot{H}^m$ to $\dot{H}^{m+1}$. Hence, retaking the constant appearing in \eqref{Hm_expectation_estimate} if necessary, by young inequality and H\"{o}lder inequality, we have
Next, by (\ref{Hm_expectation_estimate}) with $\sigma'=\frac{C_\ast}{2\nu^2}$, 
\begin{eqnarray*}
&& \int_{\dot{H}^m} \int_{\dot{H}^m}
    \mathbb{E}\left[ \|\tilde\omega(t)\|_{\dot{H}^{m+1}}^2 \right] 
    \mu\left(d x\right) \tilde\mu\left(dy\right) \\
&\le& 
\int_{\dot{H}^m} \int_{\dot{H}^m}
\left(1+ \|x\|^2_{\dot{H}^{m+1}}
+ \|y\|^2_{\dot{H}^{m+1}}
+ \exp\left( \frac{C_\ast}{2\nu^2}\|x\|_{H}^2 \right)
+ \exp\left(\frac{C_\ast}{2\nu^2} \|y\|_{H}^2 \right)\right) \,
\mu\left(dx\right) \tilde\mu\left(dy\right),  
\end{eqnarray*}
which is again finite due to Lemmas \ref{Arnaud_lemma} and \eqref{support}. 
Consequently, we see that the quantity (\ref{ineq:dif}) converges as $t \to +\infty$ when $\nu$ is sufficiently large. \end{proof}
%\begin{align}
%    &\int_{\dot{H}^m} \int_{\dot{H}^m} 
%    \mathbb{E}\left[ \|\omega^{(1)} - \omega^{(2)}\|_{\dot{H}^m}^2 \right] \,
%    \mu\left(d\omega_0^{(1)}\right) \tilde\mu\left(d\omega_0^{(2)}\right) \\
%    &\leq 
%    C_{m+1}^\prime 
%    \exp\left( \frac{t}{m+1} \left( -\frac{\nu}{4} + \frac{C_1 B_0}{\nu^2} \right) \right)
%    \int_{\dot{H}^m} \int_{\dot{H}^m}
%    \|\tilde\omega_0\|_{H}^{\frac{1}{m+1}} \,
%    \exp\left( \frac{1}{m+1} \cdot \frac{C_1}{\nu^2} \left( %\|\omega_0^{(1)}\|_{H}^2 + \|\omega_0^{(2)}\|_{H}^2 \right) \right) %\notag \\
%    &\quad 
%\end{align}
%Recalling \eqref{support} and \eqref{expbdd}, we show that the last integral converges. Therefore, if $\nu^3\ge C_0B_0$ for some $C_0>0$, the right-hand side converges to zero as $t\to\infty$.

\begin{corollary}Let $m\in \N$, $\nu>0$, $a = -2$ and $\omega_0\in \dot{H}^1.$ Assume $B_{1_{\ast}}<+\infty$ for some $1_{\ast}>1$. 
If  $\nu^3\ge 8 C_{\ast} B_0$  with $C_{\ast}$ in Lemma \ref{dif_estimate}, 
the stochastic process $(\omega_t)_{t\ge 0}$, where $\omega_{.}$ is the solution of \eqref{gCLMG_eq_r} with the value in $X^1_{\infty}$, is exponential mixing in the sense of a dual Lipschitz distance based on $L^2$. 
\end{corollary}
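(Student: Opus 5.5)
The plan is to show that for every Lipschitz function $\varphi$ on $H$ (with respect to the $L^2$ norm) and all initial data $x,y\in\dot{H}^1$,
\[
\abs{P_t\varphi(x)-P_t\varphi(y)}\leq \norm{\varphi}_{\mathrm{Lip}}\,\mathbb{E}\left[\norm{\omega(t;x)-\omega(t;y)}_{H}\right],
\]
and then to bound the right-hand side with Proposition \ref{expmixing}. By the Cauchy--Schwarz inequality in $d\mathbb{P}$, Proposition \ref{expmixing} gives
\[
\mathbb{E}\left[\norm{\tilde\omega(t)}_{H}\right]\leq \norm{x-y}_{H}\exp\left\{\frac12\left(-\frac{\nu}{4}+\frac{C_\ast B_0}{\nu^2}\right)t+\frac{C_\ast}{4\nu^2}\left(\norm{x}_H^2+\norm{y}_H^2\right)\right\}.
\]
Under $\nu^3\ge 8C_\ast B_0$ we have $C_\ast B_0/\nu^2\le \nu/8$, so the rate is at most $-\nu/16$. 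This already gives exponential contraction of the transition semigroup in the dual Lipschitz distance $d(\mu_1,\mu_2)=\sup_{\norm{\varphi}_{\mathrm{Lip}}\le 1,\,\norm{\varphi}_\infty\le1}\abs{\iprod{\varphi,\mu_1}-\iprod{\varphi,\mu_2}}$, with a prefactor depending on the initial data through $\exp(C_\ast(\norm{x}_H^2+\norm{y}_H^2)/(4\nu^2))$.

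Next I would pass from two Dirac initial data to the pair $(\delta_{\omega_0},\mu)$, where $\mu$ is the unique invariant measure from the preceding theorem (applied with $m=1$). Invariance gives $\iprod{\varphi,\mu}=\int P_t\varphi(y)\,\mu(dy)$, so
\[
\abs{P_t\varphi(\omega_0)-\iprod{\varphi,\mu}}\leq \norm{\varphi}_{\mathrm{Lip}}\,e^{-\nu t/16}\,\norm{\omega_0}_H\,e^{\frac{C_\ast}{4\nu^2}\norm{\omega_0}_H^2}\int_{\dot{H}^1}\left(1+\norm{y}_H\right)e^{\frac{C_\ast}{4\nu^2}\norm{y}_H^2}\mu(dy).
\]
Here I used $\norm{\omega_0-y}_H\le\norm{\omega_0}_H+\norm{y}_H$; the factor $\norm{\omega_0}_H$ should be read as $1+\norm{\omega_0}_H$ after absorbing constants. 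It remains to check that the last integral is finite. I would use \eqref{expbdd} of Lemma \ref{Arnaud_lemma} with a slightly larger exponent $\varepsilon$, for instance $\varepsilon=C_\ast/(2\nu^2)$, which is admissible since $2\varepsilon B_0=C_\ast B_0/\nu^2\le\nu/8\le\nu$. The polynomial factor $\norm{y}_H$ is then absorbed by the gap between the two exponents. Taking the supremum over $\varphi$ yields
\[
d\left(P_t^\ast\delta_{\omega_0},\mu\right)\le C(\omega_0)\,e^{-\nu t/16},
\]
which is exponential mixing.

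The main obstacle is that Proposition \ref{expmixing} controls only second moments and carries the non-uniform factor $e^{C_\ast\norm{\omega_0}_H^2/(2\nu^2)}$. The argument therefore needs the exponential integrability of $\mu$ at a strictly positive exponent. One must also check that the condition $2\varepsilon B_0\le\nu$ in Lemma \ref{Arnaud_lemma} holds for the chosen $\varepsilon$; this is exactly where the constant $8$ in $\nu^3\ge 8C_\ast B_0$ is used. A second point is that the invariant measure is a priori on $\dot H^m$. For $m=1$ it is supported in $\dot H^2\subset\dot H^1$ by \eqref{support}, so Proposition \ref{expmixing} applies $\mu$-almost surely. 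Finally, for random $\mathcal F_0$-measurable $\omega_0$ with $\mathbb{E}\,e^{\varepsilon\norm{\omega_0}_H^2}<\infty$, I would integrate the bound over the law of $\omega_0$ using the Markov property, as in Section 3.
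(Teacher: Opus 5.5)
Your proposal is correct and is essentially the paper's proof: invariance of $\mu$ and the synchronous coupling reduce $|P_t\varphi(\omega_0)-\langle\varphi,\mu\rangle|$ to $\|\varphi\|_{\mathrm{Lip}}\int\mathbb{E}\|\omega(t;\omega_0)-\omega(t;y)\|_H\,\mu(dy)$, and Proposition \ref{expmixing} together with the exponential moments of Lemma \ref{Arnaud_lemma} bounds this by $C(\omega_0)e^{-\nu t/16}$. The only differences are minor. You apply Cauchy--Schwarz in $d\mathbb{P}$ for each fixed $y$ and absorb $\|y\|_H$ into the gap between the exponents $C_\ast/(4\nu^2)$ and $C_\ast/(2\nu^2)$, whereas the paper applies it in $d\mathbb{P}\otimes\mu$ and controls the polynomial factor with \eqref{H^1_reduce}. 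Also, the factor $8$ is what gives the rate $-\nu/4+C_\ast B_0/\nu^2\le-\nu/8$; it is not needed for admissibility of $\varepsilon=C_\ast/(2\nu^2)$, which only requires $\nu^3\ge C_\ast B_0$.
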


\begin{proof}
The exponential mixing for Lipschitz dual distance based on $L^2$ is straightforward since we have Proposition \ref{expmixing}. Indeed, for any deterministic initial data $\omega_0 \in \dot{H}^1$ and any Lipschitz function $\varphi$  on $H$, we have   
\begin{eqnarray}
|P_t \varphi (\omega_0) -\langle \varphi, \mu\rangle|
&=& \Big|P_t \varphi (\omega_0)  - \int_{\dot{H}^1} P_t \varphi(y) \mu(dy) \Big|  \\ \nonumber
&=& 
\Big| \int_{\dot{H}^1}  (P_t \varphi (\omega_0) - P_t \varphi(y)) \mu(dy) \Big|  \\ \nonumber
&\le & \|\varphi\|_{Lip} 
\int_{\dot{H}^1} \E\left[\|\omega(t;\omega_0)-\omega(t;y)\|_{{H}}\right]  \mu(dy) \\ \label{conv}
&\le & 
\|\varphi\|_{Lip} 
\left\{\int_{\dot{H}^1} \E\left[\|\omega(t;\omega_0)-\omega(t;y)\|^2_{H}\right]  \mu(dy) \right\}^{\frac12}.
\end{eqnarray}  
It follows from Proposition \ref{expmixing} that 
\begin{eqnarray*}
&& \int_{\dot{H}^1}\E\left[\|\omega(t;\omega_0)-\omega(t;y)\|^2_{H}\right] \mu(dy)\\
&\le & \int_{\dot{H}^1}
\norm{\omega_0 -y}_{H}^2 \exp\left\{\left(-\frac{\nu}{4}+\frac{C_{\ast} B_0}{\nu^2}\right)t+\dfrac{C_{\ast}}{2\nu^2}\snorm{\omega_0}_{H}^2+\dfrac{C_{\ast}}{2\nu^2}\snorm{y}_{H}^2\right\} \mu(dy)\\
&\le& 
C e^{-\frac{\nu t}{8}} \left(\snorm{\omega_0}_{H}^2+
\frac{B_0}{\nu}\right) \exp \left(\frac{C_{\ast}}{2\nu^2}\snorm{\omega_0}_{H}^2 \right)
\int_{\dot{H}^1} \exp \left(\frac{C_{\ast}}{2\nu^2}\snorm{y}_{H}^2 \right) \mu(dy). 
\end{eqnarray*}
thanks to (\ref{H^1_reduce}) with $\varepsilon=\frac{C_\ast}{2\nu^2}.$ Then,  by (\ref{expbdd}) with $\frac{C_\ast}{2\nu^2}$, 
the right hand side of (\ref{conv}) converges exponentially as $t\to +\infty$ provided $\nu^3 \ge 8C_\ast B_0$.
\end{proof}

\black

\section{Summary and future works}
We have established global well-posedness and the existence of an invariant measure for the stochastic gCLMG equation \eqref{gCLMG_eq} with \(a = -2\), thus providing a rigorous dynamical system framework for the model to investigate turbulent flow.
Whereas its existence is guaranteed for any viscous coefficient $\nu>0$, its uniqueness and ergodicity have been successfully established in the large-viscous regime. 
As mentioned in the introduction, toward the theoretical understanding of the statistical law of turbulence generated by the stochastic gCLMG equation, it is important to construct the unique invariant measure when the viscosity coefficient is sufficiently small. 
In this sense, the present work is the first step from the perspective of dynamical system theory.
The next step is to establish the uniqueness of the invariant measure for all $\nu >0$ and to clarify the statistical scaling property of the energy with respect to the invariant measure, which is a future work.

Mathematically, our analysis is based on the $L^2$-framework, where the conservation of the $L^2$ norm of the inviscid solution plays an important role for $a=-2$. 
On the other hand, for $-1 \leqq a \leqq -4$, a similar statistical scaling law of energy has also been observed numerically ~\cite{MS-26}.
The existence of an invariant measure in this range of $a$ is expected, but our mathematical analysis presented here is not applicable as is, since the $L^2$-norm of the solution is no longer a conserved quantity.
Extending the present analysis to other values of $a\neq -2$ is another interesting future work.

\black
\section*{Appendix}
We provide some known results that are used in this paper.
% (H^mの定義、C'が謎。これは書かなくてもよい？)
%\begin{definition}
	%$n\in \N,m \in \R$と$f \in C^{\infty}(S^{n})'$に対して
	%$\norm{f}_{H^{m}} \coloneqq \norm{\iprod{n}^{m} \hat{f}(n)}_{l^{2}}$
	%と定義する．ここで$n \in \mathbb{Z}$に対して
	%$\iprod{n}\coloneqq (1+\abs{n}^{2})^{\frac{1}{2}}$とし，
	%$\hat{f}(n) \coloneqq \iprod{f,e^{-inx}}/2\pi$によって&$\hat{f}$を定めた．
	%$H^{m}(S^{n}) \coloneqq \{f \in C^{\infty}(S^{n})' \mid %\norm{f}_{H^{m}} <\infty\}$
	%によって$H^{m}(S^{n})$を定義する．
%\end{definition}
% Also, we can check
% \[
%     \mathcal{H}(e_k) = 
%     \begin{cases}
%         e_{-k} & (k \ge 1) \\
%         -e_{-k} & (k \le -1)
%     \end{cases}
% \] where $\{e_k\}_{k\in\mathbb{Z}^{\ast}}$ is the basis we introduced in this paper.
%By the definition of the Hilbert transform,  $u$ in the equation \eqref{gCLMG_eq_r} is represented as   $u = -(-\partial^2_x)^{-\frac{1}{2}}\omega$.%\black
The following result follows from the definition of the Hilbert transform and the norm of $\dot{H}^m$.

\begin{prop}\label{Hilbert}
	For $m\ge 0$, $m\in \N$ \black and $f \in \dot{H}^{m}(\mathbb{S}^{1})$, we have $\norm{\mathcal{H}(f)}_{\dot{H}^{m}(\mathbb{S}^{1})} = \norm{f}_{\dot{H}^{m}(\mathbb{S}^{1})}$.
\end{prop}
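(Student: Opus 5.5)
The identity is checked coefficient by coefficient in the basis $\{e_k\}_{k\in\mathbb{Z}^\ast}$. The plan is to show that $\mathcal{H}$ acts on this real basis as a signed permutation that pairs $e_k$ with $e_{-k}$. Since $|k| = |-k|$, the $\dot{H}^m$ weight $|k|^{2m}$ is unchanged by this pairing, and the norm identity then follows by summing.

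\textbf{Step 1: action on the basis.} Write $\cos(kx)$ and $\sin(kx)$ through $e^{\pm ikx}$ and apply the multiplier $-i\,\mathrm{sgn}(\cdot)$ frequency by frequency. For $k \ge 1$ this gives
\[
\mathcal{H}(\cos kx) = \sin kx, \qquad \mathcal{H}(\sin kx) = -\cos kx .
\]
In terms of the paper's basis, this means $\mathcal{H}(e_k) = e_{-k}$ for $k \ge 1$ and $\mathcal{H}(e_k) = -e_{-k}$ for $k \le -1$. The only point needing care is the sign convention relating the complex Fourier multiplier to the real basis, and the overall sign does not affect norms.

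\textbf{Step 2: action on Fourier coefficients.} For $f = \sum_k f_k e_k \in \dot{H}^m$, linearity and continuity of $\mathcal{H}$ on $H$ give $(\mathcal{H}f)_{-k} = \pm f_k$. Continuity holds because the multiplier has modulus at most one, so $\mathcal{H}$ is bounded on $L^2$.

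\textbf{Step 3: the norm identity.} Combining the two steps,
\[
\norm{\mathcal{H}f}_{\dot{H}^m}^2 = \sum_{k\in\mathbb{Z}^\ast} |{-k}|^{2m} f_k^2 = \sum_{k\in\mathbb{Z}^\ast} |k|^{2m} f_k^2 = \norm{f}_{\dot{H}^m}^2 .
\]
Because $f$ has zero mean, no $k=0$ term appears, so the multiplier has modulus exactly one on every frequency present. This step uses only Parseval and reindexing $k \mapsto -k$, and the argument also shows that $\mathcal{H}f \in \dot{H}^m$. I expect no real obstacle; the only place where care is needed is the sign bookkeeping in Step 1.
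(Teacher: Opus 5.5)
Your proof is correct and is essentially the paper's argument. The paper simply says the identity follows from the definition of $\mathcal{H}$ as the Fourier multiplier $-i\,\mathrm{sgn}(k)$, which has modulus one on every nonzero frequency, together with the evenness of the weight $|k|^{2m}$; your Steps 1--3 spell this out in the paper's real basis, where $\mathcal{H}(e_k)=e_{-k}$ for $k\ge 1$ and $\mathcal{H}(e_k)=-e_{-k}$ for $k\le -1$.
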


Basically, the Hilbert transform is defined by an integral operator with singular kernel.
Some important properties such as $L^{p}(1<p<\infty)$ boundedness are found in a text book \cite{king_2009} for example.
%\begin{prop}[\cite{DaPrato_Zabczyk_2014}; Proposition 3.16]\label{Kolmogrov} \blue The norm is unclear \black
%    Let X be Gaussian on Hilbert space H. Assume that $\E(X(t))=0, t\ge 0$, and that there exist $M>0$ and $\gamma\in (0,1]$ such that
%    \[
%    \E\left[\norm{X(t)-X(s)}^2\right]\leq M(t-s)^\gamma, \quad \forall  t, s\ge 0.
%    \]
%    Then $X$ has an $\alpha$-H\"{o}lder continuous version, for any $\alpha\in (0, \frac{\gamma}{2})$.
%\end{prop}
The following three inequalities are used in this paper. See~\cite{Boritchev_Kuksin_2021_onedimensional} for the proof.
\begin{prop} 
For $\sigma>1/2$ and $f,g \in \dot{H}^{\sigma}(\mathbb{S}^{1})$, 	
\begin{equation} \label{Hm_product}
\norm{fg}_{\dot{H}^\sigma(\mathbb{S}^{1})} \leq \norm{f}_{\dot{H}^\sigma(\mathbb{S}^{1})} \norm{g}_{\dot{H}^\sigma(\mathbb{S}^{1})}.
\end{equation}
\end{prop}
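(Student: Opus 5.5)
The plan is to work entirely on the Fourier side, using complex exponentials instead of the real basis $\{e_k\}$. Write $f=\sum_{k\neq 0}\hat f(k)e^{ikx}$ and $g=\sum_{k\neq0}\hat g(k)e^{ikx}$. Up to a fixed normalisation factor coming from the $1/\sqrt{\pi}$ in $e_k$, $\norm{f}_{\dot H^\sigma}^2$ is comparable to $\sum_{k\neq 0}|k|^{2\sigma}|\hat f(k)|^2$. Both $f$ and $g$ have zero mean, so $\hat f(0)=\hat g(0)=0$. The product then has coefficients
\[
\widehat{fg}(n)=\sum_{k\neq 0,\ n-k\neq 0}\hat f(k)\,\hat g(n-k).
\]
The $\dot H^\sigma$ norm of $fg$ only sees the modes $n\neq 0$, so any nonzero mean of $fg$ is irrelevant.

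The key pointwise step is the elementary inequality
\[
|n|^\sigma\le c_\sigma\bigl(|k|^\sigma+|n-k|^\sigma\bigr),\qquad c_\sigma=\max\{1,2^{\sigma-1}\},
\]
which I will use to distribute the weight $|n|^\sigma$ onto one of the two factors. This gives
\[
|n|^\sigma|\widehat{fg}(n)|\le c_\sigma\Bigl(\bigl(|k|^\sigma|\hat f|\bigr)*|\hat g|\Bigr)(n)+c_\sigma\Bigl(|\hat f|*\bigl(|k|^\sigma|\hat g|\bigr)\Bigr)(n).
\]
Each term is then bounded by Young's inequality $\ell^2*\ell^1\subset\ell^2$. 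For the first term this yields
\[
\bigl\|\bigl(|k|^\sigma|\hat f|\bigr)*|\hat g|\bigr\|_{\ell^2}\le \norm{f}_{\dot H^\sigma}\,\|\hat g\|_{\ell^1}.
\]
By Cauchy--Schwarz,
\[
\|\hat g\|_{\ell^1}=\sum_{k\neq0}|k|^{-\sigma}\,|k|^{\sigma}|\hat g(k)|\le\Bigl(\sum_{k\neq 0}|k|^{-2\sigma}\Bigr)^{1/2}\norm{g}_{\dot H^\sigma}.
\]
This is exactly where $\sigma>1/2$ is used: it makes $\sum_{k\neq0}|k|^{-2\sigma}=2\zeta(2\sigma)$ finite. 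The symmetric term is handled in the same way with the roles of $f$ and $g$ swapped.

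Combining the two terms gives $\norm{fg}_{\dot H^\sigma}\le C(\sigma)\norm{f}_{\dot H^\sigma}\norm{g}_{\dot H^\sigma}$, with $C(\sigma)$ depending only on $c_\sigma$, $\zeta(2\sigma)$ and the Fourier normalisation. The only delicate point I expect is this bookkeeping of the constant. The argument naturally produces a constant $C(\sigma)$, not the constant $1$ displayed in \eqref{Hm_product}. As \eqref{Hm_product} is used in the body of the paper (always with a prefactor $C(m)$), only the existence of some constant depending on $\sigma$ matters. I would therefore prove the estimate with $C(\sigma)$ and read \eqref{Hm_product} as holding up to that constant, as in \cite{Boritchev_Kuksin_2021_onedimensional}. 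Alternatively, one can rescale the norm by $C(\sigma)$, since the rescaled norm $\widetilde{\|\cdot\|}=C(\sigma)\norm{\cdot}_{\dot H^\sigma}$ satisfies the same inequality with constant one.
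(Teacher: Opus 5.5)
The paper gives no proof of this proposition; it only refers to Boritchev--Kuksin. Your Fourier-side argument is the standard proof, and it is correct: you split the weight with $|n|^\sigma\le c_\sigma(|k|^\sigma+|n-k|^\sigma)$, apply Young's inequality $\ell^2*\ell^1\subset\ell^2$, and use Cauchy--Schwarz with $\sum_{k\neq0}|k|^{-2\sigma}<\infty$. That last step is exactly where $\sigma>1/2$ and $\hat g(0)=0$ enter. The normalisation is in fact exact, $\|f\|_{\dot H^\sigma}^2=2\pi\sum_{k\neq0}|k|^{2\sigma}|\hat f(k)|^2$, and tracking it gives $C(\sigma)=2c_\sigma\sqrt{\zeta(2\sigma)/\pi}$.

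Proving the estimate with a constant $C(\sigma)$ is not just convenient: with the paper's normalisation, the constant-one form is false.
\begin{itemize}
\item Take $f=e_1=\pi^{-1/2}\cos x$ and $g=e_{-1}=\pi^{-1/2}\sin x$. Then $\|f\|_{\dot H^m}=\|g\|_{\dot H^m}=1$.
\item The product is $fg=\frac{1}{2\pi}\sin 2x=\frac{1}{2\sqrt\pi}\,e_{-2}$, which has mean zero, and $\|fg\|_{\dot H^m}=2^m/(2\sqrt\pi)$.
\item For $m=2$ this equals $2/\sqrt\pi>1$.
\item In addition, the best constant must blow up as $\sigma\downarrow1/2$, because $\dot H^{1/2}$ is not closed under multiplication. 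For example, $(\log(1/|x|))^\beta$ with $1/4\le\beta<1/2$, localised near $0$, lies in $H^{1/2}$ but its square does not.
\end{itemize}
So the correct statement is the one you prove, $\|fg\|_{\dot H^\sigma}\le C(\sigma)\|f\|_{\dot H^\sigma}\|g\|_{\dot H^\sigma}$. Every use in the paper either already carries a factor $C(m)$ or feeds into bounds with unspecified constants, so nothing downstream is affected.

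Your renorming alternative gives a constant-one inequality only for a different norm. It therefore does not rescue the literal statement for the paper's $\dot H^\sigma$ norm and is best dropped.
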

\begin{prop}[Sobolev interpolation] 
Let $\sigma=(1-\theta)\sigma_1+\theta \sigma_2$ for $0\leq \sigma_{1}\leq \sigma_{2}$ and $\theta \in [0,1]$. Then $f \in \dot{H}^{\sigma_{1}}(\mathbb{S}^{1}) \cap \dot{H}^{\sigma_{2}}(\mathbb{S}^{1}) $ implies $f \in \dot{H}^{\sigma}(\mathbb{S}^{1})$.
In addition, the following inequality holds.
\begin{equation}\label{Sobolev_interpolation}
\norm{f}_{\dot{H}^{\sigma}(\mathbb{S}^{1})} \leq \norm{f}_{\dot{H}^{\sigma_{1}}(\mathbb{S}^{1})}^{1-\theta} \norm{f}_{\dot{H}^{\sigma_{2}}(\mathbb{S}^{1})}^{\theta}
\end{equation}
\end{prop}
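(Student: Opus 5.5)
The statement to prove is the Sobolev interpolation inequality \eqref{Sobolev_interpolation} on $\mathbb{S}^1$. I would work entirely on the Fourier side, since the norms are defined through the coefficients: $\norm{f}_{\dot{H}^\sigma}^2=\sum_{k\in\Z^*}|k|^{2\sigma}f_k^2$. The plan is to write the weight at the intermediate index as a product of weights at the two endpoint indices and then apply H\"older's inequality for sums.

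\textbf{Splitting the weight.} Since $\sigma=(1-\theta)\sigma_1+\theta\sigma_2$, for each $k\in\Z^*$ we have
\[
|k|^{2\sigma}f_k^2=\bigl(|k|^{2\sigma_1}f_k^2\bigr)^{1-\theta}\bigl(|k|^{2\sigma_2}f_k^2\bigr)^{\theta}.
\]

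\textbf{H\"older with exponents $p=1/(1-\theta)$ and $q=1/\theta$.} For $\theta\in(0,1)$ these exponents are conjugate. Summing over $k$ and applying H\"older gives
\[
\sum_k|k|^{2\sigma}f_k^2\le\Bigl(\sum_k|k|^{2\sigma_1}f_k^2\Bigr)^{1-\theta}\Bigl(\sum_k|k|^{2\sigma_2}f_k^2\Bigr)^{\theta}.
\]
The right-hand side is finite by the hypothesis $f\in\dot{H}^{\sigma_1}\cap\dot{H}^{\sigma_2}$. This yields both the membership $f\in\dot{H}^\sigma$ and, after taking square roots, the inequality \eqref{Sobolev_interpolation}.

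\textbf{Endpoint cases.} For $\theta=0$ we have $\sigma=\sigma_1$, and for $\theta=1$ we have $\sigma=\sigma_2$. In both cases the inequality is an equality.

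\textbf{Non-integer indices.} The paper defines $\dot{H}^m$ only for $m\in\N$, but the proposition is stated for real $\sigma\ge0$. I would extend the same Fourier definition to real indices; this is consistent with how $\dot{H}^m$ is defined for integers.

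I do not expect a real obstacle. The only point needing care is getting the H\"older exponents right and handling the degenerate values of $\theta$.
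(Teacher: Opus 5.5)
Your argument is correct. Splitting $|k|^{2\sigma}f_k^2=(|k|^{2\sigma_1}f_k^2)^{1-\theta}(|k|^{2\sigma_2}f_k^2)^{\theta}$ and applying H\"older with exponents $1/(1-\theta)$ and $1/\theta$ gives the squared inequality, and the endpoints $\theta\in\{0,1\}$ are trivial (with the convention $0^0=1$). The paper does not prove this proposition itself but defers to Boritchev--Kuksin; your H\"older computation on the Fourier coefficients is the standard proof of this inequality.
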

\begin{prop}[Gagliardo-Nirenberg inequality; \cite{Boritchev_Kuksin_2021_onedimensional}, Lemma 1.3.3 or \cite{bahouri2011fourier}, Theorem 2.44]\label{Gagliardo_Nirenberg_ineq}
Let $\alpha \black>\beta\ge 0$ and $1\leq p,q, \gamma \leq \infty$.
Then there exists $C(\beta,p,q, \gamma,\alpha \black)>0$ such that 
\begin{equation}
	\norm{\partial^{\beta}h}_{L^{ \gamma \black}(\mathbb{S}^{1})} \leq C\norm{\partial^{\alpha \black}h}_{L^{p}(\mathbb{S}^{1})}^{\theta} \norm{h}_{L^{q}(\mathbb{S}^{1})}^{1-\theta},
	\label{GN_ineq}
\end{equation}
holds for any $\theta \in [\frac{\beta}{\alpha },1)$ satisfying $\beta - \frac{1}{\gamma \black} 
= \theta( \alpha -\frac{1}{p}) - (1-\theta)\frac{1}{q}$.
\end{prop}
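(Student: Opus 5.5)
The plan is to prove \eqref{GN_ineq} for \emph{mean-zero} $h$, which is the only case used in the paper: it is applied to $u^N$ and $\omega^N$, both in $H$. Some restriction of this kind is needed, since for $h\equiv 1$ and $\beta=0$ the right-hand side vanishes while the left does not. The method is a dyadic (Littlewood--Paley) frequency decomposition on $\mathbb{S}^1$, followed by a choice of cut-off frequency that balances the two terms.

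\textbf{Step 1 (decomposition and Bernstein bounds).} Fix a smooth dyadic partition of unity on $\mathbb{Z}^\ast$ and write $h=\sum_{j\ge0}\Delta_j h$, where $\Delta_j$ localizes to $|k|\sim 2^j$. The frequency $k=0$ does not appear because $h$ has zero mean. Each $\Delta_j$ is convolution with a kernel whose $L^1(\mathbb{S}^1)$ norm is bounded uniformly in $j$, so $\Delta_j$ is bounded on every $L^r$, $1\le r\le\infty$, including the endpoints. The Bernstein inequalities then give
\[
\norm{\partial^\beta\Delta_j h}_{L^\gamma}\lesssim 2^{j(\beta+\frac1q-\frac1\gamma)}\norm{h}_{L^q},\qquad
\norm{\partial^\beta\Delta_j h}_{L^\gamma}\lesssim 2^{j(\beta-\alpha+\frac1p-\frac1\gamma)}\norm{\partial^\alpha h}_{L^p}.
\]
The second bound uses $\Delta_j h=\partial^{-\alpha}\widetilde\Delta_j\partial^\alpha h$, where $\partial^{-\alpha}\widetilde\Delta_j$ is a multiplier of size $2^{-j\alpha}$ with an $L^1$-bounded kernel.

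\textbf{Step 2 (splitting and optimization).} For an integer $J\ge0$, bound $\norm{\partial^\beta h}_{L^\gamma}$ by the sum of the first estimate over $j\le J$ and the second over $j>J$. Set $a=\beta+\frac1q-\frac1\gamma$ and $b=\alpha-\beta-\frac1p+\frac1\gamma$. The scaling relation in the statement reads $\theta(a+b)=a$, so $a,b>0$ precisely when $0<\theta<1$. In that case both geometric sums converge and
\[
\norm{\partial^\beta h}_{L^\gamma}\lesssim 2^{Ja}\norm{h}_{L^q}+2^{-Jb}\norm{\partial^\alpha h}_{L^p}.
\]
Choose $2^{J}\approx(\norm{\partial^\alpha h}_{L^p}/\norm{h}_{L^q})^{1/(a+b)}$; this makes the two terms equal and yields the product $\norm{\partial^\alpha h}_{L^p}^{\theta}\norm{h}_{L^q}^{1-\theta}$.

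\textbf{Step 3 (the main obstacle: discreteness of frequencies and borderline cases).} I expect the real difficulty here, because on the torus $J$ cannot be negative. Suppose the optimal $2^J$ is less than $1$, i.e.\ $\norm{\partial^\alpha h}_{L^p}\lesssim\norm{h}_{L^q}$. Then I would take $J=0$ and use the mean-zero property to get the Poincar\'e-type bound $\norm{h}_{L^q}\lesssim\norm{\partial^\alpha h}_{L^p}$, since all frequencies satisfy $|k|\ge1$ and the tail sum already controls $h$. With this, $\norm{\partial^\beta h}_{L^\gamma}\lesssim\norm{\partial^\alpha h}_{L^p}$, which is comparable to $\norm{\partial^\alpha h}_{L^p}^\theta\norm{h}_{L^q}^{1-\theta}$ in this regime.

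It remains to handle the cases where $a$ or $b$ is zero or negative, which can happen at the endpoint $\theta=\beta/\alpha$ or with $\gamma=\infty$. When a geometric sum stops converging, I would replace the crude summation by a slightly non-sharp exponent and interpolate. If that fails, I would fall back on the cited references (\cite{Boritchev_Kuksin_2021_onedimensional}, Lemma 1.3.3; \cite{bahouri2011fourier}, Theorem 2.44) for those endpoint cases. None of these endpoint cases arises in the applications in Theorem~\ref{Stochastic_Hm_estimate}, where $p=q=2$ and $\gamma=2m/k$.
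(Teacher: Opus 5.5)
The paper gives no proof of this proposition; it only quotes Boritchev--Kuksin and Bahouri--Chemin--Danchin. Your Littlewood--Paley argument does prove the inequality for mean-zero $h$ when $\gamma\ge\max(p,q)$ and $0<\theta<1$. That range covers every use in the paper ($p=q=2\le\gamma=2m/k$). Your mean-zero caveat is also right. It matters only for $\beta=0$: when $\beta\ge1$, replacing $h$ by $h-\bar h$ changes no derivative and at most doubles $\|h\|_{L^q}$.

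\textbf{Gap in Step 1.} As a proof of the proposition as stated, Step 1 fails on part of the parameter range.
\begin{itemize}
\item The bound $\|\partial^\beta\Delta_j h\|_{L^\gamma}\lesssim 2^{j(\beta+\frac1q-\frac1\gamma)}\|h\|_{L^q}$ holds only for $\gamma\ge q$, and your second bound only for $\gamma\ge p$. Bernstein gains only when the integrability exponent goes up.
\item For $\gamma<q$, the best block estimate on $\mathbb{S}^1$ is the H\"older one, $2^{j\beta}\|h\|_{L^q}$; test $h=\cos(2^jx)$.
\item The hypotheses allow such $\gamma$. When $a+b>0$, the condition $\theta\ge\beta/\alpha$ is equivalent to $\frac1\gamma\le\frac{\beta/\alpha}{p}+\frac{1-\beta/\alpha}{q}$, so $\gamma<\max(p,q)$ is permitted.
\end{itemize}
For example, $\beta=1$, $\alpha=2$, $p=1$, $q=\infty$, $\gamma=2$ forces $\theta=\frac12$, and the claim becomes $\|h'\|_{L^2}^2\le C\|h''\|_{L^1}\|h\|_{L^\infty}$. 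This is true by one integration by parts. Your splitting with the correct block bounds, however, yields only $\|h'\|_{L^2}\lesssim\|h''\|_{L^1}^{2/3}\|h\|_{L^\infty}^{1/3}$. In general, for $q>\gamma\ge p$ your scheme produces the exponent $\beta/(\beta+b)$, which is strictly larger than $\theta$, so it is not the stated inequality.

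\textbf{Missing ingredient.} What you need is Nirenberg's multiplicative endpoint estimate at $\theta=\beta/\alpha$, $\frac1\gamma=\frac{\beta/\alpha}{p}+\frac{1-\beta/\alpha}{q}$. It is proved by integration by parts, H\"older and induction on the order, not by frequency splitting. You then combine it with the $\gamma\ge\max(p,q)$ cases by H\"older interpolation. For $\beta=0$ a cheaper fix works: prove the $L^\infty$ case and use $\|h\|_{L^\gamma}\le\|h\|_{L^q}^{q/\gamma}\|h\|_{L^\infty}^{1-q/\gamma}$.

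\textbf{Step 3 misplaces the difficulty.} We have $a=\theta(a+b)$ and $b=(1-\theta)(a+b)$, with $a+b=\alpha-\frac1p+\frac1q>0$. This is automatic for integer $\alpha\ge1$ unless $\alpha=1$, $p=1$, $q=\infty$. Hence every $\theta\in(0,1)$ gives $a,b>0$, including $\theta=\beta/\alpha$ and $\gamma=\infty$, so your geometric sums never diverge there. The only degenerate cases are:
\begin{itemize}
\item $\theta=0$, which forces $\beta=0$, $\gamma=q$ and is trivial;
\item $a+b=0$, which reduces to $\|h\|_{L^\infty}\le\|h'\|_{L^1}$ for mean-zero $h$.
\end{itemize}
So the fallback on the references is needed exactly for $\gamma<\max(p,q)$, the case Step 2 wrongly claims to cover. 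The Poincar\'e bound in Step 3 is also unnecessary: if $\|\partial^\alpha h\|_{L^p}\lesssim\|h\|_{L^q}$, then directly $\|\partial^\alpha h\|_{L^p}\lesssim\|\partial^\alpha h\|_{L^p}^{\theta}\|h\|_{L^q}^{1-\theta}$.
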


% Let$X_{0}$, $X$, $X_1$ be Banach spaces with $X_{0}\subset X \subset X_{1}$ whose embeddings are injective and continuous. Let also $X_0$ and $X_1$ are reflexive, and the embedding $X_0 \rightarrow X_1$  is compact. For any $T>0$ and $\alpha_0$, $\alpha_1>1$, we introduce the function space 
% \[
% 	\mathcal{Y} = \mathcal{Y}(0,T;\alpha_{0},\alpha_{1};X_{0},X_{1})\coloneqq 	\{v \in L^{\alpha_{0}}([0,T];X_{0}) \mid v^\prime = \frac{dv}{dt} \in L^{\alpha_{1}}([0,T];X_{1})\}
% 	\]
% endowed with the norm;
% \[
% \norm{v}_{\mathcal{Y}} \coloneqq \norm{v}_{L^{\alpha_{0}}([0,T];X_{0})} + \norm{v'}_{L^{\alpha_{1}}([0,T];X_{1})}
% \]
% Then $\mathcal{Y}$ is Banach space and the following holds~\cite{Temam_NS_1984}.
% \begin{theorem}[\cite{Temam_NS_1984}, Theorem 2.1]\label{bochner_cpt_embedding}
% The embedding $\mathcal{Y} \to L^{\alpha_{0}}([0,T];X)$ is compact.
% \end{theorem}

% For any $0\leq m_1 \leq m_2$ let us define the space
% \[
% Z \coloneqq 	\{u \in L^{2}([0,T];\dot{H}^{m_{2}}) \mid u' = \frac{du}{dt} \in L^{2}([0,T];\dot{H}^{m_{1}})\} 
% \]
% endowed with the Hilbert norm
% \[
% \norm{u}_{Z}^{2} \coloneqq \norm{u}_{L^{2}([0,T];\dot{H}^{m_{2}})}^{2}+ \norm{u'}_{L^{2}([0,T];\dot{H}^{m_{1}})}^{2}.
% \]
% Then the spaced $Z$ becomes a Hilbert space and satisfies the following property.
% \begin{theorem}[\cite{Boritchev_Kuksin_2021_onedimensional}, Theorem 11.2.6]\label{Thm_Bochner_XTHm_embedding}
% The space $Z$ is embedded in $C([0,T];\dot{H}^{(m_{1}+m_{2})/2})$
% \end{theorem}

The following two propositions have been shown in  Boritchev \& Kuksin~\cite{Boritchev_Kuksin_2021_onedimensional}, which are used in the proof of Theorem~\ref{Stochastic_Hm_estimate} and (\ref{support}).

\begin{prop}[\cite{Boritchev_Kuksin_2021_onedimensional}; Theorem 11.1.4]\label{Hm_as_eq}
Let $m_{1} \geqq m_{2}$ and $\{\xi_{N}\}_{N \in \N}$ be a sequence of random variables in $\dot{H}^{m_1}$.
Suppose that there exist $K>0$ and $1\leq \alpha < \infty$ such that $E[\norm{\xi_{N}}^{\alpha}_{\dot{H}^{m_1}}]\leq K$ is satisfied  for any $N \in \N$.
Assume also that $\xi_N$ converges to a random variable $\xi$ a.s in the space of $\dot{H}^{m_2}$.
Then $\xi = \xi^\prime$ a.s., where $\xi^\prime$ is a random variable in $\dot{H}^{m_1}$ with $\E[\norm{\xi^\prime}_{\dot{H}^{m_1}}^{\alpha}] \leq K$.
\end{prop}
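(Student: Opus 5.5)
The plan is to combine lower semicontinuity of the $\dot{H}^{m_1}$-norm with respect to convergence in the weaker space $\dot{H}^{m_2}$ with Fatou's lemma. The Fourier-coefficient description of the norms is what makes this work. For $M\in\N$ and $f\in\dot{H}^{m_2}$, I introduce the truncated functional
\[
\Phi_M(f)\coloneqq \sum_{0<\abs{k}\le M}\abs{k}^{2m_1}f_k^2 ,
\]
and extend the norm by $\Phi(f)\coloneqq\sup_{M}\Phi_M(f)\in[0,\infty]$. Each coefficient map $f\mapsto f_k=\langle f,e_k\rangle_H$ is continuous on $\dot{H}^{m_2}$, and $\Phi_M$ involves only finitely many of them, so $\Phi_M$ is continuous on $\dot{H}^{m_2}$. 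Consequently $\Phi$ is lower semicontinuous and Borel measurable on $\dot{H}^{m_2}$. Moreover $\Phi(f)=\norm{f}_{\dot{H}^{m_1}}^2$ whenever $f\in\dot{H}^{m_1}$, and $\Phi(f)=\infty$ otherwise. In particular $\dot{H}^{m_1}=\{\Phi<\infty\}$ is a Borel subset of $\dot{H}^{m_2}$, and $\Phi(\xi)$ is a random variable.

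Next I fix $\eta$ in the full-measure event on which $\xi_N\to\xi$ in $\dot{H}^{m_2}$. For each fixed $M$, continuity of $\Phi_M$ gives
\[
\Phi_M(\xi)=\lim_{N\to\infty}\Phi_M(\xi_N)\le\liminf_{N\to\infty}\norm{\xi_N}_{\dot{H}^{m_1}}^2 .
\]
Letting $M\to\infty$ by monotone convergence of the partial sums, I obtain
\[
\Phi(\xi)\le\liminf_{N}\norm{\xi_N}_{\dot{H}^{m_1}}^2 \qquad \text{a.s.}
\]
Raising to the power $\alpha/2$ and applying Fatou's lemma yields
\[
\E[\Phi(\xi)^{\alpha/2}]\le\liminf_N\E[\norm{\xi_N}_{\dot{H}^{m_1}}^\alpha]\le K .
\]

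It follows that $\Phi(\xi)<\infty$ almost surely, i.e.\ $\xi\in\dot{H}^{m_1}$ a.s. I then define $\xi'\coloneqq\xi$ on the event $\{\Phi(\xi)<\infty\}$ and $\xi'\coloneqq 0$ on its complement. By the measurability of $\{\Phi<\infty\}$ established above, $\xi'$ is an $\dot{H}^{m_1}$-valued random variable, it satisfies $\xi'=\xi$ a.s., and
\[
\E[\norm{\xi'}_{\dot{H}^{m_1}}^\alpha]=\E[\Phi(\xi)^{\alpha/2}]\le K .
\]
For Borel measurability of $\xi'$ as a map into $\dot{H}^{m_1}$ I will use that the Borel $\sigma$-algebra of the separable Hilbert space $\dot{H}^{m_1}$ is generated by the coefficient maps. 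These maps are also measurable on $\dot{H}^{m_2}$, so measurability transfers.

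The only delicate point is the measure-theoretic bookkeeping: the lower semicontinuity of $\Phi$, and the measurability of $\xi'$ as an $\dot{H}^{m_1}$-valued variable. Both are handled by working coefficientwise through the finite truncations $\Phi_M$.
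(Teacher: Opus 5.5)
Your proof is correct and complete. The paper gives no proof of this proposition; it only quotes Theorem 11.1.4 of Boritchev--Kuksin, so your argument serves as a self-contained proof of the cited fact.

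The three steps all hold:
\begin{itemize}
\item The finite truncations $\Phi_M$ make the extended $\dot{H}^{m_1}$-norm lower semicontinuous on $\dot{H}^{m_2}$.
\item Fatou's lemma then gives $\E[\Phi(\xi)^{\alpha/2}]\le K$.
\item The coefficientwise description of the Borel $\sigma$-algebra makes $\xi'$ a genuine $\dot{H}^{m_1}$-valued random variable.
\end{itemize}

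Two small observations. The argument never uses $\alpha\ge 1$, so it works for any $\alpha>0$. It also makes explicit the measurability bookkeeping that the citation leaves implicit.
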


\begin{prop}[\cite{Boritchev_Kuksin_2021_onedimensional}; Theorem 11.1.5]\label{Hm_as_eq_5} 
Let $m_1 \ge m_2$ and $\{\mu_n\}_{n}$ be a sequence of measures in $\mathcal{P}(\dot{H}^{m_1})$ such that $\langle \|u\|^p_{\dot{H}^{m_1}}, \mu_n \rangle \le K$ for all $n$ and some $1 \le p \le +\infty$. 
Assume also $\mu_n \to \mu$ in $\mathcal{P}(\dot{H}^{m_2}).$ 
Then, $\mu(\dot{H}^{m_1})=1,$ so $\mu$ may be regarded as a measure on $\dot{H}^{m_1},$ and $\langle \|u\|^p_{\dot{H}^{m_1}}, \mu \rangle \le K.$ 
\end{prop}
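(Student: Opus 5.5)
This statement is quoted from Boritchev--Kuksin (Theorem 11.1.5) and is the measure-level analogue of Proposition~\ref{Hm_as_eq}. The plan is to reduce it to lower semicontinuity of $u\mapsto\|u\|_{\dot{H}^{m_1}}^p$ on $\dot{H}^{m_2}$ and then pass to the limit through truncations and the portmanteau theorem.

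\textbf{Lower semicontinuity.} Extend $\|\cdot\|_{\dot{H}^{m_1}}$ to all of $\dot{H}^{m_2}$ by
\[
\|u\|_{\dot{H}^{m_1}}^2=\sup_{M}\sum_{0<|k|\le M}|k|^{2m_1}u_k^2\in[0,\infty].
\]
Each finite sum is continuous on $\dot{H}^{m_2}$, since Fourier coefficients are continuous linear functionals there. A supremum of continuous functions is lower semicontinuous, so $\Phi(u)=\|u\|_{\dot{H}^{m_1}}^p$ is a lower semicontinuous $[0,\infty]$-valued Borel function on $\dot{H}^{m_2}$. In particular $\dot{H}^{m_1}=\{\Phi<\infty\}$ is a Borel subset of $\dot{H}^{m_2}$.

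\textbf{Passing to the limit.} Fix $L>0$ and $M\in\N$, and set $\Phi_{M,L}(u)=\min\{L,(\sum_{0<|k|\le M}|k|^{2m_1}u_k^2)^{p/2}\}$. This function is bounded and continuous on $\dot{H}^{m_2}$, and $\Phi_{M,L}\le\Phi$ on $\dot{H}^{m_1}$. Weak convergence $\mu_n\to\mu$ in $\mathcal{P}(\dot{H}^{m_2})$ therefore gives
\[
\langle\Phi_{M,L},\mu\rangle=\lim_n\langle\Phi_{M,L},\mu_n\rangle\le K.
\]
Let $M\to\infty$ and then $L\to\infty$; monotone convergence yields $\langle\Phi,\mu\rangle\le K$. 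Since $\Phi=\infty$ off $\dot{H}^{m_1}$, this forces $\mu(\dot{H}^{m_2}\setminus\dot{H}^{m_1})=0$, so $\mu(\dot{H}^{m_1})=1$ and $\langle\|u\|^p_{\dot{H}^{m_1}},\mu\rangle\le K$. The restriction of $\mu$ to $\dot{H}^{m_1}$ is a Borel measure there because the inclusion $\dot{H}^{m_1}\hookrightarrow\dot{H}^{m_2}$ is continuous and injective. By Lusin--Souslin, Borel sets of $\dot{H}^{m_1}$ are then Borel in $\dot{H}^{m_2}$, so both $\sigma$-algebras agree on $\dot{H}^{m_1}$.

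\textbf{The case $p=\infty$.} Here the hypothesis means $\|u\|_{\dot{H}^{m_1}}\le K$ for $\mu_n$-a.e.\ $u$. The ball $\{\Phi\le K\}$ is closed in $\dot{H}^{m_2}$ by lower semicontinuity. The portmanteau theorem then gives $\mu(\{\Phi\le K\})\ge\limsup_n\mu_n(\{\Phi\le K\})=1$.

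\textbf{Main obstacle.} The only delicate step is measure-theoretic: identifying $\mu$ as a measure on $\dot{H}^{m_1}$, i.e.\ the Borel-structure compatibility described above. The analytic part follows at once from lower semicontinuity.
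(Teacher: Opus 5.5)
Your proof is correct, and it takes a different route from the one implicit in the paper. The paper gives no argument of its own. It quotes the statement from Boritchev--Kuksin, and the paper itself calls it ``Corollary 11.1.5'', which indicates that it is deduced there from the random-variable version, Proposition~\ref{Hm_as_eq}. The standard way to pass from that version to this one is Skorokhod's representation theorem. One realizes $\mu_n\to\mu$ in $\mathcal{P}(\dot{H}^{m_2})$ as almost sure convergence $\xi_n\to\xi$ in $\dot{H}^{m_2}$ of random variables with laws $\mu_n$ and $\mu$. One then applies Fatou's lemma together with $\|\xi\|_{\dot{H}^{m_1}}\le\liminf_n\|\xi_n\|_{\dot{H}^{m_1}}$.

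You instead stay at the level of measures. You prove the portmanteau inequality $\langle\Phi,\mu\rangle\le\liminf_n\langle\Phi,\mu_n\rangle$ for the lower semicontinuous $\Phi$ directly, approximating $\Phi$ from below by the bounded continuous truncations $\Phi_{M,L}$ and then using monotone convergence.

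The two routes buy different things. The Skorokhod route makes the statement a two-line corollary of an already available result, at the price of a nontrivial representation theorem. Yours is elementary and self-contained, and it shows that only lower semicontinuity matters, so any $p>0$ would do. It also handles $p=\infty$ cleanly through the closed-set form of portmanteau. Finally, it makes explicit the identification of $\mu$ with a Borel measure on $\dot{H}^{m_1}$, which is usually left implicit. Lusin--Souslin works for that step. A lighter alternative: $\mathcal{B}(\dot{H}^{m_1})$ is generated by the coordinate maps $u\mapsto u_k$, these are continuous on $\dot{H}^{m_2}$, and $\dot{H}^{m_1}\in\mathcal{B}(\dot{H}^{m_2})$; these facts give the compatibility of the $\sigma$-algebras directly.
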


%\begin{prop}[\cite{Simon1974}; hyper-contractibility] \label{nelson}
%Let $\xi=\{\xi_n\}_n$ be a sequence of standard independent and identically %distributed real Gaussian random variables. Let $k\in \N$ and let $\%{P_j(\xi)\}_{j\in \N}$ be a sequence of polynomials of degree at most k. Then for $p \ge 2$, the follwing holds.
%$$\E [|\sum_{j\in \N} P_j(\xi)|^p]^{\frac{1}{p}} \le (p-1)^{\frac{k}{2}} %\E[|\sum_{j\in \N} P_j(\xi)|^2]^{\frac12}.$$ 
%\end{prop}
\section*{Acknowledgment}
This research was supported by JSPS KAKENHI 19KK0066 and 23H00086. 
The authors  are very grateful to Yuta Tsuji, from whom we learned a lot through our discussions.
\bibliographystyle{plain}
\bibliography{myrefs}
\end{document}